\documentclass[10pt]{article}

\usepackage[utf8]{inputenc}
\usepackage[T1]{fontenc}
\usepackage{scalefnt}
\usepackage{natbib}
\usepackage{amsfonts}
\usepackage{mathtools}
\usepackage{amssymb}
\usepackage{amsmath}
\usepackage{amsthm}
\usepackage{caption}
\usepackage{subcaption}
\usepackage{xcolor}
\definecolor{mydarkblue}{rgb}{0,0.08,0.85}
\usepackage[colorlinks=true,bookmarks=true,linkcolor=mydarkblue,urlcolor=mydarkblue,citecolor=mydarkblue,breaklinks=true]{hyperref}
\usepackage{breakcites}
\usepackage{enumitem}
\usepackage{cases}
\usepackage{commath}
\usepackage{algorithm}
\usepackage{algpseudocode}
\usepackage{interval}
\usepackage{xfrac}
\usepackage{xspace}
\intervalconfig{soft open fences} %
\usepackage{tikz}
\usepackage{tikz-cd}
\usetikzlibrary{matrix}
\usepackage{multirow}

\newtheorem{theorem}{Theorem}[section]
\newtheorem{proposition}[theorem]{Proposition}
\newtheorem{definition}[theorem]{Definition}
\newtheorem{lemma}[theorem]{Lemma}
\newtheorem{corollary}[theorem]{Corollary}
\newtheorem{remark}[theorem]{Remark}

\newcommand{\ones}{\mathbf 1}
\newcommand{\zeros}{0}

\newcommand{\reals}{{\mathbb R}}

\DeclareMathOperator*{\argmin}{arg\,min}

\newcommand{\retr}{\mathrm{R}}
\newcommand{\grad}{\nabla}
\newcommand{\hess}{\nabla^2}

\newcommand{\tangent}{\mathrm{T}}
\newcommand{\normal}{\mathrm{N}}
\newcommand{\ptransport}[2]{\Gamma_{#1}^{#2}}

\newcommand{\Exp}{\mathrm{Exp}}
\newcommand{\Log}{\mathrm{Log}}

\newcommand{\frob}{\mathrm{F}}

\DeclareMathOperator{\sign}{sign}
\DeclareMathOperator{\diag}{diag}

\DeclareMathOperator{\dist}{dist}

\DeclareMathOperator{\vecspan}{span}

\DeclarePairedDelimiterX{\inner}[2]{\langle}{\rangle}{#1, #2}

\newcommand{\D}{\mathrm{D}}

\newcommand{\quotient}[2]{#1/{#2}}

\newcommand{\smooth}[1]{{\mathrm{C}^{#1}}}

\newcommand{\sequence}[1]{\{#1\}}

\makeatletter
\newcommand{\TODOF}[1]{\@bsphack\@esphack}
\makeatother

\newcommand\restrold[2]{{%
    \left.\kern-\nulldelimiterspace %
      #1 %
    \right|_{#2} %
  }}

\makeatletter
\newcommand{\normalscaling}{\bBigg@{0}}

\newcommand{\abitbig}{\bBigg@{1}}
\makeatother

\newcommand\restr[3]{{%
    #3.\kern-\nulldelimiterspace %
      #1 %
    #3|_{#2} %
  }}

\newcommand{\aref}[1]{\hyperref[#1]{A\ref{#1}}}
\newtheorem{assumption}{A\ignorespaces} %
\newcommand{\cref}[1]{\hyperref[#1]{C\ref{#1}}}
\newtheorem{condition}{C\ignorespaces} %
\setcounter{condition}{-1}

\newcommand{\levenmarq}{Levenberg--Marquardt}
\newcommand{\lanczos}{Lanczos}

\newcommand{\cauchyschwarz}{Cauchy--Schwarz}
\newcommand{\loja}{\L ojasiewicz}
\newcommand{\polyakloja}{Polyak--\loja}
\newcommand{\pl}{\ensuremath{\text{P\L}}}
\newcommand{\morsebott}{Morse--Bott}

\newcommand{\mb}{\ensuremath{\text{MB}}}

\newcommand{\cg}{CG}
\newcommand{\tcg}{t\cg{}}
\newcommand{\minres}{MINRES}

\usepackage{scalerel,stackengine}
\newcommand\pig[1]{\scalerel*[5pt]{\big#1}{%
    \ensurestackMath{\addstackgap[1.5pt]{\big#1}}}}

\newcommand{\polcoef}[2]{\vartheta_{#1}(#2)}

\newcommand{\pols}[1]{\mathcal{P}_{\leq#1}}

\newcommand{\pol}{p}
\newcommand{\qol}{q}
\newcommand{\rhopol}{\pi}
\newcommand{\trhopol}{\tilde \pi}
\newcommand{\ppol}{\varphi}
\newcommand{\qpol}{\varsigma}
\newcommand{\tqpol}{\tilde \varsigma}
\newcommand{\prexipol}{\zeta}
\newcommand{\pxplambda}{\lambda}
\newcommand{\xipol}{\xi}
\newcommand{\cparam}{c}

\newcommand{\coef}{\sigma}

\newcommand{\xidiv}{\delta}
\newcommand{\qdiv}{\tau}

\newcommand{\sdim}{d}
\newcommand{\mdim}{{\tilde d}}
\newcommand{\grade}{\ell}

\newcommand{\mat}{A}
\newcommand{\tmat}{\tilde A}
\newcommand{\weight}{b}
\newcommand{\tweight}{\tilde b}
\newcommand{\krylovmat}{K}
\newcommand{\krylovspace}{\mathcal{K}}

\newcommand{\cgvec}{v}
\newcommand{\cgres}{r}
\newcommand{\cgdir}{u}

\newcommand{\retrdistboundconst}{c_r}

\newcommand{\tcgkappa}{\kappa}
\newcommand{\power}{\theta}

\newcommand{\linearmap}{H}
\newcommand{\rtrmodel}{m}
\newcommand{\rtrrho}{\rho}

\newcommand{\rtrsufficientdecrease}{c_0}
\newcommand{\strongvsconst}{c_1}
\newcommand{\modelconst}{c_2}

\newcommand{\hesslipconst}{L_H}
\newcommand{\hessliplikeconst}{L_H'}
\newcommand{\hessapproxconst}{\beta_H}

\newcommand{\manifold}{\mathcal{M}}

\newcommand{\mfc}{f}
\newcommand{\sfc}{g}

\newcommand{\optimalset}{\mathcal{S}}
\newcommand{\optpoint}{\bar x}

\newcommand{\mfcopt}{f(\bar x)}

\newcommand{\plconstant}{\mu}

\newcommand{\muflat}{\mu^\flat}

\newcommand{\lamsharp}{\lambda^\sharp}

\newcommand{\equivrel}{\sim}

\usepackage{sectsty}
\makeatletter\def\@seccntformat#1{\protect\makebox[0pt][r]{\csname the#1\endcsname\hspace{12pt}}}\makeatother

\usepackage[verbose=true,letterpaper]{geometry}
\AtBeginDocument{
  \newgeometry{
    textheight=9in,
    textwidth=6in,
    top=1in,
    headheight=12pt,
    headsep=25pt,
    footskip=30pt
  }
}

\title{Fast convergence of trust-regions for non-isolated minima\\via analysis of CG on indefinite matrices}

\author{
  Quentin Rebjock and Nicolas Boumal\thanks{Correspondence: quentin.rebjock@epfl.ch. Ecole Polytechnique F\'ed\'erale de Lausanne (EPFL), Insitute of Mathematics. This work was supported by the Swiss State Secretariat for Education, Research and Innovation (SERI) under contract number MB22.00027.}
}

\date{\today}

\begin{document}

\maketitle

\begin{abstract}
  Trust-region methods (TR) can converge quadratically to minima where the
  Hessian is positive definite.
  However, if the minima are not isolated, then the Hessian there cannot be
  positive definite.
  The weaker \polyakloja{} (\pl{}) condition is compatible with non-isolated
  minima, and it is enough for many algorithms to preserve good local behavior.
  Yet, TR with an \emph{exact} subproblem solver lacks even basic features such
  as a capture theorem under \pl{}.

  In practice, a popular \emph{inexact} subproblem solver is the truncated
  conjugate gradient method (\tcg{}).
  Empirically, TR-\tcg{} exhibits superlinear convergence under \pl{}.
  We confirm this theoretically.

  The main mathematical obstacle is that, under \pl{}, at points arbitrarily
  close to minima, the Hessian has vanishingly small, possibly negative
  eigenvalues.
  Thus, \tcg{} is applied to ill-conditioned, indefinite systems.
  Yet, the core theory underlying \tcg{} is that of CG, which assumes a positive
  definite operator.
  Accordingly, we develop new tools to analyze the dynamics of CG in the
  presence of small eigenvalues of any sign, for the regime of interest to
  TR-\tcg{}.
\end{abstract}

\section{Introduction}\label{sec:intro}

We consider unconstrained optimization problems of the form
\begin{align*}
  \min_{x \in \manifold} \mfc(x),
\end{align*}
where $\manifold$ is a Riemannian manifold\footnote{The contributions are
  relevant for $\manifold = \mathbb{R}^n$ too. We treat the more general
  manifold case as it involves only mild overhead in notation, summarized in
  Table~\ref{table:euclidean-case}.}
and $\mfc \colon \manifold \to \reals$ is twice continuously differentiable
($\smooth{2}$).
The tangent spaces $\tangent_x \manifold$ are equipped with inner products
$\inner{\cdot}{\cdot}_x$ and associated norms $\|\cdot\|_x$ (and we omit the
subscript $x$ for brevity).

Near a local minimum $\optpoint$, classical results guarantee favorable local
convergence properties for standard algorithms when the Hessian $\hess
\mfc(\optpoint)$ is positive definite.
For example, gradient descent enjoys linear rates, while
regularized variants of Newton's method such as %
adaptive regularization with cubics (ARC) admit superlinear rates.
Those algorithms preserve their fast convergence rates even if we relax the
assumption at $\optpoint$ to assume instead the weaker \polyakloja{} (\pl{})
condition around $\optpoint$
(this is all well known: see Section~\ref{subsec:related-work} for a
literature review).

\begin{definition}\label{def:pl}
  Let $\optpoint$ be a local minimum of $\mfc$.
  We say $\mfc$ satisfies the \emph{\polyakloja{}} condition with constant
  $\plconstant > 0$ (also denoted \emph{$\plconstant$-\pl{}}) around $\optpoint$ if
  \begin{align}
    \mfc(x) - \mfcopt \leq \frac{1}{2\plconstant}\|\grad \mfc(x)\|^2
    \tag{\pl{}}
    \label{eq:pl}
  \end{align}
  for all $x$ in some neighborhood of $\optpoint$, where $\grad\mfc$ is the gradient of $\mfc$.
\end{definition}

The situation is different for trust-region methods (TR).
They are also known to enjoy superlinear convergence
near local minima with a positive definite Hessian.
However, in the literature there exist no fast (local) rates of convergence for
TR algorithms assuming only the \pl{} condition.

This is problematic if $\mfc$ has non-isolated local minima,
which is inevitable in overparameterized problems or problems with continuous symmetries.
Indeed, the Hessian cannot be positive definite at non-isolated local minima,
whereas \pl{} can (and often does) hold
there---see \citep{luo1993error} and \cite[\S4.2]{nesterov2006cubic}, or \citep{liu2022loss} in the context of machine learning.

This gap in the literature is all the more surprising considering that
\emph{(i)} empirically, practical implementations of TR behave just fine near
non-isolated minima with the \pl{} condition, and \emph{(ii)} the theoretical
guarantees for TR usually parallel those of ARC, which is known to converge
quadratically under \pl{}
\citep{nesterov2006cubic,cartis2011adaptive,yue2019quadratic,rebjock2023fast}.

Both TR and ARC involve a subproblem at each iteration.
For ARC, fast convergence rates hold under \pl{} for various subproblem solvers,
both exact and inexact.
Surprisingly, for TR, solving the subproblems \emph{exactly} can break basic capture properties under \pl{}:
we describe this in Section~\ref{subsec:intuition}.

Practical implementations of TR seldom rely on an exact subproblem solver though.
Instead, a popular alternative is the truncated conjugate gradient (\tcg{})
method (see Algorithm~\ref{alg:tcg}).
This is an \emph{inexact} subproblem solver based on the conjugate gradient
(\cg{}) algorithm.
The core of \tcg{} is the same as \cg{} but it
terminates early if it detects negative curvature,
or it exceeds the trust-region radius,
or it produces a small enough residual.

Experimentally, we observe that TR with \tcg{} enjoys favorable convergence
properties around minima where \pl{} holds (Appendix~\ref{sec:example} provides
a simple numerical example).
The fact that this good behavior relies on
a specific subproblem solver (which was not the case for ARC)
may partly explain the literature gap---and highlights yet another
remarkable property of Krylov methods.

In this paper, we explain that behavior theoretically.
Specifically, we secure superlinear convergence (but not
quadratic, see Remark~\ref{rmk:super-linear-but-not-quadratic}) for
TR with \tcg{} under \pl{}.

In our main theorem below,
\aref{assu:hess-lip}, \aref{assu:hess-lip-like}
and~\aref{assu:hess-approx} are three weak assumptions that typically hold, and
that we describe in Section~\ref{sec:application-rtr}.
For example, in the Euclidean case, they hold if the algorithm has access to the
true Hessian and the latter is locally Lipschitz continuous.
The proof is stated at the end of Section~\ref{sec:application-rtr}.

\begin{theorem}\label{th:rtr-main-theorem}
  Suppose~\aref{assu:hess-lip}, \aref{assu:hess-lip-like},
  \aref{assu:hess-approx} and~\eqref{eq:pl} hold around a local minimum
  $\optpoint$.
  We run TR with the \tcg{} subproblem solver (Algorithm~\ref{alg:tcg}) with
  parameters $\tcgkappa > 0$ and $\power \in \interval[open]{0}{1}$.
  Given any neighborhood $\mathcal{U}$ of $\optpoint$, there exists a
  neighborhood $\mathcal{V}$ of $\optpoint$ such that if an iterate enters
  $\mathcal{V}$ then the iterates converge superlinearly with order $1 +
  \power$ to some local minimum of $\mfc$ that is in $\mathcal{U}$.
\end{theorem}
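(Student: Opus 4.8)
The plan is to combine three ingredients, in this order: the geometric consequences of \eqref{eq:pl} near $\optpoint$; the paper's analysis of \cg{} on indefinite and ill-conditioned operators; and the standard trust-region mechanism.

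\emph{Step 1 (geometry from \pl{}).} I record what \eqref{eq:pl} yields around $\optpoint$ for a $\smooth{2}$ function, as assembled in the earlier sections: writing $\optimalset$ for the local minimizers of $\mfc$ near $\optpoint$, there is an error bound / quadratic growth $\mfc(x) - \mfcopt \asymp \dist(x,\optimalset)^2 \asymp \|\grad\mfc(x)\|^2$; and, because $\hess\mfc(\optpoint) \succeq 0$ and \aref{assu:hess-lip} makes $\hess\mfc$ locally Lipschitz, for $x$ near $\optpoint$ the spectrum of $\hess\mfc(x)$ splits into $r \eqbydef \rank\hess\mfc(\optpoint)$ eigenvalues bounded below by a constant $\muflat > 0$ and the remaining ones of magnitude $O(\dist(x,\optpoint))$, with the $r$-dimensional ``large'' eigenspace depending continuously on $x$ (to within $O(\dist)$). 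The single fact that is genuinely used is that the gradient \emph{avoids the small-eigenvalue subspace}: expanding around a nearest minimizer $\optpoint^\star \in \optimalset$ of $x$ and using $\grad\mfc(\optpoint^\star) = 0$ gives $\grad\mfc(x) = \hess\mfc(\optpoint^\star)[x - \optpoint^\star] + O(\dist(x,\optimalset)^2)$; the leading term lies in the range of $\hess\mfc(\optpoint^\star)$, hence in its large-eigenvalue subspace, which matches that of $\hess\mfc(x)$ up to $O(\dist)$, so the component of $\grad\mfc(x)$ in the small-eigenvalue subspace of $\hess\mfc(x)$ --- and, via \aref{assu:hess-approx}, of the operator $\linearmap_k$ used in the model --- is $O(\dist^2) = O(\|\grad\mfc(x)\|^2)$.

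\emph{Step 2 (the \tcg{} step is good).} Let $\rtrmodel_k(s) = \inner{\grad\mfc(x_k)}{s} + \tfrac12\inner{s}{\linearmap_k[s]}$ and apply \tcg{} (Algorithm~\ref{alg:tcg}) to it. Once $x_k$ is close enough to $\optpoint$, Step 1 places us exactly in the regime analyzed earlier: $\linearmap_k$ is positive definite with $O(1)$ condition number on an $r$-dimensional subspace, slightly indefinite and vanishingly small on its complement, and the right-hand side $-\grad\mfc(x_k)$ meets that complement only to order $O(\|\grad\mfc(x_k)\|^{1+\power})$ (here $\power < 1$, so $\|\grad\mfc(x_k)\|^2 \le \|\grad\mfc(x_k)\|^{1+\power}$). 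The paper's \cg{} analysis then shows that, up to that small error, \tcg{} tracks \cg{} on the well-conditioned positive-definite block and reaches its residual stopping test --- $\|\linearmap_k[s_k] + \grad\mfc(x_k)\| \le \min(\tcgkappa, \|\grad\mfc(x_k)\|^{\power})\,\|\grad\mfc(x_k)\|$ --- after a number of inner iterations bounded independently of the small eigenvalues, hence before negative curvature stops it; the trust-region boundary is eventually inactive because the unconstrained \tcg{} iterate at the residual test has norm $O(\|\grad\mfc(x_k)\|) \to 0$ while the radius stays bounded below (Step 3). This delivers, for $x_k$ near $\optpoint$: the step bound $\|s_k\| \le \newtonconstant\|\grad\mfc(x_k)\|$; the residual bound $\|\linearmap_k[s_k] + \grad\mfc(x_k)\| \le \tcgkappa'\,\|\grad\mfc(x_k)\|^{1+\power}$; and, the constraint being inactive and $\|\linearmap_k\|$ bounded, the Cauchy-type model decrease $\rtrmodel_k(0) - \rtrmodel_k(s_k) \ge \rtrsufficientdecrease\,\|\grad\mfc(x_k)\|^2$. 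A Taylor expansion of $\mfc \circ \retr_{x_k}$, controlled by \aref{assu:hess-lip}, \aref{assu:hess-lip-like} and \aref{assu:hess-approx}, bounds $\mfc(\retr_{x_k}(s_k)) - \mfc(x_k) - (\rtrmodel_k(s_k) - \rtrmodel_k(0))$ by a term of strictly lower order than $\|\grad\mfc(x_k)\|^2$, so $\rtrrho_k \to 1$ as $x_k \to \optpoint$: the step is accepted and the radius is not decreased (this also closes the small bootstrap with the previous sentence, by the standard trust-region bookkeeping).

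\emph{Step 3 (capture, convergence, rate), and the main obstacle.} Every accepted step decreases $\mfc$ by at least a constant times $\rtrmodel_k(0) - \rtrmodel_k(s_k) \ge \rtrsufficientdecrease\|\grad\mfc(x_k)\|^2 \ge 2\rtrsufficientdecrease\plconstant(\mfc(x_k) - \mfcopt)$, using \eqref{eq:pl}; hence $\mfc(x_k) - \mfcopt$, and with it $\|\grad\mfc(x_k)\|^2$ and $\|s_k\|^2$, decays geometrically, so $\sum_k \|s_k\| < \infty$. Shrinking $\mathcal{V}$ so that the total path length issuing from any point of $\mathcal{V}$ is below $\dist(\optpoint, \manifold \setminus \mathcal{U})$ keeps every iterate in $\mathcal{U}$ and forces $x_k \to x_\infty \in \mathcal{U}$ with $\grad\mfc(x_\infty) = 0$; then \eqref{eq:pl} gives $\mfc(x_\infty) = \mfcopt$, so $x_\infty$ is a local minimum in $\mathcal{U}$. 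For the rate, expanding $\grad\mfc \circ \retr_{x_k}$ and using \aref{assu:hess-lip}, \aref{assu:hess-approx} and the residual bound,
\[
  \|\grad\mfc(x_{k+1})\| \le \|\linearmap_k[s_k] + \grad\mfc(x_k)\| + O(\|s_k\|^2) \le C\,\|\grad\mfc(x_k)\|^{1+\power},
\]
because $\|s_k\|^2 = O(\|\grad\mfc(x_k)\|^2)$ and $\power < 1$. Since $\dist(x_k, x_\infty) \le \sum_{j \ge k}\|s_j\| \le \newtonconstant\sum_{j \ge k}\|\grad\mfc(x_j)\|$ and a superlinearly decaying sum is dominated by its first term, $\dist(x_k, x_\infty) \le C'\|\grad\mfc(x_k)\|$ (and likewise at $k+1$); combined with the display this gives $\dist(x_{k+1}, x_\infty) \le C''\dist(x_k, x_\infty)^{1+\power}$, i.e.\ convergence of order $1 + \power$ (not $2$, consistent with Remark~\ref{rmk:super-linear-but-not-quadratic}). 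The crux of the whole argument is the claim in Step 2 that \tcg{}, applied to the indefinite and ill-conditioned $\linearmap_k$, attains its residual tolerance instead of halting early on negative curvature --- exactly what the paper's new \cg{}-on-indefinite-matrices analysis is built to provide. What remains is to verify its hypothesis (the right-hand side $-\grad\mfc(x_k)$ is aligned with the well-conditioned block up to $O(\|\grad\mfc(x_k)\|^{1+\power})$, which is the geometric content of \eqref{eq:pl} from Step 1) and to interlock it with the radius staying bounded below, which itself relies on every nearby step being accepted; everything else is Taylor expansion and the standard trust-region / \pl{} bookkeeping.
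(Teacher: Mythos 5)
Your proposal is correct and follows essentially the same route as the paper's proof. You correctly decompose the argument into (i) geometric consequences of \eqref{eq:pl} (the gradient aligning with the well-conditioned block, which is Lemma~\ref{lemma:grad-image-hess}), (ii) the assertion that tCG reaches the residual stopping test with $\|s_k\| = O(\|\grad\mfc(x_k)\|)$ and $\|\grad\rtrmodel_k(s_k)\| = O(\|\grad\mfc(x_k)\|^{1+\power})$ --- exactly Propositions~\ref{prop:rtr-tcg-strong-vs} and~\ref{prop:rtr-tcg-small-model}, both ultimately resting on Lemma~\ref{lemma:cg-iterate-existence}, the keystone you explicitly defer to the paper's CG analysis --- and (iii) capture plus a one-step gradient contraction of order $1+\power$, matching Propositions~\ref{prop:rtr-tcg-capture} and~\ref{prop:conv-rate}.

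The only place where you are lighter than the paper is the bootstrap you flag parenthetically: the Cauchy decrease $\geq \rtrsufficientdecrease\|\grad\mfc(x_k)\|^2$ used in Step~3 presupposes $\Delta_k$ bounded away from zero, which in turn needs eventual step acceptance ($\rtrrho_k \to 1$), which is deduced \emph{before} knowing $\Delta_k$ is bounded below. The paper breaks this circle cleanly by establishing~\cref{cond:strong-vs} without any radius assumption (Lemma~\ref{lemma:cg-iterate-existence} plus Lemma~\ref{lemma:iterates-cg-grow}: the tCG output never exceeds the norm of the good CG iterate $\cgvec_n$, whatever $\Delta_k$ is), then invoking Lemma~\ref{lemma:ratios-converge} to get $\rtrrho_k \to 1$ from~\cref{cond:better-than-cauchy} and~\cref{cond:strong-vs} alone, and only then concluding that radii stabilize so that~\cref{cond:small-model} holds. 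Your sketch is compatible with this, but the order matters: $\|s_k\| \leq \strongvsconst\|\grad\mfc(x_k)\|$ comes first and unconditionally, not as a byproduct of the residual test being reached inside the trust region. With that ordering made explicit, the argument closes.
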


To establish this theorem, we show several intermediate results of independent
interest regarding Krylov methods.
Indeed, a major obstacle to analyzing the behavior of \tcg{} without assuming
local strong convexity is that the Hessian can have small negative eigenvalues
arbitrarily close to local minima, whereas the usual analyses of \cg{} break if
the Hessian is not positive definite.
Accordingly, we propose a new analysis of \cg{} that can handle ill-conditioned, indefinite
matrices in Sections~\ref{sec:reminders-cg} and~\ref{sec:indefinite-cg}.

\begin{table}[t]
  \centering
  \begin{tabular}{|c|c|c|c|c|c|c|c|c|c|c|}
    \hline
    & $\tangent_x \manifold$ & $\inner{u}{v}$ & $\|u\|$ & $\retr_x(s)$ & $\Exp_x(s)$ & $\Log_x(y)$ & $\dist(x, y)$\\\hline
    $\manifold = \reals^n$ & $\reals^n$ & $u^\top v$ & $\sqrt{u^\top u}$ & $x + s$ & $x + s$ & $y - x$ & $\|x - y\|$\\\hline
  \end{tabular}
  \caption{
    Simplifications in the case where $\manifold$ is a Euclidean space.
  }\label{table:euclidean-case}
\end{table}

\subsection{Sufficient conditions for superlinear local convergence}\label{subsec:C0C1C2}

TR methods generate a sequence of iterates $x_k \in \manifold$ together with a
sequence of trust-region radii $\Delta_k > 0$.
At iteration $k$, a subproblem solver chooses a
step $s_k$ in the tangent space $\tangent_{x_k}\manifold$.
Then, $x_{k+1}$ is set to be either $x_{k+1} = \retr_{x_k}(s_k)$ (accepted step)
or $x_{k+1} = x_k$ (rejected step), where $\retr$ is a \emph{retraction} on
$\manifold$.
(If $\manifold = \reals^n$, then $\tangent_{x}\manifold = \reals^n$ and
typically $\retr_{x}(s) = x+s$.)
The step $s_k$ is chosen to approximately minimize a quadratic model
$\rtrmodel_k \colon \tangent_{x_k} \manifold \to \reals$ under
the constraint $\|s_k\| \leq \Delta_k$, where
\begin{align*}
    \rtrmodel_k(s) = \mfc(x_k) + \inner{s}{\grad \mfc(x_k)} + \frac{1}{2}\inner{s}{\linearmap_k[s]}.
\end{align*}
Above, $\linearmap_k$ is a symmetric linear map (often set to be $\hess f(x_k)$)
so that $\rtrmodel_k(s) \approx \mfc(\retr_{x_k}(s))$ and $\rtrmodel_k(\zeros) =
\mfc(x_k)$---Section~\ref{sec:application-rtr} provides the details.

In Propositions~\ref{prop:rtr-tcg-capture} and~\ref{prop:conv-rate}, we identify
three sufficient conditions on the subproblem solver's choice of $s_k$ in order
to secure superlinear local convergence of $\sequence{x_k}$ to a single point,
assuming \pl{}.
Such conditions transpire in other proofs of superlinear convergence,
see for example~\cite[\S4.2]{absil2007trust} for TR
and~\cite[\S4]{yue2019quadratic} for regularized Newton.

The first condition is that $\rtrmodel_k(s_k)$ (which is approximately
$\mfc(x_{k+1})$ for accepted steps) is smaller than $\rtrmodel_k(\zeros) =
\mfc(x_k)$, by some amount known as the \emph{Cauchy decrease}.
\begin{condition}\label{cond:better-than-cauchy}
  There exists a constant $\rtrsufficientdecrease > 0$ such that, for all $k$,
  the step $s_k$ satisfies
  \begin{align*}
    \rtrmodel_k(\zeros) - \rtrmodel_k(s_k) \geq \rtrsufficientdecrease \|\grad \mfc(x_k)\| \min\!\bigg(\Delta_k, \frac{\|\grad \mfc(x_k)\|^3}{\big|\inner{\grad \mfc(x_k)}{\linearmap_k[\grad \mfc(x_k)]}\big|}\bigg).
  \end{align*}
\end{condition}
Essentially all reasonable subproblem solvers
satisfy~\cref{cond:better-than-cauchy}.
The next two conditions control the behavior near local minima.
Specifically, near a local minimum $\optpoint$,
steps should be small and be good approximate critical points of the model.
\begin{condition}\label{cond:strong-vs}
  There exist a constant $\strongvsconst \geq 0$ and a neighborhood $\mathcal{U}$ of $\optpoint$ such that if an
  iterate $x_k$ is in $\mathcal{U}$ then the step $s_k$ satisfies
  \begin{align*}
    \|s_k\| \leq \strongvsconst \|\grad \mfc(x_k)\|.
  \end{align*}
\end{condition}
\begin{condition}\label{cond:small-model}
  If the iterates converge to a local minimum, there exist constants
  $\modelconst \geq 0$ and $\power > 0$ such that
  \begin{align*}
    \|\grad \rtrmodel_k(s_k)\| \leq \modelconst \|\grad \mfc(x_k)\|^{1 + \power}
  \end{align*}
  for all $k$ large enough.
\end{condition}

An \emph{exact} subproblem solver certainly
satisfies~\cref{cond:better-than-cauchy}~\citep[(7.14)]{absil2008optimization}.
If the iterates $x_k$ converge to a local minimum where the Hessian is positive
definite, then an exact subproblem solver with $\linearmap_k = \hess f(x_k)$
also satisfies~\cref{cond:strong-vs} and~\cref{cond:small-model}.
This is because eventually the method produces Newton steps $s_k = -\hess
\mfc(x_k)^{-1}[\grad \mfc(x_k)]$ with $\|s_k\| \leq \|\hess \mfc(x_k)^{-1}\|
\|\grad \mfc(x_k)\| < \Delta_k$, hence $\grad \rtrmodel_k(s_k) = \zeros$.
(See for example~\citep[(7.9) with $\mu = 0$]{absil2008optimization}.)
Things are markedly different without the positive definite Hessian assumption, as we now discuss.

\subsection{Why an exact subproblem solver can fail yet \tcg{} succeeds}\label{subsec:intuition}

In general, if we assume only the \pl{} condition, exact subproblem solvers may
fail~\cref{cond:strong-vs}.
To understand why, it is useful to note that, for $\smooth{2}$ functions, the local \pl{} condition is
\emph{equivalent} to the \morsebott{} (\mb{}) property, as we define
now---see~\cite[\S2]{rebjock2023fast}.
Let
\begin{align}
  \optimalset & = \{x \in \manifold : x \text{ is a local minimum of $\mfc$}\}
  \label{eq:def-s}
\end{align}
be the set that contains all the local minima of $\mfc$.
If $\optimalset$ is an (embedded) submanifold of $\manifold$, then at each
$\optpoint \in \optimalset$ it has a \emph{tangent space}
$\tangent_{\optpoint}\optimalset$ whose orthogonal complement is the
\emph{normal space} $\normal_{\optpoint}\optimalset$.
\begin{definition}\label{def:mb}
  We say $\mfc$ satisfies the \emph{\morsebott{}} property at a local minimum
  $\optpoint \in \optimalset$ if
  \begin{align}\label{eq:morse-bott}\tag{\mb{}}
    \optimalset \textrm{ is a $\smooth{1}$ submanifold around } \optpoint
      && \textrm{ and } &&
    \ker\hess \mfc(\optpoint) = \tangent_{\optpoint}\optimalset.
  \end{align}
  If also $\inner{v}{\hess \mfc(\optpoint)[v]} \geq \plconstant\|v\|^2$ for some
  $\mu > 0$ and all $v \in \normal_{\optpoint}\optimalset$ then we say $\mfc$
  satisfies \emph{$\plconstant$-\mb{}} at $\optpoint$.
\end{definition}
Figure~\ref{fig:mb} illustrates the \mb{} property.
\begin{figure}[t]
  \centering
  \includegraphics[width=0.5\textwidth]{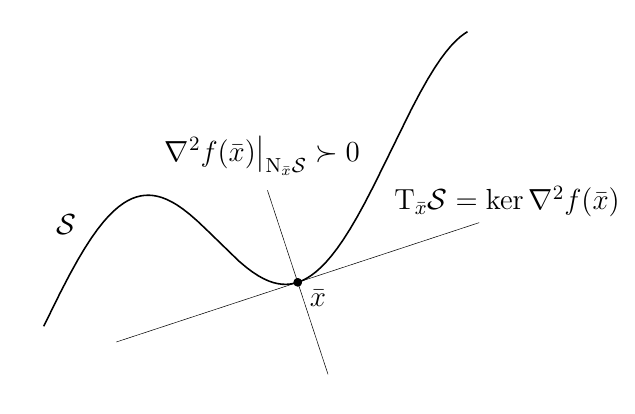}
  \caption{Illustration of the \morsebott{} property.
    The set of local minima $\optimalset$ is smooth around the point $\optpoint$.
    Here it has dimension 1 in the 2-dimensional search space $\manifold = \reals^2$.}\label{fig:mb}
\end{figure}
Notice that the dimension of $\optimalset$ as a manifold coincides
with the dimension of the kernel of $\hess\mfc(\optpoint)$.
We now suppose that $\mfc$ satisfies $\plconstant$-\pl{}
around a local minimum $\optpoint$ and exploit the fact that this
implies $\plconstant$-\mb{} at $\optpoint$
to describe the behavior of both the exact subproblem solver
and that of \tcg{} around $\optpoint$.

\paragraph{The Hessian typically has small negative eigenvalues arbitrarily
  close to $\optpoint$.}

To see this, consider a small ball $B$ around $\optpoint$ and assume for
contradiction that $\hess \mfc$ is positive semidefinite at all points in $B$.
If so, then $\mfc$ restricted to $B$ is (geodesically) convex, hence its
minimizers form a convex set.
Yet, the minimizers of $\mfc$ restricted to $B$ coincide with $\optimalset \cap B$, %
and there is no reason a priori that this ought to be convex.
Thus, save for that unusual case,
there exist points $x$ arbitrarily close to $\optpoint$ where $\hess \mfc(x)$ has a (small) negative eigenvalue.
(Appendix~\ref{sec:example} gives an explicit example.)

\paragraph{Those eigenvalues defeat the exact subproblem solver.}

Suppose that an iterate $x_k$ is at such a point where $\hess \mfc(x_k)$ has a
negative eigenvalue,
and that $\linearmap_k = \hess \mfc(x_k)$.
This implies that the exact solution $s_k$ to the TR subproblem lies on the
boundary of the trust region:
$\|s_k\| = \Delta_k$~\citep[Thm.~4.1]{nocedal2006numerical}.
This precludes capture results because the next iterate can be far ($\Delta_k$
may be large) even when $x_k$ is arbitrarily close to $\optpoint$
(see~\cite[\S4.2]{rebjock2023fast}).
In particular, $\|s_k\| = \Delta_k$ is in general
incompatible with condition~\cref{cond:strong-vs}.
This issue occurs because the exact subproblem solver is highly sensitive to
negative eigenvalues, even of small magnitude.

\paragraph{But \tcg{} automatically filters them out.}

At $\optpoint$, the Hessian has a kernel whose dimension is the same as that
of $\optimalset$ because~\eqref{eq:morse-bott} holds.
All the other eigenvalues are strictly positive.
Given a point $x$ close to $\bar x$, these positive eigenvalues remain large
compared to the others, which are concentrated around zero.
This defines a clear separation of the tangent space $\tangent_x \manifold$ in
two orthogonal complements: the primary space (large eigenvalues) and the
secondary space (eigenvalues close to zero).
Around $\optpoint$,
the gradient of $\mfc$ is almost orthogonal to the secondary space (see
Lemma~\ref{lemma:grad-image-hess}).
It follows that the Krylov space generated by $\hess \mfc(x)$ and $\grad
\mfc(x)$ essentially ignores the secondary components up to a certain number of
iterations of \cg{}.
Beyond this critical number of iterations, the iterates of \cg{} could explode
because the algorithm detects the small eigenvalues of $\hess \mfc(x)$.
However, one of the key fact we secure in this paper is that the natural
stopping criteria of \tcg{} trigger \emph{before} this explosion can happen (see
Figure~\ref{fig:vn-rn-norms}).
We analyze these dynamics in Section~\ref{subsec:tcg-c1-c2}, and deduce that
\tcg{} satisfies~\cref{cond:strong-vs} and~\cref{cond:small-model}.

\begin{figure}[t]
    \centering
    \begin{minipage}{0.495\linewidth}
        \includegraphics[width=1\linewidth]{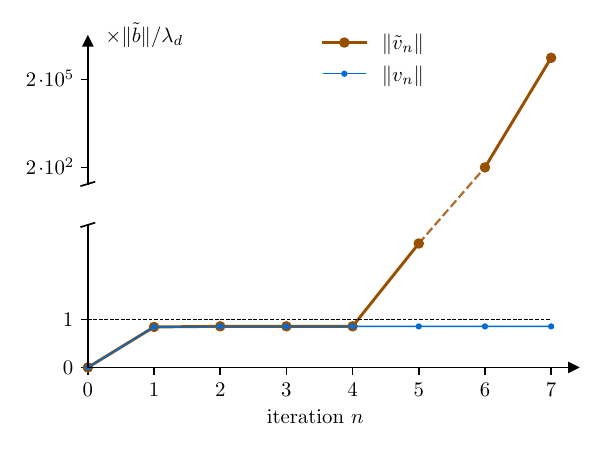}
    \end{minipage}
    \begin{minipage}{0.495\linewidth}
        \includegraphics[width=1\linewidth]{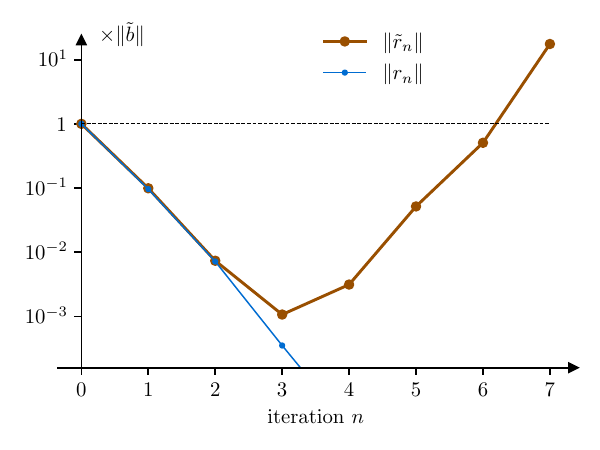}
    \end{minipage}
    \caption{
        Norms of the iterates $\tilde \cgvec_n$ and residuals $\tilde \cgres_n$
        of \cg{} on a problem $(\tmat, \tweight)$.
        Here $\tmat$ is diagonal with size $\mdim = 11$.
        For illustration, there are $\sdim = 10$ eigenvalues close to $1$ and $1$
        eigenvalue equal to 0.
        The norm of the first $\sdim$ entries of the weight vector $\tweight$ is
        normalized to 1 and the entry associated to the zero eigenvalue is
        $10^{-3}$.
        Notice how the norm of the iterate $\tilde \cgvec_n$ explodes only
        \emph{after} the residual $\tilde \cgres_n$ became small: this is why \tcg{}
        can stop \emph{before} explosion, with a good solution.
        For reference, we also plot the same quantities for the well-conditioned problem $(\mat, \weight)$ of size $10$, where the zero eigenvalue was removed.
    }\label{fig:vn-rn-norms}
\end{figure}

\subsection{Contributions}

In the first part of the paper (Sections~\ref{sec:reminders-cg} and~\ref{sec:indefinite-cg}),
we design tools to study the dynamics of \cg{} when the input matrix
has eigenvalues of small magnitude that may be negative.
\begin{itemize}
\item From Section~\ref{subsec:intuition}, we find that we need to understand
  the behavior of \cg{} on systems $(\tmat, \tweight)$ where $\tmat$ may have
  some small (possibly negative) eigenvalues, and the corresponding components
  of $\tweight$ are small.
  Existing theory does not handle that, but empirically we see that the initial
  iterates of \cg{} on $(\tmat, \tweight)$ are closely related to those of \cg{}
  on $(\mat, \weight)$, where $\mat$ is the well-conditioned, positive definite
  part of $\tmat$, and $\weight$ is the corresponding part of $\tweight$.
  We make this precise in Theorem~\ref{th:bounds-iterates}.
  See also Figure~\ref{fig:vn-rn-norms}.
\item In order to do this, we first relate the \lanczos{} polynomials associated
  to these two problems (Lemmas~\ref{lemma:identity-rhos}
  and~\ref{lemma:identity-rhos-2}).
  We deduce sufficient conditions for an iteration of \cg{} on $(\tmat,
  \tweight)$ to be well defined (Lemma~\ref{lemma:bound-roots} and
  Corollary~\ref{cor:roots-lower-bound}),
  in which case we relate the \cg{} polynomials
  associated to the two problems (Theorem~\ref{th:identity-qs}).
\item That is only made possible by first introducing a special (and possibly
  new) family of polynomials in~Section~\ref{subsec:prexipols}.
  We work out their properties with respect to the
  \lanczos{} polynomials.
\item In Section~\ref{subsec:effective-regime} we particularize to a regime where the above results are actionable.
  In particular, we exhibit an iteration of \cg{} with bounds for the iterate
  and residual norms (Lemma~\ref{lemma:c-bounds}).
\end{itemize}
The second part (Section~\ref{sec:application-rtr}) of the paper applies the
aforementioned results to TR-\tcg{}.
\begin{itemize}
\item Our main result is Theorem~\ref{th:rtr-main-theorem}.
  Given a local minimum where \pl{} holds, it ensures capture of the iterates
  and superlinear convergence for TR with the \tcg{} subproblem solver.
\item Lemma~\ref{lemma:cg-iterate-existence} is the keystone to establish
  Theorem~\ref{th:rtr-main-theorem}.
  The proof relies on Section~\ref{sec:indefinite-cg}.
  Roughly, it states that if $x$ is a point near a local minimum where \pl{}
  holds, the iterates of \cg{} on $(\hess \mfc(x), -\grad \mfc(x))$ are almost
  oblivious to the small eigenvalues of the Hessian, for a while.
\item From Lemma~\ref{lemma:cg-iterate-existence} we deduce that TR with \tcg{}
  satisfies conditions~\cref{cond:strong-vs} and~\cref{cond:small-model} around
  minima where \pl{} holds (Propositions~\ref{prop:rtr-tcg-strong-vs}
  and~\ref{prop:rtr-tcg-small-model}).
  This is because the early truncation rules of \tcg{} trigger before the small
  eigenvalues can cause harm.
\item Finally, in Section~\ref{subsec:captureandorder} we show that any
  subproblem solver that satisfies~\cref{cond:better-than-cauchy},
  \cref{cond:strong-vs} and~\cref{cond:small-model} (\tcg{} in particular) enables fast local convergence
  (Propositions~\ref{prop:rtr-tcg-capture} and~\ref{prop:conv-rate}).
\end{itemize}
As an aside, we deduce implications for optimization on quotient manifolds in Section~\ref{sec:quotients}.

\subsection{Related work}\label{subsec:related-work}

\paragraph{Krylov subspace methods.}

The history of Krylov methods is broad.
\cite{lanczos1950iteration} introduced his famous algorithm to find extreme
eigenpairs of a matrix.
Soon after, \cite{hestenes1952methods} described the \cg{}
algorithm to solve systems of linear equations.
These two algorithms are tightly related.
Standard references include~\citep{greenbaum1997iterative}
and~\citep{parlett1998symmetric}.
It is classical to analyze Krylov subspace methods with
polynomials~\cite[Ch.~2,~3]{liesen2013krylov}.
This is because both the \lanczos{} and \cg{} algorithms are linked to the
theory of orthogonal polynomials and Gauss
quadrature~\cite[Ch.~4]{golub2010matrices}.

Analyses of \cg{} normally assume a positive definite matrix, whereas we need to
handle (small) negative eigenvalues as well.
There is a rich literature on \cg{} with inexact
arithmetic~\citep{paige1971computation,greenbaum1989behavior,greenbaum1992predicting}.
See~\citep{meurant2006lanczos}
and~\citep[p.~475--476]{meurantstrakos2006lanczos} for more pointers.
Potentially, that line of work could have provided a foundation to build on.
However, to the best of our knowledge, those analyses tend to assume sufficient
positive definiteness to withstand rounding errors, and hence indefinite
matrices are not (even indirectly) covered by their conclusions.

\paragraph{Krylov and trust-region methods.}

It is possible to solve the trust-region subproblem~\eqref{eq:rtr-subproblem}
exactly.
Several algorithms have been proposed for this;
for example, see~\citep{more1983computing}, \citep{adachi2017solving},
and \citep{carmon2020first}.

Solving the subproblem is only a means to an end (minimizing the cost
function $\mfc$).
For this reason, many practical implementations of TR
solve the subproblem only \emph{approximately}.
To do this, the \lanczos{} and \cg{} algorithms (and variants) have been
extensively used as subroutines.
The truncated CG (\tcg{}) algorithm is one particularly popular
approach introduced by~\cite{toint1981towards} and~\cite{steihaug1983conjugate}.
At the same time,~\cite{dembo1982inexact,dembo1983truncated} proved that inexact
Newton methods preserve fast convergence to non-degenerate critical points even
with truncated steps.
See also ``notes and references'' in~\cite[\S7.5.1]{conn2000trust} for more
historic notes.
\cite{gould1999solving} proposed an algorithm to continue the \tcg{} process
after the algorithm reaches the boundary of the trust region.
\cite{yuan2000truncated} proved that the model decrease resulting from the
\tcg{} step is at least half that of the exact solution when the Hessian is
positive definite.

The minimal residual method (\minres{}, introduced by~\cite{paige1975solution})
is closely related to \cg{}.
\cite{fong2012cg} empirically compared \cg{} and \minres{}.
They suggested that \minres{} may be preferable when the algorithm is meant to stop
early (as is the case for the TRS), and they proposed partial theoretical explanations.
Later,~\cite{liu2022minres,liu2022newton} scrutinized the preeminence of \cg{}
to solve subproblems in Newton-type methods and also argued that \minres{} is a
reasonable choice.
They proved that \minres{} enjoys many favorable properties and proposed a
Newton-type algorithm based on it.
Analyzing the local behavior of \minres{} under the \pl{} condition could be an
interesting direction to extend our work.

\paragraph{\pl{} and local convergence.}

In seminal articles, \cite{lojasiewicz1963propriete,lojasiewicz1982trajectoires}
showed functions in a large class satisfy special inequalities, and he used them
to study continuous dynamical systems.
Concurrently,~\cite{polyak1963gradient} proved that a global
version of those inequalities is sufficient for gradient descent
to converge linearly.
Since then, the condition is often called \pl{} (also ``gradient dominance'').
Variations have been extensively invoked in optimization literature,
notably~\citep{attouch2010proximal,attouch2013convergence,bolte2014proximal,karimi2016linear}.
We focus here on the interactions between \pl{} and second-order algorithms.

\cite{nesterov2006cubic} popularized the regularized Newton algorithm
of \cite{griewank1981modification} and proved local superlinear
convergence of order $4/3$ assuming \pl{}.
Later, \cite{cartis2011adaptive,cartis2011adaptive2} proposed an adaptive
variant (ARC) and proved quadratic convergence to points where the Hessian is positive
definite.
\cite{zhou2018convergence} characterized the local convergence rate of
regularized Newton depending on the \loja{} exponent of the problem.
\cite{yue2019quadratic} proved that regularized Newton
with exact subproblem solver converges quadratically
to minimizers where the \emph{error bound} condition holds.
This condition is in fact equivalent to \pl{};
\cite{rebjock2023fast}
leveraged this to obtain quadratic convergence for ARC,
with both exact and inexact subproblem solvers.

In contrast, for trust-region methods there exist superlinear convergence
results in two cases: \emph{(i)} for general $\mfc$ with positive definite
Hessian (see for example~\cite[\S4.4]{nocedal2006numerical}
and~\citep{absil2007trust}), and \emph{(ii)} assuming \pl{} but specialized to
nonlinear least squares $x \mapsto \|F(x)\|^2$, around a global minimum
$\bar x$ satisfying $F(\bar x) = \zeros$~\citep{fan2006convergence}.
In that work, the subproblems are modified in a \levenmarq{} way, which
ensures semidefinite Hessian approximations.

\section{\cg{}: reminders and a possibly new family of polynomials}\label{sec:reminders-cg}

This section collects properties of \cg{} we need in
Section~\ref{sec:indefinite-cg}.
The first part recalls classical theory.
The second part introduces key polynomials.
In this whole section, we let $\mat$ be a $\sdim \times \sdim$ symmetric matrix
and $\weight \in \reals^\sdim$.
Since \cg{} is equivariant with respect to orthogonal transformations, we assume
without loss of generality that $\mat$ is diagonal, that is,
\begin{align*}
  \mat = \diag(\lambda_1, \dots, \lambda_\sdim) && \text{where} && \lambda_1 \geq \dots \geq \lambda_\sdim.
\end{align*}
However, we do \emph{not} assume that the input matrix $\mat$ is positive
definite nor that it is well conditioned.
We consider exact arithmetic.

Many of the results below rely on polynomials (summarized in
Table~\ref{tab:pols}).
The set $\pols{n}$ contains all univariate polynomials with real coefficients
and of degree at most $n$.
Given a polynomial $\pol$, we let $\polcoef{n}{\pol}$ denote its coefficient of
degree $n$.
Furthermore, $\restr{\mat}{\krylovspace}{\normalscaling}$ is the
restriction of a linear operator $\mat$ to a subspace $\krylovspace$.

\subsection{Background: \cg{} through the lens of optimal polynomials}\label{subsec:cg-background}

\begin{algorithm}[t]\caption{Truncated conjugate gradient (\tcg{})}\label{alg:tcg}
  \begin{algorithmic}[1]
    \State \textbf{Parameters:} $\tcgkappa > 0$, $\power \in \interval[open left]{0}{1}$
    \State \textbf{Input:} $\mat \in \reals^{\sdim \times \sdim}$ symmetric, $\weight \in \reals^\sdim$,
    $\Delta > 0$
    \State $\cgvec_0 = \zeros$, $\cgres_0 = b$, $\cgdir_0 = b$
    \If{$\|\cgres_0\| = 0$}
      \State \textbf{output $\cgvec_0$}
    \EndIf
    \For{$n = 1,2,\dots$}
      \State $\alpha_n = \frac{\|\cgres_{n - 1}\|^2}{\inner{\cgdir_{n - 1}}{\mat \cgdir_{n - 1}}}$\label{line:inner-product}
      \State $\cgvec_{n - 1}^+ = \cgvec_{n - 1} + \alpha_n \cgdir_{n - 1}$
      \If{$\inner{\cgdir_{n - 1}}{\mat \cgdir_{n - 1}} \leq 0$ or $\|\cgvec_{n -
            1}^+\| \geq \Delta$} \Comment{truncation 1}\label{line:truncation-1}
        \State $\cgvec_n = \cgvec_{n - 1} + t \cgdir_{n - 1}$ with $t \geq 0$ such that $\|\cgvec_n\| =
        \Delta$
        \State \textbf{output $\cgvec_n$}
      \EndIf\label{line:truncation-1-end}
      \State $\cgvec_n = \cgvec_{n - 1}^+$
      \State $\cgres_n = \cgres_{n - 1} - \alpha_n \mat \cgdir_{n - 1}$
      \If{$\|\cgres_n\| \leq \|\cgres_0\|\min(\|\cgres_0\|^\power, \tcgkappa)$}
        \Comment{truncation 2}\label{line:truncation-2}
        \State \textbf{output $\cgvec_n$}
      \EndIf\label{line:truncation-2-end}
      \State $\beta_n = \frac{\|\cgres_n\|^2}{\|\cgres_{n - 1}\|^2}$
      \State $\cgdir_n = \cgres_n + \beta_n \cgdir_{n - 1}$
    \EndFor{}
  \end{algorithmic}
\end{algorithm}
Algorithm~\ref{alg:tcg} describes \tcg{}.
\cg{} is the same but without the two truncation parts in
lines~\ref{line:truncation-1}--\ref{line:truncation-1-end}
and~\ref{line:truncation-2}--\ref{line:truncation-2-end}.
We always consider that the starting vector is $\cgvec_0 = \zeros$.
This section provides some classical background about \cg{}.
Given an integer $n \in \{1, \dots, d\}$, we define the $n$th Krylov matrix and
the associated subspace as
\begin{align*}
  \krylovmat_n =
  \begin{bmatrix}
    \weight & \mat \weight & \cdots & \mat^{n - 1} \weight
  \end{bmatrix}
  && \text{and} &&
                   \krylovspace_n = \vecspan \krylovmat_n.
\end{align*}
\begin{definition}\label{def:grade}
  The \emph{grade} $\grade = \grade(\mat, \weight)$ is the largest integer $n$ such
  that $\krylovspace_n$ has dimension $n$.
\end{definition}
\begin{definition}\label{def:well-defined}
    The $n$th iteration of CG is \emph{well defined} if $n \leq \grade$ and $\mat|_{\krylovspace_n} \succ 0$.
\end{definition}
Iteration $n$ is well defined exactly if the inner products in
line~\ref{line:inner-product} are (strictly) positive up to the $n$th
iteration~\cite[Thm.~38.1]{trefethen1997numerical}.
We consider only those iterations.

\paragraph{\lanczos{} polynomials.}

\cg{} is an iterative procedure to solve $Ax = b$.
The \lanczos{} algorithm is an iterative procedure to compute extreme eigenpairs.
Their iterations are related to a
sequence of monic orthogonal polynomials that we describe now.
Define the bilinear form
\begin{align}\label{eq:semi-inner-product}
  \inner{\pol}{\qol} = \sum_{i = 1}^\sdim \weight_i^2 \pol(\lambda_i)\qol(\lambda_i)
\end{align}
on $\pols{\grade}$ (the linear space of polynomials of degree up to $\grade$).
It is a semi-inner product which is positive definite on $\pols{\grade - 1}$.
The form $\inner{\cdot}{\cdot}$ induces a semi-norm $\|\cdot\|$ on
$\pols{\grade}$.
With $\polcoef{n}{p}$ denoting the coefficient of degree $n$ for the polynomial
$p$, the $n$th \emph{\lanczos{} polynomial} is the (unique) solution to the
convex optimization problem
\begin{align}\label{eq:def-rho-pol}
  \rhopol _n = \argmin_{\rhopol \in \pols{n}} \|\rhopol\|^2 \qquad \text{subject to} \qquad \polcoef{n}{\rhopol} = 1.
\end{align}
In particular, $\rhopol_n$ is monic of degree $n$.
The kernel of $\inner{\cdot}{\cdot}$ is $\vecspan \rhopol_\grade$.
The following lemma states that the polynomials $\sequence{\rhopol_i}$ are well
defined and it describes some of their properties.

\begin{lemma}\label{lemma:rho-basic-props}
  For all $n \in \{0, \dots, \grade\}$ there exists a unique solution
  $\rhopol_n$ to~\eqref{eq:def-rho-pol}.
  The $n$ roots of $\rhopol_n$ are real, distinct, and in the interval
  $\interval{\lambda_\sdim}{\lambda_1}$.
  The polynomials $\rhopol_0, \dots, \rhopol_\grade$ are orthogonal for~\eqref{eq:semi-inner-product}.
\end{lemma}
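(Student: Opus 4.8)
The plan is to set up the problem \eqref{eq:def-rho-pol} as minimization of a positive semidefinite quadratic form over an affine subspace of $\pols{n}$, and then unpack what the semi-inner product \eqref{eq:semi-inner-product} knows about degrees up to $\grade$. First I would observe that $\inner{\cdot}{\cdot}$ restricted to $\pols{n}$ for $n \le \grade - 1$ is genuinely positive definite: if $\|\pol\| = 0$ with $\deg \pol \le \grade - 1$, then $\pol$ vanishes at the $\grade$ distinct points $\lambda_{i_1}, \dots, \lambda_{i_\grade}$ carrying nonzero weight $\weight_i$ (there are at least $\grade$ such points, since otherwise the Krylov matrix $\krylovmat_\grade$ — a sum of rank-one terms indexed by those points — would have rank $< \grade$, contradicting Definition \ref{def:grade}), forcing $\pol = 0$ as it has degree $< \grade$. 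Hence minimizing $\|\rhopol\|^2$ over the affine slice $\{\polcoef{n}{\rhopol} = 1\}$ inside $\pols{n}$ is, for $n \le \grade - 1$, minimizing a strictly convex function over a nonempty closed affine set, which has a unique solution; for $n = \grade$ one must be slightly more careful since the form is only semidefinite on $\pols{\grade}$, but the kernel is exactly $\vecspan\{\rhopol_\grade\}$ where $\rhopol_\grade = \prod(x - \lambda_{i_j})$ over the support, which is monic of degree $\grade$, so it is the unique minimizer (value zero) of \eqref{eq:def-rho-pol} at $n = \grade$. This also pins down that the kernel statement in the surrounding text is consistent.

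Next, for the orthogonality claim, I would invoke the standard first-order optimality (normal equations) for \eqref{eq:def-rho-pol}: at the minimizer $\rhopol_n$, the gradient of $\|\cdot\|^2$ is orthogonal to the feasible directions, which are exactly $\pols{n-1}$ (the linear subspace along which $\polcoef{n}{\cdot}$ stays fixed). Thus $\inner{\rhopol_n}{\qol} = 0$ for every $\qol \in \pols{n-1}$, and in particular $\inner{\rhopol_n}{\rhopol_m} = 0$ for $m < n$ since $\rhopol_m \in \pols{m} \subseteq \pols{n-1}$; by symmetry this gives mutual orthogonality of $\rhopol_0, \dots, \rhopol_\grade$. (For $n = \grade$ the same argument works because $\rhopol_\grade$ is in the kernel of the form, so it is orthogonal to everything.)

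For the location of the roots, I would use the orthogonality together with the positive-definiteness on $\pols{\grade-1}$ via the classical sign-change argument for orthogonal polynomials: suppose $\rhopol_n$ ($1 \le n \le \grade$, so in particular $n-1 \le \grade - 1$ where the form is an honest inner product) has only $k < n$ distinct real roots $\mu_1, \dots, \mu_k$ in the open interval $(\lambda_\sdim, \lambda_1)$ at which it changes sign; form $\qol(x) = \prod_{j=1}^k (x - \mu_j) \in \pols{k} \subseteq \pols{n-1}$. Then $\rhopol_n \qol$ does not change sign on the support points $\{\lambda_i : \weight_i \ne 0\}$ (all of which lie in $[\lambda_\sdim, \lambda_1]$), so $\inner{\rhopol_n}{\qol} = \sum \weight_i^2 \rhopol_n(\lambda_i)\qol(\lambda_i) \ne 0$ — it is a sum of nonzero terms of one sign, nonzero because $\rhopol_n$ cannot vanish at all $\grade$ support points (it has degree $n \le \grade$ and is monic, hence nonzero; if $n < \grade$ it simply cannot vanish at $\grade$ points, and if $n = \grade$ then $\rhopol_\grade$ is exactly the vanishing-at-support polynomial so one instead notes $\qol$ would have to be chosen as $\rhopol_\grade/(\text{something})$, handled separately or excluded since the claim for $n=\grade$ follows by the explicit product form). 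This contradicts $\rhopol_n \perp \pols{n-1}$. Hence $\rhopol_n$ has $n$ sign changes, forcing $n$ distinct real roots, all necessarily in $[\lambda_\sdim, \lambda_1]$ (a monic polynomial of degree $n$ with a root outside would change sign there giving an extra root, or: the roots of an orthogonal polynomial lie in the convex hull of the support). The main obstacle I anticipate is bookkeeping the boundary case $n = \grade$ cleanly — the form degenerates there and $\rhopol_\grade$ has the explicit factored form $\prod_{i \in \operatorname{supp}} (x - \lambda_i)$, so its roots are literally among the $\lambda_i$; I would handle $n \le \grade - 1$ by the genuine-inner-product machinery above and dispatch $n = \grade$ by direct inspection, noting that the support of the weights, being of size at least $\grade$ and at most $\grade$ (since $\krylovspace_{\grade+1}$ has dimension $\grade$), has size exactly $\grade$, so $\rhopol_\grade$ has exactly $\grade$ distinct real roots in $[\lambda_\sdim, \lambda_1]$ as claimed.
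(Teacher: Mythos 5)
The paper does not actually prove this lemma: it simply labels it classical and cites \cite[Lem.~3.2.2, Lem.~3.2.4]{liesen2013krylov}. So there is no proof in the paper to compare against. Your proposal supplies the standard classical argument (positive definiteness of the discrete bilinear form on $\pols{\grade-1}$ for existence and uniqueness, first-order optimality of~\eqref{eq:def-rho-pol} for orthogonality, and the sign-change argument for the roots), which is almost certainly what the cited reference does. The approach is correct and the bookkeeping of the boundary case $n = \grade$ is sensible.

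One small elision worth closing in the sign-change step: you claim $\inner{\rhopol_n}{\qol} \neq 0$ because ``$\rhopol_n$ cannot vanish at all $\grade$ support points,'' but a term $\weight_i^2\rhopol_n(\lambda_i)\qol(\lambda_i)$ could still be zero via $\qol(\lambda_i)=0$ even when $\rhopol_n(\lambda_i)\neq 0$, and you do not rule this out. The fix is immediate and worth stating: the roots $\mu_1,\dots,\mu_k$ of $\qol$ were chosen to be roots of $\rhopol_n$, so $\rhopol_n(\lambda_i)\neq 0$ forces $\lambda_i \notin\{\mu_1,\dots,\mu_k\}$, hence $\qol(\lambda_i)\neq 0$ as well. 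Since $\|\rhopol_n\|^2>0$ for $n<\grade$ (positive definiteness on $\pols{\grade-1}$ plus $\rhopol_n$ monic), some support point has $\rhopol_n(\lambda_i)\neq 0$, and that term in the single-signed sum is then strictly nonzero, giving the contradiction. With that line added, the proof is complete.
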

\begin{proof}
  This is classical: see for
  example~\cite[Lem.~3.2.2, Lem.~3.2.4]{liesen2013krylov}.
\end{proof}

It is also known that the roots of $\rhopol_n$ and $\rhopol_{n + 1}$ interlace
but we are not going to use this fact.

\begin{remark}\label{remark:grade}
  Given an eigenvalue $\lambda$ of $\mat$, let $P_\lambda$ be the orthogonal
  projector onto the eigenspace of $\mat$ associated to $\lambda$.
  Define the \emph{weight} of $\lambda$ as $\|P_\lambda \weight\|$.
  The grade $\grade$ of $(\mat, \weight)$ is the number of distinct eigenvalues
  of $\mat$ with (strictly) positive weight.
  It is also the smallest integer $n$ such that $\|\rhopol_n\| = 0$.
\end{remark}

With the orthogonality of $\rhopol_0, \dots, \rhopol_\grade$ and a simple
application of the Pythagorean theorem we obtain the following decomposition.

\begin{lemma}\label{lemma:ortho-decomp}
  Given $n \in \{0, \dots, \grade\}$ and a polynomial $\pol \in \pols{n}$, the
  following decomposition holds:
  \begin{align*}
    \pol = \polcoef{n}{\pol} \rhopol_n + \sum_{i = 0}^{n - 1} \frac{\inner{\pol}{\rhopol_i}}{\|\rhopol_i\|^2}\rhopol_i.
  \end{align*}
\end{lemma}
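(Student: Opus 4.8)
The plan is to use the \lanczos{} polynomials $\rhopol_0, \dots, \rhopol_n$ as a basis of $\pols{n}$ and simply read off the coordinates of $\pol$ in that basis. First I would record the structural facts from Lemma~\ref{lemma:rho-basic-props}: each $\rhopol_i$ is monic of degree exactly $i$, so $\rhopol_0, \dots, \rhopol_n$ are linearly independent; since there are $n+1$ of them and $\dim \pols{n} = n+1$, they span $\pols{n}$. Hence there exist unique scalars $c_0, \dots, c_n$ with $\pol = \sum_{i=0}^{n} c_i \rhopol_i$, and the whole task reduces to identifying the $c_i$.

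Next I would pin down the top coefficient separately from the rest, to avoid a degenerate division. For $c_n$: among $\rhopol_0, \dots, \rhopol_n$ only $\rhopol_n$ has a nonzero coefficient of degree $n$, and that coefficient is $1$ since $\rhopol_n$ is monic; comparing degree-$n$ coefficients on both sides of $\pol = \sum_i c_i \rhopol_i$ gives $c_n = \polcoef{n}{\pol}$. For $i \in \{0, \dots, n-1\}$: I would take the semi-inner product of both sides against $\rhopol_i$. This makes sense because $\inner{\cdot}{\cdot}$ is defined on all of $\pols{\grade}$ and $\pol, \rhopol_i \in \pols{n} \subseteq \pols{\grade}$. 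By the orthogonality of $\rhopol_0, \dots, \rhopol_\grade$ (Lemma~\ref{lemma:rho-basic-props}) every cross term drops, leaving $\inner{\pol}{\rhopol_i} = c_i \|\rhopol_i\|^2$. It then remains to divide by $\|\rhopol_i\|^2$, which is legitimate because $i \leq n-1 \leq \grade - 1$ and, by Remark~\ref{remark:grade}, $\grade$ is the smallest index with $\|\rhopol_\grade\| = 0$; hence $\|\rhopol_i\| > 0$ for every $i < \grade$. This yields $c_i = \inner{\pol}{\rhopol_i}/\|\rhopol_i\|^2$ and the claimed identity follows.

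There is no genuinely hard step here; the only point requiring care is not dividing by $\|\rhopol_n\|$, which can vanish precisely when $n = \grade$. That is exactly why the coefficient on $\rhopol_n$ is extracted by the degree-comparison argument rather than by an orthogonal projection. As an aside, applying the Pythagorean theorem to the orthogonal expansion $\pol = \polcoef{n}{\pol}\rhopol_n + \sum_{i=0}^{n-1} c_i \rhopol_i$ also gives $\|\pol\|^2 = \polcoef{n}{\pol}^2 \|\rhopol_n\|^2 + \sum_{i=0}^{n-1} \inner{\pol}{\rhopol_i}^2 / \|\rhopol_i\|^2$, which is the companion identity in which this decomposition is typically used later.
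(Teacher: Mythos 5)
Your proof is correct and takes essentially the same approach as the paper: both isolate the top coefficient $\polcoef{n}{\pol}$ of $\rhopol_n$ without dividing (the paper by subtracting $\polcoef{n}{\pol}\rhopol_n$ to get a degree-$\leq n-1$ remainder, you by comparing degree-$n$ coefficients) and then read off the remaining coefficients via orthogonality against $\rhopol_0, \dots, \rhopol_{n-1}$. Your version is slightly more explicit about why $\|\rhopol_i\| > 0$ for $i < \grade$, a point the paper leaves implicit.
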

\begin{proof}
  The polynomial $q = p - \polcoef{n}{\pol}\rhopol_n$ has degree at most $n - 1$.
  Decompose $q$ in the orthogonal basis $\{\rhopol_0, \dots, \rhopol_{n - 1}\}$
  (Lemma~\ref{lemma:rho-basic-props}) and leverage $\inner{\rhopol_n}{\rhopol_i}
  = 0$ for all $i < n$ to obtain the identity.
\end{proof}

The optimality conditions of the convex optimization
problem~\eqref{eq:def-rho-pol} entirely characterize the polynomial
$\rhopol_n$.

\begin{lemma}\label{lemma:rho-opt-cond}
  For $n \in \{0, \dots, \grade\}$, the polynomial $\rhopol_n$ defined
  in~\eqref{eq:def-rho-pol} is uniquely characterized by
  \begin{align}\label{eq:rho-opt-cond}
    \inner{\pol}{\rhopol_n} = \polcoef{n}{p}\|\rhopol_n\|^2
  \end{align}
  for all $\pol \in \pols{n}$.
  In particular, $\inner{\pol}{\rhopol_n} = 0$ when $\deg \pol \leq n - 1$.
\end{lemma}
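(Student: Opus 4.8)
The plan is to read the identity off the first-order optimality conditions of the convex program~\eqref{eq:def-rho-pol}, and then to argue that, among polynomials of the appropriate form, those conditions pin down $\rhopol_n$ uniquely. Recall from Lemma~\ref{lemma:rho-basic-props} that the minimizer $\rhopol_n$ exists, is unique, and is monic of degree $n$; I will use these facts freely.

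\textbf{Deriving the identity.} First I would observe that the feasible set $\{\rhopol \in \pols{n} : \polcoef{n}{\rhopol} = 1\}$ is an affine subspace whose direction space is exactly $\pols{n-1}$: two feasible polynomials differ by a polynomial with vanishing degree-$n$ coefficient, and conversely $\rhopol_n + \qol$ is feasible for every $\qol \in \pols{n-1}$. Hence for each such $\qol$ the scalar function $t \mapsto \|\rhopol_n + t\qol\|^2 = \|\rhopol_n\|^2 + 2t\inner{\rhopol_n}{\qol} + t^2\|\qol\|^2$ is minimized at $t = 0$, so its derivative there vanishes, giving $\inner{\rhopol_n}{\qol} = 0$ for all $\qol \in \pols{n-1}$. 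Now for arbitrary $\pol \in \pols{n}$, since $\rhopol_n$ is monic of degree $n$ the polynomial $\pol - \polcoef{n}{\pol}\rhopol_n$ has degree at most $n-1$, so it is orthogonal to $\rhopol_n$, whence $\inner{\pol}{\rhopol_n} = \polcoef{n}{\pol}\inner{\rhopol_n}{\rhopol_n} = \polcoef{n}{\pol}\|\rhopol_n\|^2$. The final sentence of the statement is just the case $\deg\pol \le n-1$, where $\polcoef{n}{\pol} = 0$.

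\textbf{Uniqueness.} Suppose $r \in \pols{n}$ is monic of degree $n$ and satisfies $\inner{\pol}{r} = \polcoef{n}{\pol}\|r\|^2$ for all $\pol \in \pols{n}$. Restricting $\pol$ to $\pols{n-1}$ shows $r \perp \pols{n-1}$; combined with $\rhopol_n \perp \pols{n-1}$ from the previous step, the difference $s := r - \rhopol_n$ lies in $\pols{n-1}$ (both $r$ and $\rhopol_n$ are monic of degree $n$) and is orthogonal to all of $\pols{n-1}$, in particular to itself, so $\|s\|^2 = 0$. Since $\inner{\cdot}{\cdot}$ is positive definite on $\pols{n-1}$ (as $n-1 \le \grade-1$), this forces $s = 0$, i.e.\ $r = \rhopol_n$.

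\textbf{Anticipated obstacle.} There is no genuine difficulty here beyond bookkeeping; the one point that needs care is to keep every step valid at the borderline $n = \grade$, where $\|\cdot\|$ is only a semi-norm and $\|\rhopol_\grade\| = 0$. The argument above never divides by $\|\rhopol_n\|$, and at $n = \grade$ the identity simply asserts that $\rhopol_\grade$ lies in the kernel of $\inner{\cdot}{\cdot}$, consistent with the stated fact that this kernel is $\vecspan\rhopol_\grade$. Restricting attention to monic polynomials (equivalently, to the feasible set of~\eqref{eq:def-rho-pol}) is also what makes the uniqueness claim meaningful, since the displayed identity is trivially satisfied by $\pol = 0$ as well.
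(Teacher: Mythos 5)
Your proof is correct and takes essentially the same route as the paper: both read the identity off the first-order optimality conditions of the convex problem~\eqref{eq:def-rho-pol} by testing against directions in $\pols{n-1}$ and then decomposing an arbitrary $\pol \in \pols{n}$ as $\polcoef{n}{\pol}\rhopol_n$ plus a lower-degree remainder. Where the paper disposes of uniqueness tersely (convexity plus the uniqueness of the minimizer make the first-order conditions characterizing), you spell out the reverse implication explicitly via the orthogonal difference $s = r - \rhopol_n \in \pols{n-1}$ and positive definiteness on $\pols{n-1}$; that is a slightly more careful version of the same argument, and it correctly handles the boundary case $n = \grade$ where $\|\cdot\|$ is only a semi-norm.
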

\begin{proof}
  Problem~\eqref{eq:def-rho-pol} is convex with a unique solution for the given
  $n$.
  It is hence characterized by the first-order optimality conditions:
  $\inner{q}{\rhopol_n} = 0$ for all $q \in \pols{n - 1}$.
  Notice that a polynomial $\pol \in \pols{n}$ can be written $\pol =
  \polcoef{n}{\pol} \rhopol_n + q$ where $q \in \pols{n - 1}$.
  Plugging this in the optimality condition gives $\inner{\pol}{\rhopol_n} =
  \polcoef{n}{\pol} \|\rhopol_n\|^2$.
\end{proof}

The roots of $\rhopol_n$ are called the \emph{Ritz values}.
The following classical lemma~\citep[Thm.~36.1]{trefethen1997numerical} links
them to the eigenvalues of $\mat$ restricted to the $n$th Krylov subspace.

\begin{lemma}\label{lemma:roots-and-eigenvalues}
  For all $n \leq \grade$ the roots of $\rhopol_n$ coincide with the eigenvalues
  of $\restr{\mat}{\krylovspace_n}{\normalscaling}$.
\end{lemma}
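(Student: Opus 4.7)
The plan is to identify the Lanczos polynomial $\rhopol_n$ with the characteristic polynomial of $T_n := \Pi_n \mat \Pi_n$ viewed as an operator on $\krylovspace_n$, where $\Pi_n$ denotes the orthogonal projector of $\reals^\sdim$ onto $\krylovspace_n$. Since $\krylovspace_n$ need not be $\mat$-invariant, this compression is the correct interpretation of $\restr{\mat}{\krylovspace_n}{\normalscaling}$, so its eigenvalues are exactly the roots of the characteristic polynomial. Once that identification is in hand, the lemma follows immediately from Lemma~\ref{lemma:rho-basic-props}, which tells us $\rhopol_n$ has $n$ distinct real roots.

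The first step is to translate the abstract orthogonality of $\rhopol_n$ into a geometric statement in $\reals^\sdim$. Using that $\mat$ is diagonal with entries $\lambda_i$, a direct computation gives $\inner{\pol(\mat)\weight}{\qol(\mat)\weight} = \sum_i \weight_i^2 \pol(\lambda_i)\qol(\lambda_i) = \inner{\pol}{\qol}$ for all polynomials $\pol, \qol$, where the left-hand inner product is the standard one on $\reals^\sdim$. Since $\krylovspace_n = \{\qol(\mat)\weight : \qol \in \pols{n - 1}\}$, Lemma~\ref{lemma:rho-opt-cond} then implies that $\rhopol_n(\mat)\weight$ is orthogonal to $\krylovspace_n$. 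The second step is to show by induction that $T_n^k \weight = \mat^k \weight$ for every $k \leq n - 1$: each intermediate iterate $\mat^k \weight$ already lies in $\krylovspace_n$, so both projectors inside $T_n$ act as the identity; at the critical step one picks up a projection, $T_n^n \weight = \Pi_n \mat^n \weight$, since $\mat^n \weight$ typically escapes $\krylovspace_n$. Letting $\tilde\rhopol_n$ denote the characteristic polynomial of $T_n$ (monic, degree $n$), Cayley--Hamilton applied to $\weight \in \krylovspace_n$ combined with these power identities yields $\tilde\rhopol_n(\mat)\weight = (I - \Pi_n) \mat^n \weight$, so $\tilde\rhopol_n(\mat)\weight$ is likewise orthogonal to $\krylovspace_n$.

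To conclude, observe that $\rhopol_n - \tilde\rhopol_n$ has degree at most $n - 1$, so the vector $(\rhopol_n - \tilde\rhopol_n)(\mat)\weight$ lies in $\krylovspace_n$ by construction, yet is simultaneously orthogonal to $\krylovspace_n$ by the previous paragraph, and hence vanishes. Because $n \leq \grade$, the Krylov vectors $\weight, \mat\weight, \dots, \mat^{n - 1}\weight$ are linearly independent (Definition~\ref{def:grade}), so the only polynomial of degree at most $n - 1$ whose action on $\weight$ through powers of $\mat$ is zero is the zero polynomial. Thus $\rhopol_n = \tilde\rhopol_n$, and the roots of $\rhopol_n$ coincide with the eigenvalues of $T_n$, as required.

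The only subtle point is notational rather than mathematical: one must carefully treat $\restr{\mat}{\krylovspace_n}{\normalscaling}$ as an orthogonal compression rather than a restriction to an invariant subspace, and keep track of exactly when each factor $\Pi_n$ acts as the identity versus genuinely projects. Once that bookkeeping is clean, the argument is essentially two short applications of orthogonality plus Cayley--Hamilton.
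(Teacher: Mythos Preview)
Your argument is correct. The identification of the polynomial inner product~\eqref{eq:semi-inner-product} with the Euclidean inner product of the corresponding Krylov vectors is exactly right, the induction showing $T_n^k\weight = \mat^k\weight$ for $k\le n-1$ and $T_n^n\weight = \Pi_n\mat^n\weight$ is clean, and the Cayley--Hamilton step yields $\tilde\rhopol_n(\mat)\weight = (I-\Pi_n)\mat^n\weight \perp \krylovspace_n$ as you claim. The final uniqueness argument using $n\le\grade$ is also fine.

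As for comparison: the paper does not actually prove this lemma; it simply cites~\cite[Thm.~36.1]{trefethen1997numerical} as a classical fact. The standard textbook route goes through the \lanczos{} three-term recurrence, producing an orthonormal basis $Q_n$ of $\krylovspace_n$ and a tridiagonal $T_n = Q_n^\top\mat Q_n$, then identifying $\rhopol_n$ (up to normalization) with the characteristic polynomial of $T_n$ via the recurrence coefficients. Your approach bypasses the tridiagonal structure entirely and works directly with the compression $\Pi_n\mat\Pi_n$ and Cayley--Hamilton. This is arguably more elementary---it needs no recurrence machinery---at the cost of being slightly less constructive. Either way, you have supplied a genuine proof where the paper only gestured at one.
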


\paragraph{Connections to the \cg{} algorithm.}\label{par:connections-cg}

The polynomials $\{\rhopol_i\}$ are related to the iterations of \cg{} as we
describe now; see~\cite[\S5.6]{liesen2013krylov} for details.
Given $n \leq \grade$, the $n$th iteration of \cg{} is well defined
(Definition~\ref{def:well-defined}) if and only if
$\restr{\mat}{\krylovspace_n}{\normalscaling}$ is positive definite.
This is equivalent to $\rhopol_n$ having only positive roots by
Lemma~\ref{lemma:roots-and-eigenvalues}.
In this case, we define the degree $n$ polynomial
\begin{align}\label{eq:qpol-def}
  \qpol_n = \frac{\rhopol_n}{\rhopol_n(0)}.
\end{align}
This is a rescaled version of $\rhopol_n$ such that $\qpol_n(0) = 1$.
Moreover, let $\ppol_{n - 1}$ be the degree $n - 1$ polynomial that satisfies
$\qpol_n(x) = 1 - x \ppol_{n - 1}(x)$.
We can understand the state of \cg{} at iteration $n$ from the polynomials
$\qpol_n$ and $\ppol_{n - 1}$.
Indeed, the $n$th iterate and residual of \cg{} are given by
\begin{align*}
  \cgvec_n = \ppol_{n - 1}(\mat)\weight && \text{and} && \cgres_n = \qpol_n(\mat)\weight
\end{align*}
respectively.
It also follows that the norm of the residual is $\|\cgres_n\| = \|\qpol_n\|$,
where the norm on the right-hand side is the one induced
by~\eqref{eq:semi-inner-product}.

\paragraph{\cg{} is a minimization algorithm.}
As long as the matrix $\mat$ is positive definite on the $n$th Krylov
subspace $\krylovspace_n$, the iterates $\cgvec_n$ of \cg{} minimize a quadratic form as
follows~\cite[\S2.5.3]{liesen2013krylov}:
\begin{align}\label{eq:cg-iterates-minimize}
  \cgvec_n = \argmin_{v \in \krylovspace_n} \frac{1}{2} v^\top \mat v - \weight^\top v && \text{and} && \cgres_n = \weight - \mat \cgvec_n.
\end{align}
Let the columns of $Q_n \in \reals^{\sdim \times n}$ be an orthonormal basis of $\krylovspace_n$.
Then from~\eqref{eq:cg-iterates-minimize} we can write $\cgvec_n = Q_n(Q_n^\top
\mat Q_n^{})^{-1}Q_n^\top \weight$.
The eigenvalues of $Q_n^\top \mat Q_n$ are the Ritz values, that is, the roots
of $\rhopol_n$ owing to Lemma~\ref{lemma:roots-and-eigenvalues}.
It follows that the spectrum of the matrix $Q_n(Q_n^\top \mat Q_n^{})^{-1}Q_n^\top$
contains $\sdim - n$ zeros and the other eigenvalues are the inverses of the
roots of $\rhopol_n$.

When the input matrix $\mat$ is positive definite we can see the polynomials
$\sequence{\qpol_i}$ as the solutions of particular optimization problems.
Remember that we do \emph{not} assume that $\mat$ is positive definite in this section.
However, we will apply this result to a positive definite submatrix in
Section~\ref{subsec:effective-regime}.

\begin{lemma}\label{lemma:cg-pol-min}
  Assume that $\lambda_1, \dots, \lambda_\sdim > 0$.
  For all $n \in \{0, \dots, \grade\}$ the polynomial $\qpol_n$, as defined
  in~\eqref{eq:qpol-def}, is the unique solution to the optimization problem
  \begin{align}\label{eq:cg-pol-min}
    \min_{\qpol \in \pols{n}} \; \sum_{i = 1}^\sdim \frac{\qpol(\lambda_i)^2}{\lambda_i} \weight_i^2 \qquad \text{subject to} \qquad \qpol(0) = 1.
  \end{align}
\end{lemma}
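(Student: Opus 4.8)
The plan is to identify the objective in~\eqref{eq:cg-pol-min} with a squared norm for which the CG polynomial $\qpol_n$ is known to be optimal, namely the energy norm $\|\cgres_n\|_{\mat^{-1}}$ of the CG residual. Concretely, since $\mat = \diag(\lambda_1, \dots, \lambda_\sdim)$ with all $\lambda_i > 0$, define for $\qol \in \pols{n}$ the quantity $\Phi(\qol) = \sum_{i=1}^\sdim \frac{\qol(\lambda_i)^2}{\lambda_i}\weight_i^2$. The key observation is that if we write $y = \qol(\mat)\weight$, then $\Phi(\qol) = y^\top \mat^{-1} y$, and moreover the constraint $\qol(0) = 1$ is exactly the statement that $y = \weight - \mat z$ for some $z \in \krylovspace_n$ (write $\qol(x) = 1 - x\,\psi(x)$ with $\psi \in \pols{n-1}$ and set $z = \psi(\mat)\weight \in \krylovspace_n$; conversely any $z \in \krylovspace_n$ is $\psi(\mat)\weight$ for some $\psi \in \pols{n-1}$). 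Hence the optimization problem~\eqref{eq:cg-pol-min} is equivalent to minimizing $\|\weight - \mat z\|_{\mat^{-1}}^2 = (\weight - \mat z)^\top \mat^{-1}(\weight - \mat z)$ over $z \in \krylovspace_n$.

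Next I would recognize this as the standard variational characterization of the CG iterate: minimizing $\|\weight - \mat z\|_{\mat^{-1}}^2$ over $z \in \krylovspace_n$ is the same as minimizing $z^\top \mat z - 2\weight^\top z$ over $z \in \krylovspace_n$ (the two objectives differ by the additive constant $\weight^\top \mat^{-1}\weight$ and an overall factor of $2$), which by~\eqref{eq:cg-iterates-minimize} has the unique minimizer $z = \cgvec_n$, provided $\restr{\mat}{\krylovspace_n}{\normalscaling} \succ 0$. Under the hypothesis $\lambda_1, \dots, \lambda_\sdim > 0$ the matrix $\mat$ is positive definite, so this restriction is automatically positive definite for every $n \leq \grade$; thus every iteration up to the grade is well defined and the minimizer exists and is unique. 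Translating back through $y = \weight - \mat\cgvec_n = \cgres_n = \qpol_n(\mat)\weight$ (using the polynomial description of the residual recalled in Section~\ref{par:connections-cg}), the optimal $y$ corresponds to the polynomial $\qpol_n$, which therefore solves~\eqref{eq:cg-pol-min}.

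For uniqueness at the level of polynomials I would argue as follows: the map $\qol \mapsto (\qol(\lambda_1), \dots, \qol(\lambda_\sdim))$ restricted to $\pols{n}$ has a kernel spanned by $\rhopol_\grade$ when $n = \grade$ and is injective for $n < \grade$ (since a nonzero polynomial of degree $\le n < \grade$ cannot vanish at all $\grade$ distinct eigenvalues with positive weight — here one should be a little careful and phrase things in terms of the semi-inner product~\eqref{eq:semi-inner-product}, on which $\pols{\grade-1}$ is positive definite with kernel $\vecspan\rhopol_\grade$). Since $\Phi$ is a positive definite quadratic form in the values $(\qol(\lambda_i))_i$ weighted by $\weight_i^2/\lambda_i > 0$, the objective as a function of $\qol \in \pols{n}$ is convex, and any two minimizers differ by an element of the kernel; but $\polcoef{\grade}{\rhopol_\grade} \ne 0$ forces the difference, which must satisfy the homogeneous constraint (value $0$ at $x = 0$), to actually vanish on the support of the weights, hence to have zero objective and to coincide with $\qpol_n$ as the representative meeting the normalization. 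The one mild obstacle is bookkeeping the degenerate case $n = \grade$ (where the form is only a semi-norm): there the residual $\cgres_\grade = 0$, $\Phi(\qpol_\grade) = 0$, and $\qpol_\grade = \rhopol_\grade/\rhopol_\grade(0)$ is still the unique degree-$\grade$ polynomial with $\qpol(0)=1$ and zero objective, so the statement persists; I would dispatch this case explicitly rather than fold it into the generic argument.
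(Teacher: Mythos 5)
Your proof is correct but follows a genuinely different route from the paper's. The paper proves the lemma by a direct variational argument entirely inside the polynomial world: problem~\eqref{eq:cg-pol-min} is convex, its first-order optimality condition reads $\sum_i \lambda_i^{-1}\,p(\lambda_i)\,\qpol(\lambda_i)\,\weight_i^2 = 0$ for all $p \in \pols{n}$ with $p(0)=0$; writing $p = x\,q$ reduces this to $\inner{q}{\qpol} = 0$ for all $q \in \pols{n-1}$, and Lemma~\ref{lemma:rho-opt-cond} identifies the (normalized) solution as $\qpol_n$. You instead map the polynomial problem to the linear-algebra side: you interpret the objective as $\|\qol(\mat)\weight\|_{\mat^{-1}}^2$, identify the constraint $\qol(0)=1$ with the affine parametrization $y = \weight - \mat z$, $z \in \krylovspace_n$, and then invoke the variational characterization~\eqref{eq:cg-iterates-minimize} of the CG iterate, translating back via $\cgres_n = \qpol_n(\mat)\weight$. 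Both approaches are sound; the paper's is more self-contained (it never leaves the polynomial semi-inner-product framework and uses only Lemma~\ref{lemma:rho-opt-cond}), whereas yours leans on the minimization property of CG iterates but makes the ``residual energy norm'' interpretation very transparent.

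Two small remarks on your write-up. First, the uniqueness argument in your last paragraph is doing double duty: once you know the change of variables $\qol \leftrightarrow z$ is a bijection (which, for $n \le \grade$, follows from the injectivity of $\psi \mapsto \psi(\mat)\weight$ on $\pols{n-1}$ by counting roots against the $\grade$ eigenvalues carrying positive weight), uniqueness of $\cgvec_n$ already gives uniqueness of $\qpol_n$; the extra paragraph isn't wrong, just redundant. Second, in that paragraph you invoke $\polcoef{\grade}{\rhopol_\grade} \ne 0$ to kill the kernel direction at $n = \grade$, but the relevant fact is $\rhopol_\grade(0) \ne 0$ (its roots are the $\grade$ eigenvalues with positive weight, all strictly positive by hypothesis), so that $c\rhopol_\grade(0) = 0$ forces $c = 0$; equivalently, a nonzero polynomial in $\pols{\grade}$ cannot vanish both at $0$ and at the $\grade$ positive eigenvalues. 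That is what actually closes the degenerate case.
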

\begin{proof}
  Problem~\eqref{eq:cg-pol-min} is convex.
  The first-order optimality conditions state that a feasible polynomial $\qpol$
  is optimal if and only if $\sum_{i = 1}^\sdim \lambda_i^{-1} p(\lambda_i)
  \qpol(\lambda_i) \weight_i^2 = 0$ for all $p \in \pols{n}$ satisfying $p(0) =
  0$.
  The latter requires that $p(x) = x q(x)$ for some $q \in \pols{n - 1}$, hence
  $\qpol$ is optimal if and only if $\sum_{i = 1}^\sdim q(\lambda_i)
  \qpol(\lambda_i) \weight_i^2 = 0$ for all $q \in \pols{n - 1}$.
  We conclude with Lemma~\ref{lemma:rho-opt-cond}.
\end{proof}

\subsection{A possibly new family of polynomials}\label{subsec:prexipols}

\begin{table}
  \centering
  \begin{tabular}{ccccc}
    Polynomial & Definition & Normalization & Comments & Relation to \cg{}\\\noalign{\vskip 1pt}\hline\noalign{\vskip 2pt}
    $\rhopol_n$ & \multirow{2}{*}{minimizers of~\eqref{eq:both-def-rho-pol}} & \multirow{2}{*}{monic} & \multirow{2}{*}{\shortstack{Lemmas~\ref{lemma:rho-basic-props}, \ref{lemma:roots-and-eigenvalues}\\and~\ref{lemma:identity-rhos-2}}} & \\
    $\trhopol_n$ & & & & \\\noalign{\vskip 1pt}\hline\noalign{\vskip 2pt}
    $\qpol_n$ & $\qpol_n = \rhopol_n/\rhopol_n(0)$ & $\qpol_n(0) = 1$ & minimizer of~\eqref{eq:cg-pol-min} & $\cgres_n = \qpol_n(\mat) \weight$\\\noalign{\vskip 1pt}
    $\tqpol_n$ & $\tqpol_n = \trhopol_n/\trhopol_n(0)$ & $\tqpol_n(0) = 1$ & Theorem~\ref{th:identity-qs} & $\tilde \cgres_n = \tqpol_n(\tmat) \tweight$\\\noalign{\vskip 2pt}\hline\noalign{\vskip 2pt}
    $\ppol_{n - 1}$ & $\qpol_n(x) = 1 - x \ppol_{n - 1}(x)$ & \multirow{2}{*}{none} & Lemma~\ref{lemma:decreasing-ppols} & $\cgvec_n = \ppol_{n - 1}(\mat) \weight$\\\noalign{\vskip 1pt}
    $\tilde \ppol_{n - 1}$ & $\tqpol_n(x) = 1 - x \tilde \ppol_{n - 1}(x)$ & & & $\tilde \cgvec_n = \tilde \ppol_{n - 1}(\tilde \mat) \tweight$\\\noalign{\vskip 2pt}\hline\noalign{\vskip 2pt}
    $\prexipol_n$ & minimizer of~\eqref{eq:def-pre-xi-pol} & $\prexipol_n(\pxplambda) = 1$ & \multirow{2}{*}{\shortstack{$\prexipol_n^j = \prexipol_n$ with $\lambda = \lambda_j$\\interlace with $\rhopol_{n + 1}$}} & \\\noalign{\vskip 1pt}
    $\xipol_n$ & $\xipol_n = \prexipol_n/\prexipol_n(0)$ & $\xipol_n(0) = 1$ & & \\\noalign{\vskip 2pt}\hline
  \end{tabular}
  \caption{Summary of polynomials used in Sections~\ref{sec:reminders-cg}
    and~\ref{sec:indefinite-cg}.\label{tab:pols}}
\end{table}
In this section, we introduce special polynomials $\sequence{\prexipol_i}$ that
enjoy convenient properties with respect to the \lanczos{} polynomials
$\sequence{\rhopol_i}$ defined in~\eqref{eq:def-rho-pol}.
We have not seen these polynomials studied elsewhere, though they are intimately
related to
the minimal residual
method (\minres{}).\footnote{It is easy to reason that $\prexipol_n$ is the translation of a polynomial that
    naturally appears when studying the \minres{} algorithm on the problem $(\mat -
    \lambda I, \weight)$.}
Their extremal and interlacing properties play a key role to understand \cg{} on
two related instances in Section~\ref{sec:indefinite-cg}.
Remember that Table~\ref{tab:pols} summarizes all the polynomials that we
manipulate.

Fix some $\pxplambda \in \reals$ for this whole section.
(Later, we invoke its results for various values of $\pxplambda$.)
Then, for each $n < \grade$ let
\begin{align}\label{eq:def-pre-xi-pol}
  \prexipol_n = \argmin_{\prexipol \in \pols{n}} \|\prexipol\|^2 \qquad \text{subject to} \qquad \prexipol(\pxplambda) = 1.
\end{align}
There is indeed a unique solution to~\eqref{eq:def-pre-xi-pol} when $n < \grade$
because $\|\cdot\|^2$ is strongly convex on $\pols{\grade - 1}$.
(We do not need this, but when $n = \grade$ there is also a unique solution
provided that $\pxplambda \notin \{\lambda_1, \dots, \lambda_\sdim\}$.)

We show below that $\prexipol_n$ is of degree $n$ and that its roots are real
and distinct.
We first derive some basic properties.
The optimality conditions of the convex optimization
problem~\eqref{eq:def-pre-xi-pol} entirely characterize the polynomial
$\prexipol_n$.

\begin{lemma}\label{lemma:prexi-opt-cond}
  For $n \in \{0, \dots, \grade - 1\}$, the polynomial $\prexipol_n$ defined
  in~\eqref{eq:def-pre-xi-pol} is uniquely characterized by
  \begin{align}\label{eq:prexi-opt-cond}
    \inner{\pol}{\prexipol_n} = \pol(\pxplambda)\|\prexipol_n\|^2
  \end{align}
  for all $\pol \in \pols{n}$.
\end{lemma}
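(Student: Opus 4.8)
The plan is to mirror the proof of Lemma~\ref{lemma:rho-opt-cond}: view~\eqref{eq:def-pre-xi-pol} as a strictly convex quadratic program over an affine subspace, extract its first-order optimality condition, and then split an arbitrary test polynomial into a multiple of $\prexipol_n$ plus a polynomial vanishing at $\pxplambda$. First I would record why~\eqref{eq:def-pre-xi-pol} is well posed for $n \leq \grade - 1$: the semi-inner product~\eqref{eq:semi-inner-product} is genuinely positive definite on $\pols{\grade - 1}$ (as noted just after its definition), hence on $\pols{n}$, so $\prexipol \mapsto \|\prexipol\|^2 = \inner{\prexipol}{\prexipol}$ is strongly convex there; the feasible set $\{\prexipol \in \pols{n} : \prexipol(\pxplambda) = 1\}$ is a nonempty affine subspace (it contains the constant polynomial $1$), so there is a unique minimizer $\prexipol_n$. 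This positive-definiteness on $\pols{n}$ is the only place that uses $n \leq \grade - 1$, and it is the one point deserving a word of care; everything after it is formal.

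Next I would write down the first-order condition. The affine feasible set has direction space $\{q \in \pols{n} : q(\pxplambda) = 0\}$, and the derivative of $\|\cdot\|^2$ at $\prexipol_n$ along a direction $q$ is $2\inner{\prexipol_n}{q}$; hence $\prexipol_n$ is optimal if and only if $\inner{\prexipol_n}{q} = 0$ for every $q \in \pols{n}$ with $q(\pxplambda) = 0$. (This is the exact analogue of the condition $\inner{q}{\rhopol_n} = 0$ for $q \in \pols{n-1}$ used for the \lanczos{} polynomials.) Then, given any $\pol \in \pols{n}$, I would set $q = \pol - \pol(\pxplambda)\+\prexipol_n$; since $\prexipol_n(\pxplambda) = 1$ this $q$ lies in $\pols{n}$ and satisfies $q(\pxplambda) = 0$, so pairing $\pol = \pol(\pxplambda)\+\prexipol_n + q$ with $\prexipol_n$ and using $\inner{q}{\prexipol_n} = 0$ gives $\inner{\pol}{\prexipol_n} = \pol(\pxplambda)\|\prexipol_n\|^2$, which is~\eqref{eq:prexi-opt-cond}.

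For the ``uniquely characterized'' clause I would argue the converse: if $\prexipol \in \pols{n}$ is nonzero and satisfies $\inner{\pol}{\prexipol} = \pol(\pxplambda)\|\prexipol\|^2$ for all $\pol \in \pols{n}$, then choosing $\pol = \prexipol$ gives $\|\prexipol\|^2 = \prexipol(\pxplambda)\|\prexipol\|^2$, and since a nonzero element of $\pols{n} \subseteq \pols{\grade - 1}$ has $\|\prexipol\| > 0$ we get $\prexipol(\pxplambda) = 1$, so $\prexipol$ is feasible; restricting the hypothesis to $\pol = q$ with $q(\pxplambda) = 0$ gives $\inner{\prexipol}{q} = 0$, i.e. $\prexipol$ satisfies the first-order condition, so $\prexipol = \prexipol_n$ by uniqueness of the minimizer. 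Thus $\prexipol_n$ is the unique nonzero polynomial of degree at most $n$ obeying~\eqref{eq:prexi-opt-cond}. As anticipated, there is no real obstacle here beyond keeping track of the restriction $n \leq \grade - 1$ that makes $\|\cdot\|$ a true norm on $\pols{n}$; the remainder is a verbatim adaptation of the argument for $\rhopol_n$.
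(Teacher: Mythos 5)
Your proof is correct and follows essentially the same approach as the paper's: invoke strong convexity of~\eqref{eq:def-pre-xi-pol} on $\pols{n}$ (valid for $n \le \grade - 1$ since the semi-inner product is positive definite there), read off the first-order condition $\inner{q}{\prexipol_n} = 0$ for all $q \in \pols{n}$ with $q(\pxplambda) = 0$, and then decompose $\pol = \pol(\pxplambda)\+\prexipol_n + q$. Your additional converse paragraph (showing any nonzero polynomial satisfying~\eqref{eq:prexi-opt-cond} must be feasible and hence equal $\prexipol_n$) is a welcome bit of extra care that the paper leaves implicit under the phrase ``uniquely characterized.''
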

\begin{proof}
  Problem~\eqref{eq:def-pre-xi-pol} is strongly convex for the given $n$, hence
  its unique solution is characterized by the first-order optimality conditions.
  Explicitly, $\inner{q}{\prexipol_n} = 0$ for all $q \in
  \pols{n}$ such that $q(\pxplambda) = 0$.
  Notice that a polynomial $\pol \in \pols{n}$ can be written $\pol =
  \pol(\pxplambda) \prexipol_n + q$ where $q \in \pols{n}$ satisfies
  $q(\pxplambda) = 0$.
  Plugging this in the optimality condition gives $\inner{\pol}{\prexipol_n} =
  \pol(\pxplambda)\|\prexipol_n\|^2$.
\end{proof}

\begin{lemma}\label{lemma:prexi-short-recurrence}
  The identity
  \begin{align}\label{eq:prexi-short-recurrence}
    \frac{\prexipol_n}{\|\prexipol_n\|^2} = \frac{\prexipol_{n - 1}}{\|\prexipol_{n - 1}\|^2} + \rhopol_n(\pxplambda)\frac{\rhopol_n}{\|\rhopol_n\|^2}
  \end{align}
  holds for all $n \in \{1, \dots, \grade - 1\}$.
\end{lemma}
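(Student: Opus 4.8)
The plan is to work entirely within the space $\pols{\grade-1}$ equipped with the (positive definite) inner product $\inner{\cdot}{\cdot}$, and to identify the claimed identity by testing both sides against the orthogonal basis $\{\rhopol_0,\dots,\rhopol_{n}\}$ of $\pols{n}$. Define $\phi_n = \prexipol_n/\|\prexipol_n\|^2$ and $\psi_n = \prexipol_{n-1}/\|\prexipol_{n-1}\|^2 + \rhopol_n(\pxplambda)\,\rhopol_n/\|\rhopol_n\|^2$; both lie in $\pols{n}$, so it suffices to show $\inner{\pol}{\phi_n} = \inner{\pol}{\psi_n}$ for every $\pol \in \pols{n}$, and since both are in $\pols{n}\subseteq\pols{\grade-1}$ where the form is nondegenerate, this forces $\phi_n=\psi_n$.

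First I would compute $\inner{\pol}{\phi_n}$. By Lemma~\ref{lemma:prexi-opt-cond}, $\inner{\pol}{\prexipol_n} = \pol(\pxplambda)\|\prexipol_n\|^2$ for all $\pol\in\pols{n}$, hence $\inner{\pol}{\phi_n} = \pol(\pxplambda)$. Next I would compute $\inner{\pol}{\psi_n}$ term by term. Write $\pol\in\pols{n}$ using Lemma~\ref{lemma:ortho-decomp}: $\pol = \polcoef{n}{\pol}\rhopol_n + q$ with $q\in\pols{n-1}$. For the second term of $\psi_n$, orthogonality of the $\rhopol_i$ gives $\inner{\pol}{\rhopol_n} = \polcoef{n}{\pol}\|\rhopol_n\|^2$ (this is Lemma~\ref{lemma:rho-opt-cond}), so that term contributes $\rhopol_n(\pxplambda)\,\polcoef{n}{\pol}$. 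For the first term, apply Lemma~\ref{lemma:prexi-opt-cond} at level $n-1$: since $q\in\pols{n-1}$ we get $\inner{q}{\prexipol_{n-1}} = q(\pxplambda)\|\prexipol_{n-1}\|^2$, and since $\rhopol_n$ is orthogonal to everything in $\pols{n-1}$, in particular to $\prexipol_{n-1}$, we get $\inner{\pol}{\prexipol_{n-1}} = \inner{q}{\prexipol_{n-1}} = q(\pxplambda)\|\prexipol_{n-1}\|^2$. Thus the first term of $\psi_n$ contributes $q(\pxplambda)$. Adding up: $\inner{\pol}{\psi_n} = q(\pxplambda) + \polcoef{n}{\pol}\,\rhopol_n(\pxplambda)$. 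But $\pol = \polcoef{n}{\pol}\rhopol_n + q$ evaluated at $\pxplambda$ gives exactly $\pol(\pxplambda) = \polcoef{n}{\pol}\rhopol_n(\pxplambda) + q(\pxplambda)$, so $\inner{\pol}{\psi_n} = \pol(\pxplambda) = \inner{\pol}{\phi_n}$, as desired.

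The only point requiring care — and what I'd flag as the main obstacle — is the degenerate-norm bookkeeping: $\inner{\cdot}{\cdot}$ is only a semi-inner product on $\pols{\grade}$ (its kernel is $\vecspan\rhopol_\grade$), so the "test against a basis implies equality" step is legitimate only because everything in sight, namely $\phi_n$, $\psi_n$, and the test polynomials $\pol$, lies in $\pols{n}\subseteq\pols{\grade-1}$, where the form is positive definite. This is exactly why the statement restricts to $n\in\{1,\dots,\grade-1\}$: one needs $\prexipol_{n-1}$ and $\prexipol_n$ to be well defined (which requires $n\le\grade-1$ via the strong convexity noted after~\eqref{eq:def-pre-xi-pol}), and one needs $\rhopol_n\ne 0$, i.e. $\|\rhopol_n\|>0$, which holds for $n<\grade$ by Remark~\ref{remark:grade}. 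A secondary subtlety is making sure $\prexipol_{n-1}$ genuinely has degree allowing the decomposition argument; but since we only ever use $\prexipol_{n-1}\in\pols{n-1}$ and $q\in\pols{n-1}$, no degree-exactness of $\prexipol_{n-1}$ is needed. With these caveats handled, the computation above is routine.
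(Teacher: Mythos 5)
Your argument is correct, and it is a valid \emph{verification-style} dual to the paper's own derivation. The paper decomposes $\prexipol_n$ directly in the Lanczos basis $\{\rhopol_0,\dots,\rhopol_n\}$, uses the characterization $\inner{\rhopol_i}{\prexipol_n} = \rhopol_i(\pxplambda)\|\prexipol_n\|^2$ to obtain the explicit expansion $\prexipol_n/\|\prexipol_n\|^2 = \sum_{i=0}^n \rhopol_i(\pxplambda)\,\rhopol_i/\|\rhopol_i\|^2$, and then simply subtracts the $n$ and $n-1$ cases to make the sum telescope. You instead take the claimed identity as given and test both sides against an arbitrary $\pol\in\pols{n}$, using Lemma~\ref{lemma:prexi-opt-cond} at levels $n$ and $n-1$ together with Lemma~\ref{lemma:rho-opt-cond}, then invoke nondegeneracy on $\pols{\grade-1}$. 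The two routes rely on the same ingredients (the optimality characterization of $\prexipol_n$ and the orthogonality of the $\rhopol_i$'s), but the paper's is arguably shorter since the telescoping requires no extra decomposition of the test polynomial, whereas your approach buys a transparent ``what this identity means in the weak sense'' reading, and it also makes explicit the use of Lemma~\ref{lemma:rho-opt-cond}, which the paper's proof only uses implicitly through Lemma~\ref{lemma:ortho-decomp}. Your comments about why $n\le\grade-1$ is required (well-definedness of $\prexipol_{n-1},\prexipol_n$, positivity of $\|\rhopol_n\|$, and positive definiteness of the form on $\pols{\grade-1}$) are all accurate and match the paper's implicit assumptions.
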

\begin{proof}
  Decompose the polynomial $\prexipol_n$ in the orthogonal basis $\{\rhopol_0,
  \dots, \rhopol_n\}$ (Lemma~\ref{lemma:rho-basic-props}) to obtain
  \begin{align*}
    \prexipol_n = \sum_{i = 0}^n \frac{\inner{\prexipol_n}{\rhopol_i}}{\|\rhopol_i\|^2} \rhopol_i = \|\prexipol_n\|^2 \sum_{i = 0}^n \frac{\rhopol_i(\pxplambda)}{\|\rhopol_i\|^2}\rhopol_i,
  \end{align*}
  where we used the optimality condition~\eqref{eq:prexi-opt-cond} for the
  second equality.
  Divide by $\|\prexipol_n\|^2$ and subtract two consecutive equalities to
  obtain the result.
\end{proof}

The following is a variation of classical arguments at the basis of the
Cristoffel--Darboux formula.
This is an important result to control the roots of the polynomials
$\sequence{\prexipol_i}$.

\begin{lemma}\label{lemma:roots-interlace}
  Let $n \in \{0, \dots, \grade - 1\}$.
  If $\pxplambda < \lambda_\sdim$ then the roots of $\prexipol_n$ and
  $\rhopol_{n + 1}$ interlace (strictly).
  In particular, $\prexipol_n$ has degree $n$, and its roots are real, distinct,
  and in the interval $\interval{\lambda_\sdim}{\lambda_1}$.
\end{lemma}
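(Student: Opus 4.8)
The plan is to derive an explicit Christoffel--Darboux-type formula expressing $\prexipol_n$ through $\rhopol_n$ and $\rhopol_{n+1}$, and then to locate the roots of $\prexipol_n$ by evaluating that formula at the roots of $\rhopol_{n+1}$.

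First I would introduce the rational function
\begin{align*}
  h(x) = \frac{\rhopol_n(\pxplambda)\+\rhopol_{n+1}(x) - \rhopol_{n+1}(\pxplambda)\+\rhopol_n(x)}{x - \pxplambda},
\end{align*}
which is in fact a polynomial since the numerator vanishes at $x = \pxplambda$, and note that $n+1 \le \grade$ so $\rhopol_{n+1}$ is well defined. Because $\pxplambda < \lambda_\sdim$ is strictly smaller than every root of $\rhopol_n$ (Lemma~\ref{lemma:rho-basic-props}), we have $\rhopol_n(\pxplambda) \neq 0$, hence the numerator has degree exactly $n+1$ and $h$ has degree exactly $n$. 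The claim is that $h$ is a nonzero multiple of $\prexipol_n$. To prove it I would verify the optimality characterization behind Lemma~\ref{lemma:prexi-opt-cond}: for every $q \in \pols{n}$ with $q(\pxplambda) = 0$, write $q(x) = (x - \pxplambda)\hat q(x)$ with $\hat q \in \pols{n-1}$; then
\begin{align*}
  \inner{q}{h} = \inner{\hat q}{\rhopol_n(\pxplambda)\+\rhopol_{n+1} - \rhopol_{n+1}(\pxplambda)\+\rhopol_n} = 0
\end{align*}
by orthogonality (Lemma~\ref{lemma:rho-opt-cond}, using $\deg \hat q \le n-1$). Also $h(\pxplambda) \neq 0$: otherwise $h$ would be a member of $\pols{n} \subseteq \pols{\grade-1}$ vanishing at $\pxplambda$, so $\|h\|^2 = \inner{h}{h} = 0$, and since $\|\cdot\|^2$ is positive definite on $\pols{\grade-1}$ (here $n \le \grade-1$) this forces $h \equiv 0$, contradicting $\deg h = n$. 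Hence $\prexipol_n = h / h(\pxplambda)$, which already gives $\deg \prexipol_n = n$.

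It remains to localize the roots. Let $r_1 < \dots < r_{n+1}$ be the roots of $\rhopol_{n+1}$: they are real, distinct, and lie in $\interval{\lambda_\sdim}{\lambda_1}$ (Lemma~\ref{lemma:rho-basic-props}), and since $\pxplambda < \lambda_\sdim$ each $r_j - \pxplambda$ is positive. Evaluating the formula at $x = r_j$ gives $h(r_j) = -\rhopol_{n+1}(\pxplambda)\+\rhopol_n(r_j)/(r_j - \pxplambda)$. Here $\rhopol_{n+1}(\pxplambda) = \prod_i (\pxplambda - r_i)$ is nonzero with the fixed sign $(-1)^{n+1}$, independent of $j$. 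The one nontrivial ingredient is the behavior of $\rhopol_n$ at the Ritz values: by the classical strict interlacing of the zeros of two consecutive Lanczos polynomials (equivalently, Cauchy interlacing for the compression $\restr{\mat}{\krylovspace_n}{\normalscaling}$ of $\restr{\mat}{\krylovspace_{n+1}}{\normalscaling}$ via Lemma~\ref{lemma:roots-and-eigenvalues}, or the three-term recurrence $\rhopol_{n+1}(x) = (x-a_n)\rhopol_n(x) - b_n\rhopol_{n-1}(x)$ with $b_n = \|\rhopol_n\|^2/\|\rhopol_{n-1}\|^2 > 0$), we have $\rhopol_n(r_j) \neq 0$ and $\sign(\rhopol_n(r_j))$ alternates with $j$. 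Therefore $\sign(h(r_j))$ alternates with $j$, and by the intermediate value theorem the degree-$n$ polynomial $h$ has at least one root strictly inside each of the $n$ intervals $(r_j, r_{j+1})$ --- hence exactly one, and no others. This is exactly strict interlacing of the roots of $\prexipol_n$ with those of $\rhopol_{n+1}$, and it shows the roots of $\prexipol_n$ are real, distinct, and contained in $(r_1, r_{n+1}) \subseteq \interval{\lambda_\sdim}{\lambda_1}$.

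The main obstacle is the sign analysis in the last step: showing $\rhopol_n$ does not vanish at, and alternates sign across, the Ritz values $r_j$. This is the only place that leans on a classical nontrivial fact; the remainder is bookkeeping with the semi-inner product~\eqref{eq:semi-inner-product} and the optimality conditions of Lemmas~\ref{lemma:rho-opt-cond} and~\ref{lemma:prexi-opt-cond}. If one prefers to avoid invoking interlacing of $\rhopol_n$ and $\rhopol_{n+1}$, an alternative is to test the optimality condition of~\eqref{eq:def-pre-xi-pol} against $s(x) = (x-\pxplambda)\prod_k (x - \mu_k)$, where the $\mu_k$ separate the sign changes of $\prexipol_n$ between consecutive eigenvalues of positive weight: since $s(\pxplambda) = 0$ we get $\inner{s}{\prexipol_n} = 0$, whereas every summand $\weight_i^2\+ s(\lambda_i)\prexipol_n(\lambda_i)$ can be made to have the same sign with at least one strictly positive, forcing $\prexipol_n$ to have $n$ sign changes in $(\lambda_\sdim, \lambda_1)$; the interlacing statement is then recovered from the explicit formula for $h$.
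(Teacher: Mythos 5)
Your proof is correct, but it takes a genuinely different route from the paper. You identify $\prexipol_n$ with the (normalized) Christoffel--Darboux reproducing kernel at $\pxplambda$, i.e.\ you produce the closed-form $\prexipol_n \propto h$ with
$h(x) = \big(\rhopol_n(\pxplambda)\rhopol_{n+1}(x) - \rhopol_{n+1}(\pxplambda)\rhopol_n(x)\big)/(x-\pxplambda)$,
verify it satisfies the optimality characterization of Lemma~\ref{lemma:prexi-opt-cond} (your computation $\inner{q}{h}=\inner{\hat q}{\rhopol_n(\pxplambda)\rhopol_{n+1}-\rhopol_{n+1}(\pxplambda)\rhopol_n}=0$ is right, and the $h(\pxplambda)\neq 0$ argument correctly uses positive-definiteness of $\|\cdot\|^2$ on $\pols{\grade-1}$), and then read off interlacing by evaluating $h$ at the Ritz values $r_j$ and invoking the classical \emph{strict} interlacing of $\rhopol_n$ with $\rhopol_{n+1}$ to control $\sign\rhopol_n(r_j)$. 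The paper instead proves, by induction on $n$, that the Wronskian-like polynomial $\rhopol_{n+1}'\prexipol_n - \rhopol_{n+1}\prexipol_n'$ has constant sign $(-1)^n$, deriving along the way a Christoffel--Darboux-type decomposition $(x-\pxplambda)\prexipol_n = \alpha_n\rhopol_{n+1}+\beta_n\rhopol_n$ and tracking $\sign\alpha_n$, $\sign\beta_n$ directly; notably, the authors go out of their way \emph{not} to invoke the interlacing of consecutive Lanczos polynomials (they remark after Lemma~\ref{lemma:rho-basic-props} that they will not use it). Your route is shorter and more transparent, at the cost of importing one classical external fact; the paper's route is longer and inductive but keeps the argument self-contained within the lemmas it has already established. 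Both are valid, and your explicit formula for $\prexipol_n$ in terms of $\rhopol_n,\rhopol_{n+1}$ is a nice by-product that the paper never states.
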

\begin{proof}
  It is sufficient to show that $\rhopol_{n + 1}'\prexipol_n - \rhopol_{n +
    1}\prexipol_n'$ has constant (strict) sign to obtain that the roots
  interlace.
  Indeed, assume it is the case and consider two consecutive zeros $z_1 < z_2$
  of $\rhopol_{n + 1}$.
  Rolle's theorem implies that $\rhopol_{n + 1}'(z_1)$ and $\rhopol_{n + 1}'(z_2)$
  have opposite signs (recall that the zeros of $\rhopol_{n + 1}$ are simple by
  Lemma~\ref{lemma:rho-basic-props}).
  Since $\rhopol_{n + 1}'\prexipol_n - \rhopol_{n + 1}\prexipol_n'$ has constant
  sign, it follows that $\prexipol_n(z_1)\prexipol_n(z_2) < 0$.
  This implies that $\prexipol_n$ has a root in the interval
  $\interval[open]{z_1}{z_2}$.

  We now prove by induction that the polynomial $\rhopol_{n + 1}'\prexipol_n -
  \rhopol_{n + 1}\prexipol_n'$ indeed has constant sign equal to $(-1)^n$ for
  all $n \in \{0, \dots, \grade - 1\} $.
  The claim is clear when $n = 0$ because $\rhopol_1'\prexipol_0 -
  \rhopol_1\prexipol_0' = 1$.
  Let $n \in \{1, \dots, \grade - 1\}$ and suppose that the claim is true at
  order $n - 1$.
  Define the polynomial $\pol \colon x \mapsto (x - \pxplambda)\prexipol_n(x)$
  and decompose it in the orthogonal basis $\{\rhopol_0, \dots, \rhopol_{n +
    1}\}$ with Lemma~\ref{lemma:ortho-decomp}.
  For all $i < n$ observe that $\inner{\pol}{\rhopol_i} = \inner{\prexipol_n}{(x
    - \pxplambda)\rhopol_i} = 0$ using the characterization of $\prexipol_n$
  in~\eqref{eq:prexi-opt-cond}.
  We obtain
  \begin{align}\label{eq:pre-darboux-decomposition}
    \pol = \alpha_n \rhopol_{n + 1} + \beta_n \rhopol_n
  \end{align}
  where $\alpha_n = \polcoef{n + 1}{\pol}$ and $\beta_n =
  \inner{\pol}{\rhopol_n}/\|\rhopol_n\|^2$.
  From~\eqref{eq:pre-darboux-decomposition} we deduce that
  \begin{equation}
    \begin{aligned}\label{eq:darboux-decomposition}
      (x - \pxplambda)\prexipol_n(x)\prexipol_n(y) = \alpha_n\rhopol_{n + 1}(x)\prexipol_n(y) + \beta_n\rhopol_n(x)\prexipol_n(y)\\
      (y - \pxplambda)\prexipol_n(y)\prexipol_n(x) = \alpha_n\rhopol_{n + 1}(y)\prexipol_n(x) + \beta_n\rhopol_n(y)\prexipol_n(x)
    \end{aligned}
  \end{equation}
  for all $x, y$.
  From the identity~\eqref{eq:prexi-short-recurrence} and the definition of $p$
  we find that $\alpha_n = \rhopol_n(\pxplambda)\|\prexipol_n\|^2/\|\rhopol_n\|^2$,
  which is a non-zero number (see Lemma~\ref{lemma:rho-basic-props}).
  Subtracting the two identities in~\eqref{eq:darboux-decomposition} and
  dividing by $\alpha_n(x - y)$ yields
  \begin{align*}
    \frac{\rhopol_{n + 1}(x)\prexipol_n(y) - \rhopol_{n + 1}(y)\prexipol_n(x)}{x - y} &= \frac{1}{\alpha_n}\bigg(\prexipol_n(x)\prexipol_n(y) - \beta_n\frac{\rhopol_n(x)\prexipol_n(y) - \rhopol_n(y)\prexipol_n(x)}{x - y}\bigg)\\
                                                                                      &= \frac{1}{\alpha_n}\bigg(\prexipol_n(x)\prexipol_n(y) - \beta_n\frac{\|\prexipol_n\|^2}{\|\prexipol_{n - 1}\|^2}\frac{\rhopol_n(x)\prexipol_{n - 1}(y) - \rhopol_n(y)\prexipol_{n - 1}(x)}{x - y}\bigg),
  \end{align*}
  where we used~\eqref{eq:prexi-short-recurrence} for the last equality.
  Taking the limit $y \to x$ in the previous expression gives
  \begin{align}\label{eq:darboux-induction}
    \rhopol_{n + 1}'\prexipol_n - \rhopol_{n + 1}\prexipol_n' = \frac{1}{\alpha_n}\bigg(\prexipol_n^2 - \beta_n \frac{\|\prexipol_n\|^2}{\|\prexipol_{n - 1}\|^2}\big( \rhopol_n'\prexipol_{n - 1} - \rhopol_n\prexipol_{n - 1}' \big)\bigg).
  \end{align}
  We now study the signs of $\alpha_n$ and $\beta_n$.
  Recall that Lemma~\ref{lemma:rho-basic-props} ensures that the degree $n$
  polynomial $\rhopol_n$ is monic with roots in the interval
  $\interval{\lambda_\sdim}{\lambda_1}$.
  We assumed $\pxplambda < \lambda_\sdim$ so this implies $\sign
  \rhopol_n(\pxplambda) = (-1)^n$.
  From the expression $\alpha_n =
  \rhopol_n(\pxplambda)\|\prexipol_n\|^2/\|\rhopol_n\|^2$ we deduce $\sign
  \alpha_n = \sign \rhopol_n(\pxplambda) = (-1)^n$.
  Now consider $\beta_n = \inner{\pol}{\rhopol_n}/\|\rhopol_n\|^2$.
  We substitute $\rhopol_n / \|\rhopol_n\|^2$
  using~\eqref{eq:prexi-short-recurrence} and notice
  that~\eqref{eq:prexi-opt-cond} implies $\inner{(x - \pxplambda)
    \prexipol_n}{\prexipol_{n - 1}} = \inner{\prexipol_n}{(x - \pxplambda)
    \prexipol_{n - 1}} = 0$.
  This yields
  \begin{align*}
    \beta_n = \frac{1}{\rhopol_n(\pxplambda)\|\prexipol_n\|^2}\inner{\pol}{\prexipol_n} = \frac{1}{\rhopol_n(\pxplambda)\|\prexipol_n\|^2}\sum_{i = 1}^\sdim \weight_i^2 (\lambda_i - \pxplambda) \prexipol_n(\lambda_i)^2,
  \end{align*}
  where the second equality comes from the definition of the inner product
  $\inner{\cdot}{\cdot}$.
  At least one of the terms in the sum is (strictly) positive because
  $\prexipol_n$ has degree at most $n < \grade$ (see Remark~\ref{remark:grade}).
  It follows that $\sign \beta_n = \sign \rhopol_n(\pxplambda) = (-1)^n$.
  We deduce from~\eqref{eq:darboux-induction} that the sign of $\rhopol_{n +
    1}'\prexipol_n - \rhopol_{n + 1}\prexipol_n'$ is constant and equal to
  $(-1)^n$.
\end{proof}

In fact, the conclusion of Lemma~\ref{lemma:roots-interlace} remains true for
$\pxplambda > \lambda_1$ but we will not need this.

\section{Understanding \cg{} with eigenvalues around zero}\label{sec:indefinite-cg}

In this section we build tools to analyze \cg{}
when the input matrix has a small (possibly) indefinite part with eigenvalues
close to zero.
To do this, we relate the behavior of \cg{} on two instances.
Specifically, we let $\mdim \geq \sdim$ be two integers.
Given $\lambda_{1\vphantom{\mdim}}, \dots, \lambda_\mdim \in \reals$ and $\weight_{1\vphantom{\mdim}}, \dots, \weight_\mdim \in \reals$, we define
\begin{align*}
  \mat &= \diag(\lambda_1, \dots, \lambda_\sdim), && &\weight &= (\weight_1, \dots, \weight_\sdim)^\top,\\
  \tmat &= \diag(\lambda_{1\vphantom{\mdim}}, \dots, \lambda_{\sdim\vphantom{\mdim}}, \lambda_{\sdim + 1\vphantom{\mdim}}, \dots, \lambda_\mdim), && \text{and} &\tweight &= (\weight_{1\vphantom{\mdim}}, \dots, \weight_{\sdim\vphantom{\mdim}}, \weight_{\sdim + 1\vphantom{\mdim}}, \dots, \weight_\mdim)^\top.
\end{align*}
As a result, $\mat$ is a submatrix of $\tmat$ and $\weight$ is the corresponding subvector of $\tweight$.
We think of $(\mat, \weight)$ as a well-behaved reference problem (later we will
assume $\mat \succ \zeros$) and we want to understand \cg{} on $(\tmat,
\tweight)$, where $\tmat$ may have an indefinite part.
To do this, we relate the \lanczos{} polynomials of the two problems in
Section~\ref{subsec:link}.
This relation relies on the special polynomials introduced in
Section~\ref{subsec:prexipols} and involves a specific vector $\coef$ of size
$\mdim - \sdim$.
Since the \lanczos{} polynomials determine the iterates of \cg{}
(Section~\ref{subsec:cg-background}), we can deduce a relation between the
iterates of the two problems.
We find that they are close when the 1-norm of $\coef$ is small.
Finally, in Section~\ref{subsec:effective-regime} we consider a certain regime
and secure that this norm is indeed small at some iteration.
Figures~\ref{fig:vn-rn-norms} and~\ref{fig:pols} illustrate the empirical
behavior of \cg{} on two such instances.

\begin{figure}[t]
  \centering
  \includegraphics[width=0.95\textwidth]{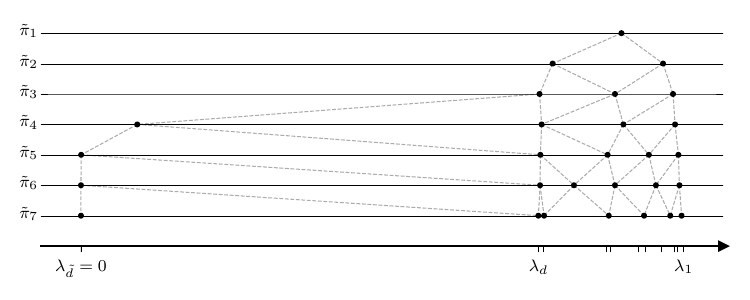}
  \caption{
    \lanczos{} polynomials $\trhopol_1, \ldots, \trhopol_7$ associated to the
    iterates $1$ to $7$ for the problem in Figure~\ref{fig:vn-rn-norms}.
    The horizontal axis shows the eigenvalues $\lambda_1 \geq \dots \geq
    \lambda_\sdim > \lambda_\mdim = 0$ of $\tmat$.
    For each iteration $n$ we plot the roots of the Lanczos polynomial
    $\trhopol_n$.
    The lines linking the roots emphasize the interlacement.
    Most of the weight of $\tweight$ lies in the interval
    $\interval{\lambda_\sdim}{\lambda_1}$.
    This is why the roots are located in this interval during the first
    iterations.
    After the fourth iteration the minimal root rapidly approaches zero.
    This causes the explosion of the iterates and residuals (see
    Figure~\ref{fig:vn-rn-norms}), but \tcg{} can stop earlier.
  }\label{fig:pols}
\end{figure}

\subsection{Tracking CG iterates on two related problems}\label{subsec:link}

In this section, we seek to understand the iterations of \cg{} with inputs
$(\tmat, \tweight)$ as they relate to those with inputs $(\mat, \weight)$.
We let $\grade$ denote the grade of $(\mat, \weight)$.
Note that the grade of $(\tmat, \tweight)$ is at least $\grade$.

\paragraph{A link between the \lanczos{} polynomials.}

We define two semi-inner products $\inner{\cdot}{\cdot}$ and
$\inner{\cdot}{\cdot}_\sim$ on $\pols{\grade}$ as
\begin{align}\label{eq:both-semi-inner-product}
  \inner{\pol}{\qol} = \sum_{i = 1}^\sdim \weight_i^2 \pol(\lambda_i)\qol(\lambda_i) && \text{and} && \inner{\pol}{\qol}_\sim = \sum_{i = 1}^\mdim \weight_i^2 \pol(\lambda_i)\qol(\lambda_i) = \inner{\pol}{\qol} + \sum_{i = \sdim + 1}^\mdim \weight_i^2 \pol(\lambda_i)\qol(\lambda_i).
\end{align}
They induce the two semi-norms $\|\cdot\|$ and $\|\cdot\|_\sim$.
We can then define the \lanczos{} polynomials of degree $n \leq \grade$ associated to each problem as
\begin{align}\label{eq:both-def-rho-pol}
  \rhopol _n = \argmin_{\rhopol \in \pols{n}} \|\rhopol\|^2 && \text{and} && \trhopol_n = \argmin_{\rhopol \in \pols{n}} \|\rhopol\|_\sim^2, && \text{both subject to} && \polcoef{n}{\rhopol} = 1.
\end{align}
To understand the iterations of \cg{} on $(\tmat, \tweight)$, we set out to
relate $\rhopol_n$ and $\trhopol_n$ using the polynomials
$\sequence{\prexipol_i}$ defined in Section~\ref{subsec:prexipols}.
For this, we rely on $\mdim - \sdim$ instances of them.
\begin{definition}\label{def:prexipol-j}
  For $\sdim < j \leq \mdim$ and $n < \grade$, let $\prexipol_n^j$ be the
  solution to~\eqref{eq:def-pre-xi-pol} with data $(\mat, \weight)$ and $\pxplambda = \lambda_j$.
\end{definition}
The conclusion of the following lemma holds without any assumption on the
eigenvalues $\lambda_1, \dots, \lambda_{\mdim}$.
In particular, we do \emph{not} assume that they are positive.
For convenience, $B, C, D \in \reals^{(\mdim - \sdim) \times (\mdim - \sdim)}$
and $w, \coef \in \reals^{\mdim - \sdim}$ below are indexed from $\sdim + 1$ to
$\mdim$.
All of these are defined at a specific iteration $n$.

\begin{lemma}\label{lemma:identity-rhos}
  Let $n \in \{1, \dots, \grade\}$ and define the matrices $B, C, D \in
  \reals^{(\mdim - \sdim) \times (\mdim - \sdim)}$ as
  \begin{align}\label{eq:matrices-for-rhos-identity}
    B = \diag(\weight_{\sdim + 1}, \dots, \weight_\mdim), && C_{ij} = \prexipol_{n - 1}^{j}(\lambda_{i}), && D = \diag\!\Big(\|\prexipol_{n - 1}^{\sdim + 1}\|^2, \dots, \|\prexipol_{n - 1}^{\mdim}\|^2\Big)
  \end{align}
  for $i, j \in \{\sdim + 1, \dots, \mdim\}$.
  Also define the vector $w = \big(\rhopol_n(\lambda_{\sdim + 1}), \dots,
  \rhopol_n(\lambda_\mdim)\big)^\top$.
  Then the matrix $D + B^2C$ is invertible and
  \begin{align}\label{eq:rho-identity}
    \trhopol_n = \rhopol_n - \sum_{j = \sdim + 1}^\mdim \coef_j \prexipol_{n - 1}^j,
  \end{align}
  where $\coef = (D + B^2C)^{-1}B^2w$.
\end{lemma}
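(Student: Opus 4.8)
The plan is to verify that the polynomial $\rhopol_n - \sum_{j=\sdim+1}^{\mdim}\coef_j\,\prexipol_{n-1}^j$ on the right-hand side of~\eqref{eq:rho-identity} is exactly $\trhopol_n$, using the variational characterization of the \lanczos{} polynomials. First I would invoke Lemma~\ref{lemma:rho-basic-props} and Lemma~\ref{lemma:rho-opt-cond} with the semi-inner product $\inner{\cdot}{\cdot}_\sim$ in place of $\inner{\cdot}{\cdot}$: this shows $\trhopol_n$ is the \emph{unique} monic polynomial of degree $n$ with $\inner{q}{\trhopol_n}_\sim = 0$ for every $q\in\pols{n-1}$. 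Since $\rhopol_n$ is monic of degree $n$ (Lemma~\ref{lemma:rho-basic-props} for $(\mat,\weight)$) and each $\prexipol_{n-1}^j$ lies in $\pols{n-1}$ --- it is well defined precisely because $n-1<\grade$, which holds as $n\le\grade$ --- the candidate is automatically monic of degree $n$ for \emph{any} vector $\coef$. So the proof splits into two tasks: (a) show $D+B^2C$ is invertible, so $\coef=(D+B^2C)^{-1}B^2w$ makes sense; and (b) check that this particular $\coef$ makes the candidate $\inner{\cdot}{\cdot}_\sim$-orthogonal to $\pols{n-1}$, after which uniqueness gives~\eqref{eq:rho-identity}.

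For (b), I would fix $q\in\pols{n-1}$, write $\mathbf q=(q(\lambda_{\sdim+1}),\dots,q(\lambda_{\mdim}))^\top$, and expand $\inner{q}{\rhopol_n-\sum_j\coef_j\prexipol_{n-1}^j}_\sim$ by splitting the semi-inner product as in~\eqref{eq:both-semi-inner-product} into $\inner{\cdot}{\cdot}$ plus the tail $\sum_{i=\sdim+1}^{\mdim}\weight_i^2(\cdot)(\lambda_i)(\cdot)(\lambda_i)$. The $\inner{\cdot}{\cdot}$-part reduces to $-\mathbf q^\top D\coef$, using $\inner{q}{\rhopol_n}=0$ from Lemma~\ref{lemma:rho-opt-cond} and $\inner{q}{\prexipol_{n-1}^j}=q(\lambda_j)\|\prexipol_{n-1}^j\|^2$ from Lemma~\ref{lemma:prexi-opt-cond} (with $\pxplambda=\lambda_j$), together with the definition of $D$. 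The tail part contributes $\mathbf q^\top B^2 w$ from $\rhopol_n$ and $-\mathbf q^\top B^2 C\coef$ from the $\prexipol$'s, reading off $B,C,w$ from~\eqref{eq:matrices-for-rhos-identity}. Altogether $\inner{q}{\cdot}_\sim=\mathbf q^\top\!\big(B^2w-(D+B^2C)\coef\big)$, which vanishes by the definition of $\coef$ --- crucially for \emph{every} $q\in\pols{n-1}$, with no need for the vectors $\mathbf q$ to span $\reals^{\mdim-\sdim}$.

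The main obstacle is (a), the invertibility of $D+B^2C$, which I would settle by a kernel argument leaning on the positive definiteness of $\inner{\cdot}{\cdot}$ on $\pols{\grade-1}$. Suppose $(D+B^2C)v=0$ and set $\psi=\sum_{j=\sdim+1}^{\mdim}v_j\prexipol_{n-1}^j\in\pols{n-1}\subseteq\pols{\grade-1}$. Coordinate $i$ of the relation reads $\|\prexipol_{n-1}^i\|^2 v_i+\weight_i^2\psi(\lambda_i)=0$. Multiplying coordinate $i$ by $\psi(\lambda_i)$, summing over $i\in\{\sdim+1,\dots,\mdim\}$, and using $\inner{\psi}{\psi}=\sum_i v_i\|\prexipol_{n-1}^i\|^2\psi(\lambda_i)$ (a direct consequence of Lemma~\ref{lemma:prexi-opt-cond} applied to $\psi$), one gets $\inner{\psi}{\psi}=-\sum_{i=\sdim+1}^{\mdim}\weight_i^2\psi(\lambda_i)^2\le 0$. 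Positive definiteness of $\inner{\cdot}{\cdot}$ on $\pols{\grade-1}$ then forces $\psi=0$, and each coordinate relation forces $v_i=0$ because $\|\prexipol_{n-1}^i\|^2>0$ (the polynomial $\prexipol_{n-1}^i$ is nonzero, as $\prexipol_{n-1}^i(\lambda_i)=1$, and has degree $\le\grade-1$); hence $v=0$. The one genuinely non-routine move is spotting that the kernel relation pins $\inner{\psi}{\psi}$ to a manifestly non-positive value, contradicting positive semidefiniteness; everything else is bookkeeping with the optimality conditions~\eqref{eq:rho-opt-cond} and~\eqref{eq:prexi-opt-cond}.
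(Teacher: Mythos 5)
Your proof is correct, and it takes a genuinely different route from the paper's.

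The paper \emph{derives} the identity: it expands $\rhopol_n$ and each $\prexipol_{n-1}^k$ in the $\inner{\cdot}{\cdot}_\sim$-orthogonal basis $\{\trhopol_0,\dots,\trhopol_n\}$, introduces auxiliary polynomials $\pol_j=\sum_{i<n}\trhopol_i(\lambda_j)\trhopol_i/\|\trhopol_i\|_\sim^2$, collects the two resulting identities in matrix form, and eliminates the $\pol_j$'s. For invertibility of $M=D+B^2C$ it writes $C=D^{-1}G$ with $G$ the Gram matrix of the $\prexipol_{n-1}^j$'s (extracted exactly as you do, via Lemma~\ref{lemma:prexi-opt-cond}), and then applies the Woodbury formula: $M$ is invertible because $I+BD^{-1}GD^{-1}B$ is a perturbation of $I$ by a PSD matrix.

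You instead \emph{verify} the identity: the candidate $\rhopol_n-\sum_j\coef_j\prexipol_{n-1}^j$ is monic of degree $n$ for any $\coef$, and you check directly that the specific $\coef=(D+B^2C)^{-1}B^2w$ makes it $\inner{\cdot}{\cdot}_\sim$-orthogonal to all of $\pols{n-1}$; uniqueness of the monic degree-$n$ polynomial with that orthogonality property then clinches it. (One small gap worth stating explicitly: Lemma~\ref{lemma:rho-opt-cond} characterizes $\trhopol_n$ by the stronger condition $\inner{\pol}{\trhopol_n}_\sim=\polcoef{n}{\pol}\|\trhopol_n\|_\sim^2$ for all $\pol\in\pols{n}$, but for a monic degree-$n$ polynomial that stronger condition is equivalent to orthogonality to $\pols{n-1}$, and uniqueness follows because $\|\cdot\|_\sim$ is positive definite on $\pols{n-1}$.) For invertibility you run a kernel argument at the polynomial level: from $(D+B^2C)v=0$ you form $\psi=\sum_j v_j\prexipol_{n-1}^j$, pair coordinate $i$ of the relation against $\psi(\lambda_i)$, and obtain $\inner{\psi}{\psi}=-\sum_i\weight_i^2\psi(\lambda_i)^2\le0$, forcing $\psi=0$ and then $v=0$. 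This exploits the same underlying structure as the paper ($G=DC\succeq0$, $D\succ0$) but avoids Woodbury entirely and is more elementary. The trade-off is that your route confirms a formula you already have to guess, while the paper's orthogonal-expansion route motivates where the expression for $\coef$ comes from. Both are sound.
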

\begin{proof}
  Given $j \in \{\sdim + 1, \dots, \mdim\}$, define the polynomial
  \begin{align} \label{eq:pjinproof}
    \pol_j = \sum_{i = 0}^{n - 1}\frac{\trhopol_i(\lambda_j)}{\|\trhopol_i\|_\sim^2}\trhopol_i.
  \end{align}
  Decompose the polynomial $\rhopol_n$ in the orthogonal basis $\{\trhopol_0,
  \dots, \trhopol_n\}$ using Lemma~\ref{lemma:ortho-decomp} (instantiated with
  the inner product $\inner{\cdot}{\cdot}_\sim$ rather than
  $\inner{\cdot}{\cdot}$).
  This yields
  \begin{align*}
    \rhopol_n = \trhopol_n + \sum_{i = 0}^{n - 1} \frac{\inner{\rhopol_n}{\trhopol_i}_\sim}{\|\trhopol_i\|_\sim^2}\trhopol_i = \trhopol_n + \sum_{i = 0}^{n - 1} \frac{\trhopol_i}{\|\trhopol_i\|_\sim^2}\bigg(\inner{\rhopol_n}{\trhopol_i} + \sum_{j = \sdim + 1}^\mdim \weight_j^2 \rhopol_n(\lambda_j) \trhopol_i(\lambda_j)\bigg),
  \end{align*}
  where the second equality comes from the
  definition~\eqref{eq:both-semi-inner-product} of the inner product
  $\inner{\cdot}{\cdot}_\sim$.
  Observe that $\inner{\rhopol_n}{\trhopol_i} = 0$ for $i \in \{0, \dots, n -
  1\}$ because of the characterization of $\rhopol_n$
  in Lemma~\ref{lemma:rho-opt-cond}.
  This leads to
  \begin{align}\label{eq:intermediate-id-1}
    \rhopol_n = \trhopol_n + \sum_{j = \sdim + 1}^\mdim \weight_j^2\rhopol_n(\lambda_j)\pol_j
  \end{align}
  with the polynomials $\pol_j$ from~\eqref{eq:pjinproof}.
  Now let $k \in \{\sdim + 1, \dots, \mdim\}$ and decompose the polynomial
  $\prexipol_{n - 1}^k$ in the orthogonal basis $\{\trhopol_0, \dots,
  \trhopol_{n - 1}\}$ to obtain
  \begin{align*}
    \prexipol_{n - 1}^k = \sum_{i = 0}^{n - 1} \frac{\inner{\prexipol_{n - 1}^k}{\trhopol_i}_\sim}{\|\trhopol_i\|_\sim^2}\trhopol_i = \sum_{i = 0}^{n - 1} \frac{\trhopol_i}{\|\trhopol_i\|_\sim^2} \bigg( \inner{\prexipol_{n - 1}^k}{\trhopol_i} + \sum_{j = \sdim + 1}^\mdim \weight_j^2 \prexipol_{n - 1}^k(\lambda_j) \trhopol_i(\lambda_j) \bigg).
  \end{align*}
  Combining this with the characterization of $\prexipol_{n - 1}^k$
  in Lemma~\ref{lemma:prexi-opt-cond} gives %
  \begin{align}\label{eq:intermediate-id-2}
    \prexipol_{n - 1}^k = \|\prexipol_{n - 1}^k\|^2 \pol_k + \sum_{j = \sdim + 1}^\mdim \weight_j^2 \prexipol_{n - 1}^k(\lambda_j) \pol_j.
  \end{align}
  Collecting the identities~\eqref{eq:intermediate-id-1}
  and~\eqref{eq:intermediate-id-2} in matrix form yields
  \begin{align}\label{eq:collected-identities}
    \trhopol_n = \rhopol_n - \begin{bmatrix}
                               \pol_{\sdim + 1} & \dots & \pol_\mdim
                             \end{bmatrix} B^2 w
    && \text{and} &&
                     \begin{bmatrix}
                       \prexipol_{n - 1}^{\sdim + 1} & \dots & \prexipol_{n - 1}^\mdim
                     \end{bmatrix} = \begin{bmatrix}
                                       \pol_{\sdim + 1} & \dots & \pol_\mdim
                                     \end{bmatrix}(D + B^2C),
  \end{align}
  where $B, C, D$ and $w$ are defined around~\eqref{eq:matrices-for-rhos-identity}.

  We now show that $M = D + B^2C$ is invertible.
  First, notice that for all $j \in \{\sdim + 1, \dots, \mdim\}$, the quantity
  $\|\prexipol_{n - 1}^j\|^2$ is positive because $n - 1 < \grade$.
  It follows that $D$ is positive definite.
  Let $G$ be the symmetric $(\mdim - \sdim) \times (\mdim - \sdim)$ Gram matrix
defined as $G_{ij} = \inner{\prexipol_{n - 1}^i}{\prexipol_{n - 1}^j}$.
  The characterizations of the polynomials $\prexipol_{n - 1}^i$ and
  $\prexipol_{n - 1}^j$ in~\eqref{eq:prexi-opt-cond} give $G_{ij} = \prexipol_{n
  - 1}^j(\lambda_i) \|\prexipol_{n - 1}^i\|^2$, or equivalently, $C = D^{-1}G$.
  According to the Woodbury formula, $M = D + B \cdot (BC)$ is invertible if the
  matrix $I + B C D^{-1} B = I + B D^{-1} G D^{-1} B$ is invertible.
  It is the case because the second term is positive semidefinite.
  Finally, substituting the polynomials $\pol_{\sdim + 1}, \dots, \pol_\mdim$ in
  the first equality of~\eqref{eq:collected-identities} by those of the second
  equality gives the result.
\end{proof}

\paragraph{Controlling the \cg{} polynomials.}

We now leverage the identity~\eqref{eq:rho-identity} in Lemma~\ref{lemma:identity-rhos}
to obtain relationships between the iterates of \cg{}
for the inputs $(\tmat, \tweight)$ and $(\mat, \weight)$.
To this end, we assume from now on that
\begin{align*}
  \lambda_1 \geq \dots \geq \lambda_\mdim, && \lambda_\sdim > 0 && \text{and} && \lambda_\sdim > \lambda_{\sdim + 1}.
\end{align*}
(We order the eigenvalues, and suppose that $\mat \succ \zeros$ and that there is
an eigenvalue gap.)
As a result, the roots of $\rhopol_n$ and $\prexipol_{n - 1}^j$ are all positive
for $n \in \{1, \dots, \grade\}$ (Lemma~\ref{lemma:rho-basic-props} and
Lemma~\ref{lemma:roots-interlace}).
This ensures in particular that the polynomials $\qpol_n = \rhopol_n /
\rhopol_n(0)$ are well defined (recall from Section~\ref{par:connections-cg}
that these polynomials determine the iterates of \cg{}).
In order to understand the $n$th iterate of \cg{} on $(\tmat, \tweight)$, we
proceed in two steps: \emph{(i)} we ensure that the iterate is indeed well
defined, meaning that the roots of $\trhopol_n$ are positive, and \emph{(ii)} we
relate the polynomial $\tqpol_n = \trhopol_n / \trhopol_n(0)$ to $\qpol_n$.

To do this, we first rewrite the identity~\eqref{eq:rho-identity} in
a way that is more comfortable to manipulate the aforementioned rescaled
polynomials $\qpol_n$ and $\tqpol_n$.
We introduce
\begin{align}\label{eq:def-xi-pols}
  \xipol_n^j = \frac{\prexipol_n^j}{\prexipol_n^j(0)} \quad\text{for all $n \in \{0, \dots, \grade - 1\}$ and $j \in \{\sdim + 1, \dots, \mdim\}$},
\end{align}
where the polynomials $\prexipol_n^j$ are as in Definition~\ref{def:prexipol-j}.
Remember that Table~\ref{tab:pols} summarizes the relationships between all the
polynomials defined above.
(Again, for convenience, $B, C, D \in \reals^{(\mdim - \sdim) \times (\mdim -
  \sdim)}$ and $w, \coef \in \reals^{\mdim - \sdim}$ below are indexed from
$\sdim + 1$ to $\mdim$. They also depend on the iteration $n$.)

\begin{lemma}\label{lemma:identity-rhos-2}
  Suppose that $\lambda_\sdim > 0$ and $\lambda_\sdim > \lambda_{\sdim + 1}$.
  Let $n \in \{1, \dots, \grade\}$ and define the matrices
  \begin{align*}
    B = \diag(\weight_{\sdim + 1}, \dots, \weight_\mdim), && C_{ij} = \xipol_{n - 1}^{j}(\lambda_{i}), && D = \diag\!\bigg(\frac{\|\xipol_{n - 1}^{\sdim + 1}\|^2}{\xipol_{n - 1}^{\sdim + 1}(\lambda_{\sdim + 1})}, \dots, \frac{\|\xipol_{n - 1}^{\mdim}\|^2}{\xipol_{n - 1}^\mdim(\lambda_\mdim)}\bigg),
  \end{align*}
  for $i, j \in \{\sdim + 1, \dots, \mdim\}$, and where $\xipol_{n - 1}^{d +
    j}$ are as in~\eqref{eq:def-xi-pols}.
  Also define $w = \big(\qpol_n(\lambda_{\sdim + 1}), \dots,
  \qpol_n(\lambda_\mdim)\big)^\top$.
  With $\coef = (D + B^2C)^{-1}B^2w$, we have
  \begin{align}\label{eq:rho-identity-2}
    \trhopol_n = \rhopol_n - \rhopol_n(0) \sum_{j = \sdim + 1}^\mdim \coef_j \xipol_{n - 1}^j.
  \end{align}
\end{lemma}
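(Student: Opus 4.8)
The plan is to obtain \eqref{eq:rho-identity-2} by rescaling the quantities that already appear in Lemma~\ref{lemma:identity-rhos}. That lemma gives
\[
  \trhopol_n = \rhopol_n - \sum_{j = \sdim + 1}^\mdim \coef_j^{\mathrm{old}} \prexipol_{n - 1}^j,
  \qquad \coef^{\mathrm{old}} = (D^{\mathrm{old}} + B^2 C^{\mathrm{old}})^{-1} B^2 w^{\mathrm{old}},
\]
with $C^{\mathrm{old}}_{ij} = \prexipol_{n-1}^j(\lambda_i)$, $D^{\mathrm{old}} = \diag(\|\prexipol_{n-1}^j\|^2)_j$, and $w^{\mathrm{old}} = (\rhopol_n(\lambda_j))_j$. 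The hypotheses $\lambda_\sdim > 0$ and $\lambda_\sdim > \lambda_{\sdim+1}$ guarantee (via Lemma~\ref{lemma:rho-basic-props} and Lemma~\ref{lemma:roots-interlace}, with $\pxplambda = \lambda_j \le \lambda_{\sdim+1} < \lambda_\sdim$) that the roots of $\rhopol_n$ and of each $\prexipol_{n-1}^j$ are strictly positive, so $\rhopol_n(0) \ne 0$ and $\prexipol_{n-1}^j(0) \ne 0$; hence $\qpol_n = \rhopol_n/\rhopol_n(0)$ and $\xipol_{n-1}^j = \prexipol_{n-1}^j/\prexipol_{n-1}^j(0)$ in \eqref{eq:def-xi-pols} are well defined. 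I would open the proof by recording this well-definedness.

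Next I would substitute $\prexipol_{n-1}^j = \prexipol_{n-1}^j(0)\,\xipol_{n-1}^j$ and $\rhopol_n = \rhopol_n(0)\,\qpol_n$ throughout the identity, pulling the scalars $\prexipol_{n-1}^j(0)$ into a diagonal matrix $E = \diag(\prexipol_{n-1}^j(0))_j$ and $\rhopol_n(0)$ out in front. Writing the new $C, D, w$ of the present lemma in terms of the old ones: $C^{\mathrm{old}} = C E$ (since $\prexipol_{n-1}^j(\lambda_i) = \xipol_{n-1}^j(\lambda_i)\,\prexipol_{n-1}^j(0)$ and the $j$-index is the column), $w^{\mathrm{old}} = \rhopol_n(0)\, w$ (using $\qpol_n(\lambda_j) = \rhopol_n(\lambda_j)/\rhopol_n(0)$), and $D^{\mathrm{old}} = D \cdot (\text{something}) \cdot E^2$ — here I need to check that $\|\prexipol_{n-1}^j\|^2 = (\prexipol_{n-1}^j(0))^2 \|\xipol_{n-1}^j\|^2$ and that $D$ as defined in the present lemma, with entries $\|\xipol_{n-1}^j\|^2/\xipol_{n-1}^j(\lambda_j)$, matches up after the rescaling. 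This last point is the one bookkeeping subtlety: the present $D$ carries an extra factor $1/\xipol_{n-1}^j(\lambda_j)$ relative to a naive rescaling, and I expect this is exactly what is needed so that $D^{\mathrm{old}} = E \cdot D \cdot E \cdot \diag(\xipol_{n-1}^j(\lambda_j))_j$ — i.e., the extra factor cancels against a factor hidden in how $C^{\mathrm{old}}$ relates to $C$ when one also uses the symmetry relation $\prexipol_{n-1}^j(\lambda_i)\|\prexipol_{n-1}^i\|^2 = \prexipol_{n-1}^i(\lambda_j)\|\prexipol_{n-1}^j\|^2$ from the Gram-matrix computation in the proof of Lemma~\ref{lemma:identity-rhos} (the identity $C = D^{-1}G$ there). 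So the main obstacle is purely algebraic: tracking these diagonal rescalings so that $(D^{\mathrm{old}} + B^2 C^{\mathrm{old}})^{-1} B^2 w^{\mathrm{old}}$ collapses, after factoring $E$ out on the right, to $\rhopol_n(0)\,(D + B^2 C)^{-1} B^2 w$ with the $D, C, w$ exactly as stated in the present lemma.

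Finally, assembling these substitutions: $\sum_j \coef_j^{\mathrm{old}} \prexipol_{n-1}^j = \sum_j \coef_j^{\mathrm{old}} \prexipol_{n-1}^j(0)\, \xipol_{n-1}^j = \sum_j (E \coef^{\mathrm{old}})_j\, \xipol_{n-1}^j$, and I would show $E \coef^{\mathrm{old}} = \rhopol_n(0)\,\coef$ with $\coef = (D + B^2 C)^{-1} B^2 w$; combined with $\rhopol_n = \rhopol_n(0)\,\qpol_n$ on the reference side, Lemma~\ref{lemma:identity-rhos} becomes exactly \eqref{eq:rho-identity-2}. Invertibility of $D + B^2 C$ need not be re-proven from scratch: it follows from the invertibility of $D^{\mathrm{old}} + B^2 C^{\mathrm{old}}$ established in Lemma~\ref{lemma:identity-rhos} together with the fact that the two matrices differ by invertible diagonal pre- and post-multiplications (by $E$, $E^{-1}$, and $\diag(\xipol_{n-1}^j(\lambda_j))_j$, all nonsingular by the strict positivity of the roots and the fact that $\lambda_j \ne 0$ is not a root of $\prexipol_{n-1}^j$). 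The whole argument is essentially "Lemma~\ref{lemma:identity-rhos}, now in normalized coordinates."
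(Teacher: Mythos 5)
Your plan is correct and follows essentially the same route as the paper: view the identity as a rescaling of the one in Lemma~\ref{lemma:identity-rhos} through the diagonal matrix $E = \diag(\prexipol_{n-1}^j(0))_j$ (the paper calls it $S$), track how $C$, $D$, $w$, $\coef$ transform, and substitute. Your relations $C^{\mathrm{old}} = CE$, $w^{\mathrm{old}} = \rhopol_n(0)\,w$, and $E\coef^{\mathrm{old}} = \rhopol_n(0)\,\coef$ all agree with the paper, and the well-definedness remark about positive roots and nonvanishing at $0$ is a good place to start.

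One small cleanup: you make the $D$ rescaling sound subtler than it is. There is no need to invoke the Gram-matrix symmetry $\prexipol_{n-1}^j(\lambda_i)\|\prexipol_{n-1}^i\|^2 = \prexipol_{n-1}^i(\lambda_j)\|\prexipol_{n-1}^j\|^2$. Using only $\|\xipol_{n-1}^j\|^2 = \|\prexipol_{n-1}^j\|^2/\prexipol_{n-1}^j(0)^2$ and $\xipol_{n-1}^j(\lambda_j) = 1/\prexipol_{n-1}^j(0)$ (the latter because $\prexipol_{n-1}^j(\lambda_j) = 1$), the lemma's $D_{jj} = \|\xipol_{n-1}^j\|^2/\xipol_{n-1}^j(\lambda_j)$ simplifies directly to $\|\prexipol_{n-1}^j\|^2/\prexipol_{n-1}^j(0)$, so $D^{\mathrm{old}} = DE$. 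Your formula $D^{\mathrm{old}} = E\,D\,E\,\diag\!\big(\xipol_{n-1}^j(\lambda_j)\big)_j$ happens to equal $DE$ because $E\,\diag\!\big(\xipol_{n-1}^j(\lambda_j)\big)_j = I$, but the detour is unnecessary. Once you have $D^{\mathrm{old}} + B^2 C^{\mathrm{old}} = (D + B^2 C)E$, the rest goes through exactly as you outline, and invertibility of $D + B^2 C$ follows from the invertibility established in Lemma~\ref{lemma:identity-rhos} and the invertibility of $E$, as you note.
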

\begin{proof}
  This is just a rescaled variant of~\eqref{eq:rho-identity}, made possible by
  the assumptions on $\lambda_\sdim$ and $\lambda_{\sdim+1}$.
  Notice that $B$ is unchanged but $C, D, w$ and $\coef$ are all rescaled.
  Let $\hat C, \hat D, \hat w$ and $\hat \coef$ denote their counterparts in
  Lemma~\ref{lemma:identity-rhos}.
  Then $\hat C = C S$ where $S = \diag\!\big(\prexipol_{n - 1}^{\sdim + 1}(0), \dots,
  \prexipol_{n - 1}^\mdim(0)\big)$ is a scaling matrix.
  We leverage now that $\xipol_{n - 1}^j(\lambda_j) = 1/\prexipol_{n - 1}^j(0)$
  and $\|\xipol_{n - 1}^j\|^2/\xipol_{n - 1}^j(\lambda_j) = \|\prexipol_{n -
    1}^j\|^2/\prexipol_{n - 1}^j(0)$ for all $j \in \{\sdim + 1, \dots,
  \mdim\}$.
  This yields $\hat D = D S$.
  Finally, $\hat w = \rhopol_n(0) w$, and combining the above relations gives
  $\hat \coef = \rhopol_n(0) S^{-1} \coef$.
  Plugging it in~\eqref{eq:rho-identity} leads to
  identity~\eqref{eq:rho-identity-2}.
\end{proof}

For $n \leq \grade$, the $n$th iterate of \cg{} on $(\tmat, \tweight)$ is well
defined if (and only if) the roots of $\trhopol_n$ are all positive (see
Section~\ref{subsec:cg-background}).
Consequently, we now study the roots of $\trhopol_n$.
We prove that they are close to the roots of $\rhopol_n$ when the entries of
$\coef$ (defined in Lemma~\ref{lemma:identity-rhos-2}) are small.
For this, given a root $z$ of $\trhopol_n$, we exhibit a root $\gamma$ of
$\rhopol_n$ such that the ratio $z/\gamma$ is close to 1.
The key is that the roots of the polynomials $\xipol_{n - 1}$ and $\rhopol_n$
interlace (by~\eqref{eq:def-xi-pols} and Lemma~\ref{lemma:roots-interlace}).
This will yield a lower-bound on the roots of $\trhopol_n$.
\begin{figure}[t]
  \centering
  \includegraphics[width=1\textwidth]{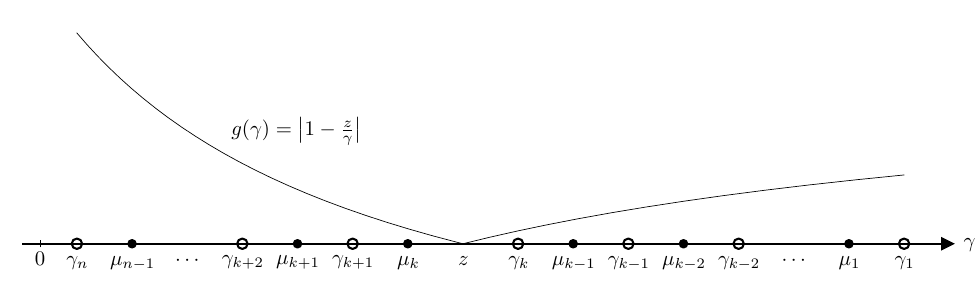}
  \caption{This figure supports the proof of Lemma~\ref{lemma:bound-roots}.
      The point $z$ may be anywhere between $\gamma_{k + 1}$ and $\gamma_{k - 1}$.
      Here, it is represented between $\mu_k$ and $\gamma_k$,
      but the proof considers all scenarios.
  }\label{fig:roots}
\end{figure}

\begin{lemma}\label{lemma:bound-roots}
  Suppose that $\lambda_\sdim > 0$ and $\lambda_\sdim > \lambda_{\sdim + 1}$.
  Let $\gamma_1 > \dots > \gamma_n$ be the roots of $\rhopol_n$ with $n \leq
  \grade$.
  If $z$ is a root of $\trhopol_n$ then
  \begin{align*}
    \min_{i = 1, \dots, n} \Big|1 - \frac{z}{\gamma_i}\Big| \leq \|\coef\|_1,
  \end{align*}
  where $\coef$ (which depends on $n$) is as in
  Lemma~\ref{lemma:identity-rhos-2}, and $\|\coef\|_1 = |\coef_{\sdim+1}| +
  \cdots + |\coef_{\mdim}|$ is its $1$-norm.
\end{lemma}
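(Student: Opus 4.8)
The plan is to evaluate the identity~\eqref{eq:rho-identity-2} at the root $z$ and read the bound off from the interlacing of the roots of the polynomials $\xipol_{n-1}^j$ with those of $\rhopol_n$. I would first dispose of the degenerate case: if $z = \gamma_i$ for some $i$ then $\min_i|1-z/\gamma_i| = 0 \le \|\coef\|_1$ and there is nothing to do, so assume $z \notin \{\gamma_1,\dots,\gamma_n\}$. Under the hypotheses $\lambda_\sdim > 0$ and $\lambda_\sdim > \lambda_{\sdim+1}$, Lemma~\ref{lemma:rho-basic-props} places $\gamma_1,\dots,\gamma_n$ in $\interval{\lambda_\sdim}{\lambda_1}$, so they are strictly positive and $\rhopol_n(0) = \prod_i(-\gamma_i) \neq 0$; and, since $\lambda_j \le \lambda_{\sdim+1} < \lambda_\sdim$ for $j > \sdim$, Lemma~\ref{lemma:roots-interlace} gives that each $\xipol_{n-1}^j$ has degree $n-1$ with real roots $\mu_1^j > \dots > \mu_{n-1}^j$ in $\interval{\lambda_\sdim}{\lambda_1}$ interlacing the $\gamma$'s, namely $\gamma_1 > \mu_1^j > \gamma_2 > \dots > \mu_{n-1}^j > \gamma_n$. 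The roots of $\trhopol_n$ are real (Lemma~\ref{lemma:rho-basic-props} for $(\tmat,\tweight)$), so $z$ is real. Writing $\rhopol_n$ and $\xipol_{n-1}^j$ in factored form (using $\xipol_{n-1}^j(0)=1$) and using $\trhopol_n(z) = 0$, identity~\eqref{eq:rho-identity-2} becomes
\begin{align*}
  \prod_{i=1}^{n}\Big(1-\frac{z}{\gamma_i}\Big) = \frac{\rhopol_n(z)}{\rhopol_n(0)} = \sum_{j=\sdim+1}^{\mdim}\coef_j\prod_{m=1}^{n-1}\Big(1-\frac{z}{\mu_m^j}\Big).
\end{align*}

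The heart of the argument is the estimate: fixing any index $i_\star$ attaining $\min_i|1-z/\gamma_i|$, for every $j$ one has $\big|\prod_m(1-z/\mu_m^j)\big| \le \prod_{i\neq i_\star}|1-z/\gamma_i|$. To prove it I would introduce the order-preserving bijection $\sigma$ from $\{1,\dots,n-1\}$ onto $\{1,\dots,n\}\setminus\{i_\star\}$ (so $\sigma(m)=m$ for $m<i_\star$ and $\sigma(m)=m+1$ for $m\ge i_\star$) and show $|1-z/\mu_m^j| \le |1-z/\gamma_{\sigma(m)}|$ for each $m$; the estimate then follows by multiplying over $m$. This rests on two elementary observations. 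First, $t\mapsto|1-z/t|$ is quasiconvex on $\interval[open]{0}{\infty}$ (monotone for $z\le 0$; strictly decreasing on $\interval[open]{0}{z}$ and strictly increasing on $\interval[open]{z}{\infty}$ for $z>0$), so its maximum over a closed subinterval is attained at an endpoint; applied to $\mu_m^j\in\interval[open]{\gamma_{m+1}}{\gamma_m}$ this gives $|1-z/\mu_m^j| \le \max\{|1-z/\gamma_m|,\,|1-z/\gamma_{m+1}|\}$. Second, $i\mapsto\gamma_i$ is strictly decreasing, so $i\mapsto|1-z/\gamma_i|$ is quasiconvex (hence unimodal) with minimizer set an interval containing $i_\star$; since the element of $\{m,m+1\}$ other than $\sigma(m)$ lies between $i_\star$ and $\sigma(m)$, unimodality forces $\max\{|1-z/\gamma_m|,|1-z/\gamma_{m+1}|\} = |1-z/\gamma_{\sigma(m)}|$, and combining the two observations gives the claimed inequality.

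Granting the estimate, the triangle inequality applied to the displayed identity yields
\begin{align*}
  \prod_{i=1}^{n}\Big|1-\frac{z}{\gamma_i}\Big| = \bigg|\sum_{j}\coef_j\prod_m\Big(1-\frac{z}{\mu_m^j}\Big)\bigg| \le \Big(\sum_j|\coef_j|\Big)\prod_{i\neq i_\star}\Big|1-\frac{z}{\gamma_i}\Big| = \|\coef\|_1\prod_{i\neq i_\star}\Big|1-\frac{z}{\gamma_i}\Big|,
\end{align*}
and since $z\notin\{\gamma_i\}$ the product $\prod_{i\neq i_\star}|1-z/\gamma_i|$ is strictly positive, so dividing through leaves $|1-z/\gamma_{i_\star}| \le \|\coef\|_1$, which is exactly $\min_i|1-z/\gamma_i|\le\|\coef\|_1$.

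The main obstacle is the estimate in the second paragraph, and specifically the matching for the single index $m$ whose bracketing interval $\interval[open]{\gamma_{m+1}}{\gamma_m}$ contains $z$ (the configuration drawn in Figure~\ref{fig:roots}): there $|1-z/\mu_m^j|\le|1-z/\gamma_{\sigma(m)}|$ genuinely uses that $i_\star$ is a minimizer. Phrasing both observations through quasiconvexity is what lets me avoid a separate treatment of the case $z<0$, which does occur because the roots of $\trhopol_n$ need only lie in $\interval{\lambda_\mdim}{\lambda_1}$ and $\lambda_\mdim$ may be nonpositive.
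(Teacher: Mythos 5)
Your proof is correct and follows essentially the same route as the paper's: evaluate identity~\eqref{eq:rho-identity-2} at the root $z$, pass to the normalized factored forms $\qpol_n(z) = \sum_j \coef_j\, \xipol_{n-1}^j(z)$, and then show that each ratio $\prod_m g(\mu_m^j)/\prod_{i\neq i_\star} g(\gamma_i)$ is at most one, with $g(\gamma)=|1-z/\gamma|$, by matching roots via interlacing and the monotonicity of $g$ on either side of $z$. The only difference is presentational: the paper carries out that bound by a hands-on case analysis on the sign of $z$ and the position of $k=i_\star$ following the pattern~\eqref{eq:gammas-mus-interlace} (cf.\ Figure~\ref{fig:roots}), whereas you package the same matching through the explicit bijection $\sigma$ and a quasiconvexity/unimodality argument, which is a tidy way to treat $z\le 0$ and $z>0$ uniformly.
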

\begin{proof}
  If $z \in \{\gamma_1, \dots, \gamma_n\}$ then the result is clear.
  Otherwise, define the function $g \colon \gamma \mapsto |1 -
  \frac{z}{\gamma}|$ and let $k \in \argmin_{i = 1, \dots, n} g(\gamma_i)$.
  Let $\mu_1^j > \dots > \mu_{n - 1}^j$ denote the roots of $\xipol_{n - 1}^j$.
  Identity~\eqref{eq:rho-identity-2} yields
  \begin{align*}
    0 = \trhopol_n(z) = \prod_{i = 1}^n (z - \gamma_i) - \rhopol_n(0)\sum_{j = \sdim + 1}^\mdim \coef_j \prod_{i = 1}^{n - 1} \bigg(1 - \frac{z}{\mu_i^j} \bigg).
  \end{align*}
  Divide by $-\gamma_k \prod_{i \neq k} (z - \gamma_i)$ and apply the
  triangle inequality to obtain
  \begin{align*}
    g(\gamma_k) \leq \sum_{j = \sdim + 1}^\mdim |\coef_j| h_j && \text{where} && h_j = \frac{\prod_{i = 1}^{n - 1} g(\mu_i^j)}{\prod_{i \neq k} g(\gamma_i)}.
  \end{align*}
  We now let $j \in \{\sdim + 1, \dots, \mdim\}$ and show that $h_j \leq 1$.
  This is clear when $z = 0$ so we consider $z \neq 0$.
  Under our assumptions, the roots of $\rhopol_n$ and $\xipol_{n - 1}^j$
  interlace and are positive (as shown in Lemmas~\ref{lemma:rho-basic-props}
  and~\ref{lemma:roots-interlace}).
  First assume that $z < 0$.
  In this case the function $g$ is decreasing on $\interval[open]{0}{\infty}$
  and it follows that $k = 1$.
  Together with the interlacement of the roots, this implies that $h_j \leq 1$.
  Now assume that $z > 0$.
  The function $g$ is decreasing on $\interval[open left]{0}{z}$ and increasing
  on $\interval[open right]{z}{+\infty}$.
  This implies that $\gamma_{k + 1} < z < \gamma_{k - 1}$ (we set $\gamma_0 =
  +\infty$ and $\gamma_{n + 1} = 0$).
  We obtain the following pattern for the roots:
  \begin{equation}\label{eq:gammas-mus-interlace}
    \begin{aligned}
      0 <\gamma_n^{\phantom{j}} < \mu_{n - 1}^j < \dots < \gamma_{k + 2}^{\phantom{j}} < \mu_{k + 1}^j <& \gamma_{k + 1}^{\phantom{j}} < z_{\vphantom{k}}^{\vphantom{j}} < \gamma_{k - 1}^{\phantom{j}} < \mu_{k - 2}^j < \gamma_{k - 2}^{\phantom{j}} < \dots < \mu_1^j < \gamma_1^{\phantom{j}}\\
      \text{and} \qquad\qquad &\gamma_{k + 1}^{\phantom{j}} < \mu_k^j < \gamma_k^{\phantom{j}} < \mu_{k - 1}^j < \gamma_{k - 1}^{\phantom{j}}.
    \end{aligned}
  \end{equation}
  Figure~\ref{fig:roots} is an illustration of this interlacement and of $g$.
  Observe from~\eqref{eq:gammas-mus-interlace} that $g(\gamma_i) \geq g(\mu_{i -
    1}^j)$ for all $i \in \{n, \dots, k + 2\}$ because $g$ is decreasing on
  $\interval[open left]{0}{z}$.
  Likewise, we have $g(\mu_i^j) \leq g(\gamma_i)$ for all $i \in \{k - 2, \dots,
  1\}$ because $g$ is increasing on $\interval[open right]{z}{+\infty}$.
  From the monotonicity pattern of $g$ we deduce that $g(\mu_k^j) \leq
  \max(g(\gamma_{k + 1}), g(\gamma_k))$.
  To see this observe that it holds whether $z \in \interval[open
  left]{\gamma_{k + 1}}{\mu_k^j}$, $z \in \interval{\mu_k^j}{\gamma_k}$ or $z
  \geq \gamma_k$.
  The minimality of $\gamma_k$ then implies that $g(\mu_k^j) \leq g(\gamma_{k +
    1})$.
  Likewise, $g(\mu_{k - 1}^j) \leq \max(g(\gamma_k), g(\gamma_{k - 1})) =
  g(\gamma_{k - 1})$.
  Combining all the previous inequalities yields $h_j \leq 1$.
\end{proof}

If we additionally assume $\|\coef\|_1 < 1$, we have a useful corollary.

\begin{corollary}\label{cor:roots-lower-bound}
  Suppose that $\lambda_\sdim > 0$ and $\lambda_\sdim > \lambda_{\sdim + 1}$.
  If $\|\coef\|_1 < 1$ for some $n \leq \grade$, then the roots of $\trhopol_n$
  are lower-bounded by $\big(1 - \|\coef\|_1\big) \gamma_n$.
  In particular, they are positive.
\end{corollary}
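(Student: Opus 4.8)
The plan is to read this off directly from Lemma~\ref{lemma:bound-roots}, which already does all the real work; the corollary is just a clean packaging of it under the extra hypothesis $\|\coef\|_1 < 1$. Fix $n \leq \grade$ with $\|\coef\|_1 < 1$ and let $z$ be an arbitrary root of $\trhopol_n$. First I would note that $z$ is real: applying Lemma~\ref{lemma:rho-basic-props} to the instance $(\tmat, \tweight)$ shows that all $n$ roots of $\trhopol_n$ are real and distinct, so $1 - z/\gamma_i$ is a genuine real number for each $i$. By Lemma~\ref{lemma:bound-roots}, there exists an index $i \in \{1, \dots, n\}$ with $|1 - z/\gamma_i| \leq \|\coef\|_1$.

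Next I would invoke positivity of the $\gamma_i$: since $\lambda_\sdim > 0$, Lemma~\ref{lemma:rho-basic-props} places the roots of $\rhopol_n$ in $[\lambda_\sdim, \lambda_1] \subset (0, \infty)$. The inequality $|1 - z/\gamma_i| \leq \|\coef\|_1 < 1$ then forces $z/\gamma_i \geq 1 - \|\coef\|_1 > 0$, hence $z \geq (1 - \|\coef\|_1)\gamma_i$. Because $\gamma_n$ is the smallest of $\gamma_1 > \dots > \gamma_n$ and the factor $1 - \|\coef\|_1$ is strictly positive, this yields $z \geq (1 - \|\coef\|_1)\gamma_i \geq (1 - \|\coef\|_1)\gamma_n > 0$. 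Since $z$ was an arbitrary root of $\trhopol_n$, every root is bounded below by $(1 - \|\coef\|_1)\gamma_n$, and in particular all roots are positive.

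There is essentially no obstacle: the hypotheses $\lambda_\sdim > 0$ and $\lambda_\sdim > \lambda_{\sdim+1}$ assumed in the corollary are exactly those needed to apply Lemma~\ref{lemma:bound-roots}, and the remaining deduction is elementary. The only point that requires a moment's care is the passage from the \emph{nearest} root $\gamma_i$ (which is all Lemma~\ref{lemma:bound-roots} supplies) to the \emph{smallest} root $\gamma_n$: this step uses both $\gamma_i \geq \gamma_n$ and the strict positivity of $1 - \|\coef\|_1$, which is precisely where the additional assumption $\|\coef\|_1 < 1$ is consumed.
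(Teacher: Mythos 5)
Your proof is correct and follows essentially the same approach as the paper: invoke Lemma~\ref{lemma:bound-roots} to find some $\gamma_i$ with $|1 - z/\gamma_i| \leq \|\coef\|_1 < 1$, unfold the absolute value using $\gamma_i > 0$, and chain $\gamma_i \geq \gamma_n > 0$. The explicit remark that $z$ is real (via Lemma~\ref{lemma:rho-basic-props} applied to $(\tmat,\tweight)$, noting its grade is at least $\grade$) is a small clarification the paper leaves implicit but is certainly valid.
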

\begin{proof}
  Let $z$ be a root of $\trhopol_n$ and $k \in \argmin_{i = 1, \dots, n} \big|1 -
  \frac{z}{\gamma_i}\big|$.
  Lemma~\ref{lemma:bound-roots} gives that $\gamma_k - z \leq \gamma_k
  \|\coef\|_1$.
  Rearranging the terms gives the result, using that $\gamma_k \geq \gamma_n >
  0$.
\end{proof}

We now combine all the results above to obtain \emph{(i)} a sufficient condition
for the $n$th iteration of \cg{} on $(\tmat, \tweight)$ to be well defined, and,
when it is the case, \emph{(ii)} a relationship between $\tqpol_n$ and
$\qpol_n$.
Here we let $\ones$ denote the vector of all ones.

\begin{theorem}\label{th:identity-qs}
  Suppose that $\lambda_\sdim > 0$ and $\lambda_\sdim > \lambda_{\sdim + 1}$.
  Let $n \in \{1, \dots, \grade\}$ and $\coef$ as in
  Lemma~\ref{lemma:identity-rhos-2}.
  If $\|\coef\|_1 < 1$ then the $n$th iteration of \cg{} on $(\tmat, \tweight)$
  is well defined, and the associated polynomial $\tqpol_n =
  \trhopol_n/\trhopol_n(0)$ satisfies
  \begin{align}\label{eq:identity-qs}
    \tqpol_n = \frac{1}{1 - s}\bigg( \qpol_n - \sum_{j = \sdim + 1}^\mdim \coef_j \xipol_{n - 1}^j \bigg), && \text{where} && s = \ones^\top \coef.
  \end{align}
\end{theorem}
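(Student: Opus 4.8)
The plan is to deduce everything from identity~\eqref{eq:rho-identity-2} in Lemma~\ref{lemma:identity-rhos-2} by a straightforward rescaling, after first establishing that the $n$th iteration of \cg{} on $(\tmat, \tweight)$ is indeed well defined. The two ingredients for this are Corollary~\ref{cor:roots-lower-bound} (to locate the roots of $\trhopol_n$) and the observation made at the start of Section~\ref{subsec:link} that the grade of $(\tmat, \tweight)$ is at least $\grade$.

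For well-definedness, recall from Definition~\ref{def:well-defined} that the $n$th iteration of \cg{} on $(\tmat, \tweight)$ is well defined when $n$ is at most the grade of $(\tmat, \tweight)$ and $\restr{\tmat}{\krylovspace_n}{\normalscaling} \succ 0$. The first condition holds since $n \leq \grade$ and the grade of $(\tmat, \tweight)$ is at least $\grade$. For the second, Lemma~\ref{lemma:roots-and-eigenvalues} applied to $(\tmat, \tweight)$ identifies the eigenvalues of $\restr{\tmat}{\krylovspace_n}{\normalscaling}$ with the roots of $\trhopol_n$, and Corollary~\ref{cor:roots-lower-bound}, which applies precisely because $\|\coef\|_1 < 1$, guarantees that those roots are positive. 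Hence $\restr{\tmat}{\krylovspace_n}{\normalscaling} \succ 0$, so the $n$th iteration is well defined; in particular $\trhopol_n(0) \neq 0$, and the rescaled polynomial $\tqpol_n = \trhopol_n/\trhopol_n(0)$ is meaningful.

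For the identity, I would evaluate~\eqref{eq:rho-identity-2} at $x = 0$. Since $\xipol_{n-1}^j(0) = 1$ for every $j$ by the normalization in~\eqref{eq:def-xi-pols}, this gives $\trhopol_n(0) = \rhopol_n(0)\bigl(1 - \sum_{j = \sdim+1}^\mdim \coef_j\bigr) = \rhopol_n(0)(1 - s)$ with $s = \ones^\top \coef$. Here $1 - s \neq 0$ because $|s| \leq \|\coef\|_1 < 1$, and $\rhopol_n(0) \neq 0$ because the roots of $\rhopol_n$ are positive by Lemma~\ref{lemma:rho-basic-props}. Dividing~\eqref{eq:rho-identity-2} through by $\trhopol_n(0) = \rhopol_n(0)(1 - s)$ and using $\qpol_n = \rhopol_n/\rhopol_n(0)$ from~\eqref{eq:qpol-def} then yields exactly~\eqref{eq:identity-qs}.

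There is no genuine obstacle at this stage: the heavy lifting was done in Lemmas~\ref{lemma:identity-rhos} and~\ref{lemma:identity-rhos-2} (the algebraic identity linking $\trhopol_n$ and $\rhopol_n$) and in Lemma~\ref{lemma:bound-roots}/Corollary~\ref{cor:roots-lower-bound} (the root localization). The only points that deserve care are bookkeeping ones: verifying $n \leq \grade(\tmat, \tweight)$ so that Definition~\ref{def:well-defined} and Lemma~\ref{lemma:roots-and-eigenvalues} apply to the extended problem, and noting that the positivity of the roots of $\trhopol_n$ furnished by Corollary~\ref{cor:roots-lower-bound} is exactly what is needed both for well-definedness and for $\trhopol_n(0) \neq 0$.
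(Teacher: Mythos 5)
Your proof is correct and takes essentially the same route as the paper: use Corollary~\ref{cor:roots-lower-bound} and Lemma~\ref{lemma:roots-and-eigenvalues} to establish well-definedness, then divide identity~\eqref{eq:rho-identity-2} by $\trhopol_n(0) = \rhopol_n(0)(1-s)$. Your version is slightly more explicit on the bookkeeping (the grade comparison, and the non-vanishing of $1-s$ and $\rhopol_n(0)$), which the paper leaves implicit, but the substance is the same.
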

\begin{proof}
  Corollary~\ref{cor:roots-lower-bound} gives that the roots of $\trhopol_n$ are
  positive.
  Lemma~\ref{lemma:roots-and-eigenvalues} then implies that the matrix
  $\restr{\tmat}{\tilde \krylovspace_n}{\abitbig}$ is positive definite, where
  $\tilde \krylovspace_n$ is the $n$th Krylov subspace associated to $(\tmat,
  \tweight)$.
  Thus, the $n$th iteration of \cg{} on $(\tmat, \tweight)$ is well defined.
  The definition of $\tqpol_n$ and the identity~\eqref{eq:rho-identity-2} provide
  \begin{align*}
    \tqpol_n = \frac{\trhopol_n}{\trhopol_n(0)} = \frac{\rhopol_n - \rhopol_n(0) \sum_{j = \sdim + 1}^\mdim \coef_j \xipol_{n - 1}^j}{\rhopol_n(0) - \rhopol_n(0) \sum_{j = \sdim + 1}^\mdim \coef_j} = \frac{\qpol_n - \sum_{j = \sdim + 1}^\mdim \coef_j \xipol_{n - 1}^j}{1 - \sum_{j = \sdim + 1}^\mdim \coef_j},
  \end{align*}
  which is the intended equality.
\end{proof}

\paragraph{From the polynomials to the iterates.}
Theorem~\ref{th:identity-qs}
explicitly links the polynomials $\qpol_n$ and $\tqpol_n$,
associated to the problems $(\mat, \weight)$ and $(\tmat, \tweight)$
respectively.
These polynomials determine the iterates of \cg{}
(see Section~\ref{par:connections-cg}).
Indeed, let $\ppol_{n - 1}$ and $\tilde \ppol_{n - 1}$ be the polynomials that
satisfy $\qpol_n(x) = 1 - x \ppol_{n - 1}(x)$ and
$\tqpol_n(x) = 1 - x \tilde \ppol_{n - 1}(x)$ respectively.
Then $\cgvec_n = \ppol_{n - 1}(\mat) \weight$ and $\tilde \cgvec_n = \tilde
\ppol_{n - 1}(\tmat) \tweight$.
We exploit this to relate the iterates on the two problems.
First, we need two helper lemmas to control
$\qpol_n - \xipol_{n - 1}^j$.

\begin{lemma}\label{lemma:decreasing-ppols}
  Suppose that $\lambda_\sdim > 0$ and $\lambda_\sdim > \lambda_{\sdim + 1}$.
  Let $n \in \{1, \dots, \grade\}$ and $j \in \{\sdim + 1, \dots, \mdim\}$.
  Define the polynomials $\pol_0, \ldots, \pol_{n - 1}$ as
  \begin{align*}
    \pol_k(x) = \frac{\qpol_n(x) - \xipol_k^j(x)}{x}.
  \end{align*}
  Then, $\|\pol_{n - 1}\| \leq \cdots \leq \|\pol_1\| \leq \|\pol_0\|
  = \|\ppol_{n - 1}\| = \|\cgvec_n\|$.
\end{lemma}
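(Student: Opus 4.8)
The plan is to track how the polynomial $\xipol_k^j$ (and hence $\pol_k$) evolves with $k$, and to show that each increment of $k$ corresponds to subtracting an orthogonal correction that can only decrease the relevant semi-norm. First I would observe that $\xipol_k^j$ is the rescaled version of the special polynomial $\prexipol_k^j$ from Section~\ref{subsec:prexipols}, and that Lemma~\ref{lemma:prexi-short-recurrence} gives the short recurrence relating $\prexipol_k^j$ to $\prexipol_{k-1}^j$ via a multiple of $\rhopol_k$. Rescaling that recurrence to the normalization $\xipol_k^j(0)=1$ (using $\lambda_\sdim > 0$, $\lambda_\sdim > \lambda_{\sdim+1}$, which guarantee the roots of $\rhopol_k$ and $\prexipol_k^j$ are positive, so the evaluations at $0$ are nonzero and well-signed) should yield an identity of the form $\xipol_k^j = \xipol_{k-1}^j + c_k \qpol_k$ for a suitable scalar $c_k$, where $\qpol_k = \rhopol_k/\rhopol_k(0)$. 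Note this forces $\pol_{k-1} - \pol_k = (\xipol_k^j - \xipol_{k-1}^j)/x = c_k (\qpol_k(x)-1)/x \cdot(-1)$... more precisely $\pol_{k} - \pol_{k-1} = (\xipol_{k-1}^j - \xipol_k^j)/x = -c_k(\qpol_k - 1)/x = c_k \ppol_{k-1}^{\mathrm{CG}}$-type polynomial, a polynomial of degree $k-1$.

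The key step is then an orthogonality/Pythagoras argument. I would show that $\pol_{k-1} - \pol_k$ is proportional to the degree-$(k-1)$ polynomial $(1-\qpol_k(x))/x$, which by the standard CG theory (the material around~\eqref{eq:cg-pol-min} and Lemma~\ref{lemma:rho-opt-cond}) lies in a space on which $\pol_k$ is ``orthogonal'' in the right inner product. Concretely, $\pol_{k-1}$ and $\pol_k$ differ by an element of $\pols{k-1}$, while $\pol_k = (\qpol_n - \xipol_k^j)/x$; the claim $\|\pol_k\| \le \|\pol_{k-1}\|$ should follow from writing $\pol_{k-1} = \pol_k + r_k$ and checking $\inner{\pol_k}{r_k} \ge 0$, or better, that $\pol_k$ is itself the minimum-norm element of an affine space that contains $\pol_{k-1}$. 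The cleanest route is probably to characterize $\pol_k$ as the solution of a least-squares problem: among all $q \in \pols{n-1}$ with $x q(x) \equiv \qpol_n(x) \pmod{\text{something involving } \lambda_j}$, minimize $\|q\|$; then increasing $k$ enlarges the feasible/search structure appropriately. Finally, the chain terminates at $\pol_0 = (\qpol_n - \xipol_0^j)/x$; since $\xipol_0^j$ is the constant polynomial $1$ (the degree-$0$ minimizer with value $1$ at $\lambda_j$ is the constant $1$, whose value at $0$ is also $1$), we get $\pol_0 = (\qpol_n(x)-1)/x = -\ppol_{n-1}(x)$, so $\|\pol_0\| = \|\ppol_{n-1}\|$, and then $\|\ppol_{n-1}(\mat)\weight\| = \|\ppol_{n-1}\|$ since $\|\ppol_{n-1}\| = \|\ppol_{n-1}(\mat)\weight\| = \|\cgvec_n\|$ by the norm identity in Section~\ref{par:connections-cg} (the semi-norm on polynomials induced by~\eqref{eq:semi-inner-product} equals the Euclidean norm of the corresponding vector $p(\mat)\weight$).

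The main obstacle I anticipate is pinning down the exact recurrence $\xipol_k^j = \xipol_{k-1}^j + c_k \qpol_k$ with the correct sign of $c_k$, and then identifying the precise inner-product space in which $\pol_k \perp (\pol_{k-1}-\pol_k)$ so that Pythagoras applies cleanly — the normalization by $x$ in the definition of $\pol_k$ means the natural inner product is the weighted one with weights $\weight_i^2/\lambda_i$ (as in Lemma~\ref{lemma:cg-pol-min}), not the plain one, and one has to be careful that all $\lambda_i > 0$ for $i \le \sdim$ (which is assumed) so that this weighted inner product is genuinely positive definite on the relevant subspace. Once the right inner product is fixed, monotonicity should drop out because each step removes one more orthogonal component, exactly as in the standard proof that CG residual norms (or rather iterate-defining polynomials) behave monotonically; I would present it as a one-line Pythagorean inequality per step and then chain.
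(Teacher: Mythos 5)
Your high-level plan (rescale the short recurrence of Lemma~\ref{lemma:prexi-short-recurrence}, deduce a relation between consecutive $\pol_k$, close by a Pythagoras-type inequality, then identify $\pol_0$ with $-\ppol_{n-1}$) does match the paper's strategy, and your treatment of the boundary terms $\pol_0 = -\ppol_{n-1}$, $\|\ppol_{n-1}\| = \|\cgvec_n\|$ is correct. But there are two genuine gaps.

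First, the recurrence you posit, $\xipol_k^j = \xipol_{k-1}^j + c_k \qpol_k$, cannot be right: evaluating both sides at $0$ (all three polynomials satisfy $p(0)=1$) forces $c_k = 0$. The correct rescaling of~\eqref{eq:prexi-short-recurrence} (as the paper derives) is an \emph{affine} combination
$\xipol_{k}^j = (1+\alpha)\,\xipol_{k+1}^j - \alpha\,\qpol_{k+1}$ with $\alpha \geq 0$, which gives $\pol_k - \pol_{k+1} = \alpha\,\qol_{k+1}$ where $\qol_{k+1}(x) = \bigl(\qpol_{k+1}(x) - \xipol_{k+1}^j(x)\bigr)/x$, \emph{not} a multiple of $(1-\qpol_{k+1}(x))/x$ as you assert. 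So the increment is driven by $\qpol_{k+1} - \xipol_{k+1}^j$, not by $\qpol_{k+1} - 1$.

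Second, and more substantially, the inequality $\inner{\pol_{k+1}}{\qol_{k+1}} \geq 0$ is where the real work lies, and you only assert it (``should follow from ... checking $\inner{\pol_k}{r_k} \geq 0$''). This is not an orthogonality: the increment $\qol_{k+1}$ is not orthogonal to $\pol_{k+1}$ in any convenient inner product, and your alternative ``min-norm characterization'' of $\pol_k$ is not supported by the definitions (the scaling $\prexipol_k^j \mapsto \xipol_k^j$ involves the $k$-dependent factor $\prexipol_k^j(0)$, which breaks a direct variational description). The paper instead decomposes $\inner{\pol_{k+1}}{\qol_{k+1}} = \|\qol_{k+1}\|^2 + \inner{\pol_{k+1}-\qol_{k+1}}{\qol_{k+1}}$ and controls the second term through a Taylor expansion of $\qpol_{k+1} - \xipol_{k+1}^j$ at $0$, the orthogonality of $\qpol_n$ and $\qpol_{k+1}$ to lower-degree polynomials (Lemma~\ref{lemma:rho-opt-cond}), a sign analysis of $\qpol_{k+1}'(0) - \xipol_{k+1}^j{}'(0)$ via root interlacing (Lemma~\ref{lemma:roots-interlace}), and the optimality of $\qpol_n$ from Lemma~\ref{lemma:cg-pol-min}. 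None of this is in your sketch. Note also that the norm $\|\cdot\|$ in the statement is the one with weights $\weight_i^2$ from~\eqref{eq:semi-inner-product}, not the $\weight_i^2/\lambda_i$-weighted one you suggest; the latter appears only in the single step where Lemma~\ref{lemma:cg-pol-min} is invoked.
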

\begin{proof}
  We fix $j$ and omit for all $k$ the superscript of $\xipol_k^j$ for conciseness.
  We let $k \in \{0, \dots, n - 2\}$ and prove that $\|\pol_{k + 1}\| \leq
  \|\pol_k\|$.
  We can rewrite identity~\eqref{eq:prexi-short-recurrence} as
  \begin{align*}
    \frac{\xipol_{k + 1}(\lambda_j)}{\|\xipol_{k + 1}\|^2} \xipol_{k + 1} = \frac{\xipol_k(\lambda_j)}{\|\xipol_k\|^2} \xipol_k + \frac{\qpol_{k + 1}(\lambda_j)}{\|\qpol_{k + 1}\|^2} \qpol_{k + 1}
  \end{align*}
  using the fact that $\xipol_k(\lambda_j) = 1/\prexipol_k^j(0)$ and $\xipol_{k
    + 1}(\lambda_j) = 1/\prexipol_{k + 1}^j(0)$.
  The roots of $\xipol_k$, $\xipol_{k + 1}$ and $\qpol_{k + 1}$ are all in
  $\interval{\lambda_\sdim}{\lambda_1}$ (Lemmas~\ref{lemma:rho-basic-props}
  and~\ref{lemma:roots-interlace}) and $\xipol_k(0) = \xipol_{k + 1}(0) =
  \qpol_{k + 1}(0) = 1$.
  We deduce that all the coefficients in the identity above are (strictly)
  positive.
  Divide by $\xipol_k(\lambda_j)/\|\xipol_k\|^2$ and leverage again $\xipol_k(0)
  = \xipol_{k + 1}(0) = \qpol_{k + 1}(0) = 1$ to obtain that
  \begin{align}\label{eq:recurrence-alpha}
    \xipol_k = (1 + \alpha) \xipol_{k + 1} - \alpha \qpol_{k + 1}
  \end{align}
  for some number $\alpha \geq 0$.
  It follows that $\pol_k = \pol_{k + 1} + \alpha \qol_{k + 1}$, where we define
  the polynomial
  \begin{align*}
    \qol_{k + 1}(x) = \frac{\qpol_{k + 1}(x) - \xipol_{k + 1}(x)}{x}.
  \end{align*}
  In order to prove that $\|\pol_{k + 1}\|^2 \leq \|\pol_k\|^2$, it is sufficient
  to show that the inner product $\inner{\pol_{k + 1}}{\qol_{k + 1}}$ is
  non-negative.
  We can rewrite
  \begin{align*}
    \inner{\pol_{k + 1}}{\qol_{k + 1}} = \|\qol_{k + 1}\|^2 + \inner{\pol_{k + 1} - \qol_{k + 1}}{\qol_{k + 1}}.
  \end{align*}
  The first term is positive; let us prove the second term is non-negative.
  A Taylor expansion yields
  \begin{align*}
    x \qol_{k + 1}(x) = \qpol_{k + 1}(x) - \xipol_{k + 1}(x) = \big(\qpol_{k + 1}'(0) - \xipol_{k + 1}'(0)\big) x + x^2 h(x)
  \end{align*}
  for all $x$, where $h$ is some polynomial of degree at most $k - 1$.
  It follows that
  \begin{align*}
    \inner{\pol_{k + 1} - \qol_{k + 1}}{\qol_{k + 1}} &= \sum_{i = 1}^\sdim \big(\qpol_n(\lambda_i) - \qpol_{k + 1}(\lambda_i)\big)\bigg(\frac{\qpol_{k + 1}'(0) - \xipol_{k + 1}'(0)}{\lambda_i} + h(\lambda_i)\bigg) \weight_i^2\\
                                                      &= \big(\qpol_{k + 1}'(0) - \xipol_{k + 1}'(0)\big) \sum_{i = 1}^\sdim \frac{\qpol_n(\lambda_i) - \qpol_{k + 1}(\lambda_i)}{\lambda_i} \weight_i^2
  \end{align*}
  because $\qpol_n$ and $\qpol_{k + 1}$ are both orthogonal to $h$ for the inner
  product $\inner{\cdot}{\cdot}$ (Lemma~\ref{lemma:rho-opt-cond}).
  From~\eqref{eq:recurrence-alpha} we find that $\qpol_{k + 1}'(0) - \xipol_{k +
    1}'(0) = (1 + \alpha)^{-1}\big(\qpol_{k + 1}'(0) - \xipol_k'(0)\big)$.
  Let $\gamma_1 > \dots > \gamma_{k + 1}$ and $\mu_1 > \dots > \mu_k$ be the
  roots of $\qpol_{k + 1}$ and $\xipol_k$ respectively.
  Notice that
  \begin{align*}
    \qpol_{k + 1}'(0) = -\sum_{i = 1}^{k + 1} \frac{1}{\gamma_i} < \xipol_k'(0) = -\sum_{i = 1}^k \frac{1}{\mu_i} < 0,
  \end{align*}
  where the inequalities are because the roots of these polynomials interlace
  and are all positive (Lemmas~\ref{lemma:rho-basic-props}
  and~\ref{lemma:roots-interlace}).
  We deduce that $\qpol_{k + 1}'(0) - \xipol_{k + 1}'(0)$ is negative.
  Finally, notice that $\qpol_n$ and $\qpol_{k + 1}$ are orthogonal to the
  polynomials $x \mapsto \big(1 - \qpol_n(x)\big)/x$ and
  $x \mapsto \big(1 - \qpol_{k + 1}(x)\big)/x$ respectively (Lemma~\ref{lemma:rho-opt-cond}).
  This implies that
  \begin{align*}
    \sum_{i = 1}^\sdim \frac{\qpol_n(\lambda_i)}{\lambda_i} \weight_i^2 = \sum_{i = 1}^\sdim \frac{\qpol_n(\lambda_i)^2}{\lambda_i} \weight_i^2 && \text{and} && \sum_{i = 1}^\sdim \frac{\qpol_{k + 1}(\lambda_i)}{\lambda_i} \weight_i^2 = \sum_{i = 1}^\sdim \frac{\qpol_{k + 1}(\lambda_i)^2}{\lambda_i} \weight_i^2.
  \end{align*}
  Combine this with the minimization property of $\qpol_n$ given in
  Lemma~\ref{lemma:cg-pol-min} to conclude that the inner
  product $\inner{\pol_{k + 1} - \qol_{k + 1}}{\qol_{k + 1}}$ is non-negative.
\end{proof}

\begin{lemma}\label{lemma:bound-diff-q-xipol}
  Suppose that $\lambda_\sdim > 0$ and $\lambda_\sdim > \lambda_{\sdim + 1}$.
  Let $n \in \{1, \dots, \grade\}$ and $j \in \{\sdim + 1, \dots, \mdim\}$.
  Then $\big|\qpol_n(x) - \xipol_{n - 1}^j(x)\big| \leq |\qpol_n(x) -
  1|$ for all $x \leq \lambda_\sdim$.
\end{lemma}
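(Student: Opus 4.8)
The plan is to reduce the statement to a comparison of the two polynomials written in factored form, and then to argue by cases on the sign of $x$. First I would record the factorizations. By Lemma~\ref{lemma:rho-basic-props}, $\rhopol_n$ is monic of degree $n$ with roots $\gamma_1 > \cdots > \gamma_n$ in $\interval{\lambda_\sdim}{\lambda_1}$, so $\qpol_n(x) = \rhopol_n(x)/\rhopol_n(0) = \prod_{i=1}^n (1 - x/\gamma_i)$. Since $j > \sdim$ and the eigenvalues are ordered, $\lambda_j \le \lambda_{\sdim+1} < \lambda_\sdim$, so Lemma~\ref{lemma:roots-interlace}, applied with $\pxplambda = \lambda_j$ at order $n-1 \le \grade - 1$, shows that $\prexipol_{n-1}^j$ has degree $n-1$ with roots $\mu_1 > \cdots > \mu_{n-1}$ in $\interval{\lambda_\sdim}{\lambda_1}$ which strictly interlace those of $\rhopol_n$, namely $\gamma_{i+1} < \mu_i < \gamma_i$ for $i = 1, \dots, n-1$; hence $\xipol_{n-1}^j(x) = \prexipol_{n-1}^j(x)/\prexipol_{n-1}^j(0) = \prod_{i=1}^{n-1}(1 - x/\mu_i)$ (an empty product, namely the constant $1$, when $n=1$).

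Next I would exploit that, for $x \le \lambda_\sdim$, every $\gamma_i$ satisfies $\gamma_i \ge \lambda_\sdim \ge x$ and every $\mu_i$ satisfies $\mu_i > \gamma_n \ge \lambda_\sdim \ge x$ \emph{strictly}; so each factor $1 - x/\gamma_i$ is $\ge 0$ and each factor $1 - x/\mu_i$ is $> 0$. The heart of the argument is a pairing: for $i = 1, \dots, n-1$ compare $1 - x/\mu_i$ with $1 - x/\gamma_{i+1}$, using $0 < \gamma_{i+1} < \mu_i$, and take products to compare $\xipol_{n-1}^j(x) = \prod_{i=1}^{n-1}(1-x/\mu_i)$ with $\prod_{i=2}^{n}(1-x/\gamma_i)$. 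Then relate the latter to $\qpol_n(x) = (1-x/\gamma_1)\prod_{i=2}^{n}(1-x/\gamma_i)$ through the single extra factor $1 - x/\gamma_1$.

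I would then close by cases. If $0 \le x \le \lambda_\sdim$, every factor lies in $\interval{0}{1}$, so $\qpol_n(x), \xipol_{n-1}^j(x) \in \interval{0}{1}$; the pairing gives $\prod_{i=1}^{n-1}(1-x/\gamma_{i+1}) \le \prod_{i=1}^{n-1}(1-x/\mu_i)$, i.e.\ $\prod_{i=2}^n (1-x/\gamma_i) \le \xipol_{n-1}^j(x)$, and since $0 \le 1 - x/\gamma_1 \le 1$ this yields $\qpol_n(x) \le \xipol_{n-1}^j(x)$; therefore $|\qpol_n(x) - \xipol_{n-1}^j(x)| = \xipol_{n-1}^j(x) - \qpol_n(x) \le 1 - \qpol_n(x) = |\qpol_n(x) - 1|$. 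If $x < 0$, every factor exceeds $1$, so $\qpol_n(x), \xipol_{n-1}^j(x) > 1$; now the pairing reverses, $\prod_{i=2}^n(1-x/\gamma_i) > \xipol_{n-1}^j(x)$, and since $1 - x/\gamma_1 > 1$ this gives $\qpol_n(x) > \xipol_{n-1}^j(x)$; therefore $|\qpol_n(x) - \xipol_{n-1}^j(x)| = \qpol_n(x) - \xipol_{n-1}^j(x) < \qpol_n(x) - 1 = |\qpol_n(x) - 1|$. Together these cover all $x \le \lambda_\sdim$, with the $n=1$ case ($\xipol_0^j \equiv 1$) falling out of the empty-product convention.

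The only point that demands care is getting the orientation of the interlacement from Lemma~\ref{lemma:roots-interlace} correct and noting that every root of $\rhopol_n$ and of $\prexipol_{n-1}^j$ sits at or above $\lambda_\sdim$; that is precisely what makes the sign of each factor $1 - x/\gamma_i$ and $1 - x/\mu_i$ unambiguous on $\{x \le \lambda_\sdim\}$, after which the estimate is an elementary manipulation of products with no analytic subtlety.
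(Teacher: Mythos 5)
Your proposal is correct and follows essentially the same route as the paper: both use the factored forms $\qpol_n(x) = \prod_i (1-x/\gamma_i)$ and $\xipol_{n-1}^j(x) = \prod_i (1-x/\mu_i)$, the interlacing and lower bound $\lambda_\sdim$ on all roots from Lemmas~\ref{lemma:rho-basic-props} and~\ref{lemma:roots-interlace}, and a case split on the sign of $x$ to show $\qpol_n(x) \ge \xipol_{n-1}^j(x) \ge 1$ for $x \le 0$ and $1 \ge \xipol_{n-1}^j(x) \ge \qpol_n(x) \ge 0$ for $x \in [0, \lambda_\sdim]$. The paper leaves the factor-by-factor pairing implicit where you spell it out, but the content is the same.
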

\begin{proof}
  Let $\gamma_1 > \dots > \gamma_n$ and $\mu_1 > \dots > \mu_{n - 1}$ be the
  roots of $\qpol_n$ and $\xipol_{n - 1}^j$ respectively.
  Then we can write
  \begin{align*}
    \qpol_n(x) = \prod_{i = 1}^n \bigg(1 - \frac{x}{\gamma_i}\bigg) && \text{and} && \xipol_{n - 1}^j(x) = \prod_{i = 1}^{n - 1} \bigg(1 - \frac{x}{\mu_i}\bigg)
  \end{align*}
  for all $x \in \reals$.
  The roots of $\qpol_n$ and $\xipol_{n - 1}^j$ interlace, and are all at least
  $\lambda_d$ (Lemmas~\ref{lemma:rho-basic-props}
  and~\ref{lemma:roots-interlace}).
  We deduce from these properties and from the expressions above that
  $\qpol_n(x) \geq \xipol_{n - 1}^j(x) \geq 1$ for all $x \leq
  0$ and $1 \geq \xipol_{n - 1}^j(x) \geq \qpol_n(x) \geq 0$ for all
  $x \in \interval{0}{\lambda_\sdim}$.
  The claim holds in both cases.
\end{proof}

We now compare the iterates $\cgvec$ and $\tilde \cgvec$ of \cg{} on $(\mat,
\weight)$ and $(\tmat, \tweight)$ at a given iteration.
The following bounds show that, in a certain regime, the first $\sdim$
components of $\tilde \cgvec$ are close to $\cgvec$, and the last $\mdim -
\sdim$ components are close to zero.
This means that (in this regime) \cg{} on $(\tmat, \tweight)$ essentially ignores the eigenvalues $\lambda_{\sdim + 1\vphantom{\mdim}}, \dots, \lambda_{\mdim}$ and the weights $\weight_{\sdim + 1\vphantom{\mdim}}, \dots, \weight_\mdim$.

\begin{theorem}\label{th:bounds-iterates}
  Suppose that $\lambda_\sdim > 0$ and $\lambda_\sdim > \lambda_{\sdim + 1}$.
  Let $n \in \{1, \dots, \grade\}$ and $\coef$ as in
  Lemma~\ref{lemma:identity-rhos-2}.
  Suppose that $\|\coef\|_1 < 1$ so that the $n$th iteration of \cg{} on
  $(\tmat, \tweight)$ is well defined (Theorem~\ref{th:identity-qs}).
  Let $\cgvec$ and $\tilde \cgvec$ denote the $n$th iterates of \cg{} on $(\mat,
  \weight)$ and $(\tmat, \tweight)$ respectively.
  Then
  \begin{align*}
    \|\tilde \cgvec_{1:\sdim} - \cgvec\| \leq \frac{\|\coef\|_1}{1 - \|\coef\|_1}\|\cgvec\| && \text{and} && |\tilde \cgvec_i| \leq \frac{|\weight_i|}{1 - \|\coef\|_1} \bigg| \frac{\big(1 - \lambda_i/\lambda_\sdim\big)^n - 1}{\lambda_i} \bigg|
  \end{align*}
  for all $i \in \{\sdim + 1, \dots, \mdim\}$, where $\tilde \cgvec_{1:\sdim}$
  extracts the first $\sdim$ entries of $\tilde \cgvec$.
  (The absolute value in the expression on the right evaluates to
  $n/\lambda_\sdim$ in the limit where $\lambda_i = 0$.)
\end{theorem}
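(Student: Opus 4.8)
The plan is to push the polynomial identity of Theorem~\ref{th:identity-qs} down to the level of the iterates. Recall (Section~\ref{par:connections-cg}) that $\cgvec = \ppol_{n-1}(\mat)\weight$ and $\tilde\cgvec = \tilde\ppol_{n-1}(\tmat)\tweight$, where $\qpol_n(x) = 1 - x\ppol_{n-1}(x)$ and $\tqpol_n(x) = 1 - x\tilde\ppol_{n-1}(x)$. For the fixed $n$ and each $j \in \{\sdim+1,\dots,\mdim\}$, introduce the polynomials $\pol_0^{(j)},\dots,\pol_{n-1}^{(j)}$ of Lemma~\ref{lemma:decreasing-ppols}, namely $\pol_k^{(j)}(x) = (\qpol_n(x) - \xipol_k^j(x))/x$ (the superscript $j$ is suppressed in that lemma). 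Starting from $\tqpol_n(x) = 1 - x\tilde\ppol_{n-1}(x)$, substituting~\eqref{eq:identity-qs}, using $s = \ones^\top\coef = \sum_j \coef_j$ to absorb the constant terms, and then using the factorizations $1 - \qpol_n = x\ppol_{n-1}$ and $1 - \xipol_{n-1}^j = x(\ppol_{n-1} + \pol_{n-1}^{(j)})$, I obtain the identity
\[
  \tilde\ppol_{n-1} = \ppol_{n-1} - \frac{1}{1-s}\sum_{j = \sdim + 1}^{\mdim} \coef_j\, \pol_{n-1}^{(j)},
\]
which holds as polynomials; note $1 - s \geq 1 - \|\coef\|_1 > 0$ by hypothesis. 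Both claimed bounds are then just evaluations of this identity, at the diagonal matrix $\mat$ and at individual eigenvalues $\lambda_i$.

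For the first bound, restrict to the first $\sdim$ coordinates, where $(\tmat,\tweight)$ coincides with $(\mat,\weight)$, so $\tilde\cgvec_{1:\sdim} = \tilde\ppol_{n-1}(\mat)\weight$ and $\tilde\cgvec_{1:\sdim} - \cgvec = -\frac{1}{1-s}\sum_j \coef_j\, \pol_{n-1}^{(j)}(\mat)\weight$. Because $\mat = \diag(\lambda_1,\dots,\lambda_\sdim)$, the map $p \mapsto p(\mat)\weight$ is an isometry from $(\pols{\grade-1},\|\cdot\|)$, with $\|\cdot\|$ from~\eqref{eq:both-semi-inner-product}, to $\reals^\sdim$, exactly as in the relation $\|\cgres_n\| = \|\qpol_n\|$ of Section~\ref{par:connections-cg}. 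Hence $\|\pol_{n-1}^{(j)}(\mat)\weight\| = \|\pol_{n-1}^{(j)}\| \leq \|\pol_0^{(j)}\| = \|\ppol_{n-1}(\mat)\weight\| = \|\cgvec\|$ by the monotonicity in Lemma~\ref{lemma:decreasing-ppols}. The triangle inequality and $1 - s \geq 1 - \|\coef\|_1$ then give $\|\tilde\cgvec_{1:\sdim} - \cgvec\| \leq \frac{\|\coef\|_1}{1 - \|\coef\|_1}\|\cgvec\|$.

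For the second bound, fix $i \in \{\sdim+1,\dots,\mdim\}$, so that $\lambda_i \leq \lambda_{\sdim+1} < \lambda_\sdim$. Evaluating the identity at $\lambda_i$ gives $\tilde\cgvec_i = \weight_i\big(\ppol_{n-1}(\lambda_i) - \frac{1}{1-s}\sum_j \coef_j\, \pol_{n-1}^{(j)}(\lambda_i)\big)$. Dividing the bound of Lemma~\ref{lemma:bound-diff-q-xipol}, $|\qpol_n(\lambda_i) - \xipol_{n-1}^j(\lambda_i)| \leq |\qpol_n(\lambda_i) - 1|$, by $|\lambda_i|$ yields $|\pol_{n-1}^{(j)}(\lambda_i)| \leq |\ppol_{n-1}(\lambda_i)|$, hence
\[
  |\tilde\cgvec_i| \leq |\weight_i|\,|\ppol_{n-1}(\lambda_i)|\Big(1 + \tfrac{\|\coef\|_1}{1-\|\coef\|_1}\Big) = \frac{|\weight_i|\,|\ppol_{n-1}(\lambda_i)|}{1 - \|\coef\|_1}.
\]
It remains to bound $|\ppol_{n-1}(\lambda_i)| = |1 - \qpol_n(\lambda_i)|/|\lambda_i|$. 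Writing $\qpol_n(x) = \prod_{k=1}^n(1 - x/\gamma_k)$ with roots $\gamma_k \in [\lambda_\sdim,\lambda_1]$ (Lemma~\ref{lemma:rho-basic-props}), in particular $\gamma_k \geq \lambda_\sdim > 0$, a factor-by-factor comparison of $1 - \lambda_i/\gamma_k$ with $1 - \lambda_i/\lambda_\sdim$ — treating $\lambda_i < 0$ and $0 \leq \lambda_i < \lambda_\sdim$ separately — gives $|1 - \qpol_n(\lambda_i)| \leq |(1 - \lambda_i/\lambda_\sdim)^n - 1|$. This is the claim; the degenerate case $\lambda_i = 0$ follows by continuity of the polynomials involved, with $|\ppol_{n-1}(0)| = \sum_k 1/\gamma_k \leq n/\lambda_\sdim$ matching the stated limit.

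The main obstacle I anticipate is getting the constant to be exactly $1/(1-\|\coef\|_1)$ in the second bound. A naive triangle inequality applied to~\eqref{eq:identity-qs} — or to the form $\tilde\ppol_{n-1} = \frac{1}{1-s}\big(\ppol_{n-1} - \sum_j \coef_j (1 - \xipol_{n-1}^j)/x\big)$ — only delivers the weaker $(1+\|\coef\|_1)/(1-\|\coef\|_1)$; the gain comes from first extracting the common $\ppol_{n-1}$ term from each summand (equivalently, rewriting everything through $\pol_{n-1}^{(j)}$) so that the $(1-s)^{-1}$ prefactor multiplies only the small quantities $\coef_j \pol_{n-1}^{(j)}$, while the ``$1$'' survives with coefficient $1$. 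A secondary nuisance is that $\lambda_i$ can be negative here (the hypotheses only assume $\lambda_\sdim > \lambda_{\sdim+1}$), so the final factor-by-factor estimate, and the sign discussion underlying Lemma~\ref{lemma:bound-diff-q-xipol}, must handle both signs of $\lambda_i$.
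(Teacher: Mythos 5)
Your proof is correct and takes essentially the same route as the paper's: both derive, from~\eqref{eq:identity-qs}, the polynomial relation $\tilde\ppol_{n-1} = \ppol_{n-1} - \frac{1}{1-s}\sum_j \coef_j \pol_{n-1}^{j}$, then bound the first $\sdim$ coordinates via Lemma~\ref{lemma:decreasing-ppols} (you via triangle inequality plus the norm identity $\|p(\mat)\weight\| = \|p\|$, the paper by expanding the squared norm and applying Cauchy--Schwarz termwise, which is equivalent) and bound the remaining coordinates via Lemma~\ref{lemma:bound-diff-q-xipol} followed by the same factor-by-factor estimate on the roots of $\qpol_n$. Your remark at the end correctly identifies why extracting $\ppol_{n-1}$ as the common term is the key to getting the factor $1/(1-\|\coef\|_1)$ rather than $(1+\|\coef\|_1)/(1-\|\coef\|_1)$.
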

\begin{proof}
  Define the polynomials
  \begin{align*}
    \ppol_{n - 1}(x) = \frac{1 - \qpol_n(x)}{x}, && \tilde \ppol_{n - 1}(x) = \frac{1 - \tilde \qpol_n(x)}{x} && \text{and} && \pol_{n - 1}^j(x) = \frac{\qpol_n(x) - \xipol_{n - 1}^j(x)}{x}.
  \end{align*}
  The iterates satisfy $\cgvec = \ppol_{n - 1}(\mat)\weight$ and $\tilde
  \cgvec = \tilde \ppol_{n - 1}(\tmat)\tweight$ as recalled in
  Section~\ref{par:connections-cg}.
  We exploit the link given in Theorem~\ref{th:identity-qs} to relate $\cgvec$
  and $\tilde \cgvec$.
  Rearranging the terms in~\eqref{eq:identity-qs} gives
  \begin{align}\label{eq:qdiff}
    \tqpol_n - \qpol_n = \frac{1}{1 - s} \sum_{j = \sdim + 1}^\mdim \coef_j\big(\qpol_n - \xipol_{n - 1}^j\big) && \text{where} && s = \ones^\top \coef.
  \end{align}
  For all $i \in \{1, \dots, \sdim\}$ we have $\tilde \cgvec_i - \cgvec_i =
  \big(\tilde \ppol_{n - 1}(\lambda_i) - \ppol_{n - 1}(\lambda_i)\big) \weight_i$.
  Combining this with~\eqref{eq:qdiff} gives
  \begin{align*}
    \|\tilde \cgvec_{1:\sdim} - \cgvec\|^2 = \frac{1}{(1 - s)^2} \sum_{i = 1}^\sdim \bigg( \sum_{j = \sdim + 1}^\mdim \coef_j \pol_{n - 1}^j(\lambda_i) \weight_i \bigg)^2 = \frac{1}{(1 - s)^2} \sum_{j = \sdim + 1}^\mdim \sum_{k = \sdim + 1}^\mdim \coef_j\coef_k \inner{\pol_{n - 1}^j}{\pol_{n - 1}^k},
  \end{align*}
  where $\pol_{n - 1}^j$ is the polynomial we defined above.
  Lemma~\ref{lemma:decreasing-ppols} and the \cauchyschwarz{} inequality imply
  that $|\inner{\pol_{n - 1}^j}{\pol_{n - 1}^k}| \leq \|\cgvec\|^2$ for all $j, k$.
  Combine this with the triangle inequality to obtain $\|\tilde \cgvec_{1:\sdim}
  - \cgvec\|^2 \leq (1 - s)^{-2} \|\coef\|_1^2 \|\cgvec\|^2$.
  Now let $i \in \{\sdim + 1, \dots, \mdim\}$.
  From~\eqref{eq:qdiff} we compute
  \begin{align*}
    \tilde \ppol_{n - 1}(\lambda_i) = \frac{1}{\lambda_i}\bigg( 1 - \qpol_n(\lambda_i) + \frac{1}{1 - s} \sum_{j = \sdim + 1}^\mdim \coef_j \big( \xipol_{n - 1}^j(\lambda_i) - \qpol_n(\lambda_i) \big)\bigg).
  \end{align*}
  Then, by the triangle inequality and Lemma~\ref{lemma:bound-diff-q-xipol},
  $|\tilde \ppol_{n - 1}(\lambda_i)| \leq |\lambda_i|^{-1} |\qpol_n(\lambda_i) -
  1| \big( 1 + (1 - s)^{-1} \|\coef\|_1 \big)$.
  The polynomial $\qpol_n$ has $n$ real roots $\gamma_1 \geq \cdots \geq
  \gamma_n$, with $\gamma_n \geq \lambda_\sdim$
  (Lemma~\ref{lemma:rho-basic-props}).
  Thus, $\qpol_n(\lambda_i) = (1-\lambda_i/\gamma_1) \cdots
  (1-\lambda_i/\gamma_n) \geq 0$ (each factor is positive since $\lambda_i <
  \lambda_\sdim$ and $\lambda_\sdim > 0$).
  If $\lambda_i \leq 0$, then $1 \leq \qpol_n(\lambda_i) \leq (1 -
  \lambda_i/\lambda_\sdim)^n$.
  If $\lambda_i \geq 0$, then $1 \geq \qpol_n(\lambda_i) \geq (1 -
  \lambda_i/\lambda_\sdim)^n$.
  In all cases, $|\qpol_n(\lambda_i) - 1| \leq |(1 - \lambda_i/\lambda_\sdim)^n
  - 1|$.
  This gives the bound on $|\tilde \cgvec_i|$.
\end{proof}

Note that the bound $\|\cgvec\| \leq \lambda_\sdim^{-1} \|\weight\|$ always
holds when $\lambda_\sdim > 0$, because for $\mat \succ 0$ all iterates are smaller than the solution $\mat^{-1}\weight$ (see references in Lemma~\ref{lemma:iterates-cg-grow}).
Thus, the results above imply in particular $\|\tilde \cgvec_{1:\sdim} - \cgvec\|
\leq \lambda_\sdim^{-1}(1 - \|\coef\|_1)^{-1}\|\coef\|_1\|\weight\|$.

\subsection{Implications for our regime of interest}\label{subsec:effective-regime}

The important hypothesis in Theorems~\ref{th:identity-qs}
and~\ref{th:bounds-iterates} is that the entries of $\coef$ are small.
In this section, we show that this is indeed the case for the regime that will
matter in the local convergence analysis of trust-region methods in
Section~\ref{sec:application-rtr}.
Specifically, we assume that
\begin{align}\label{eq:effective-regime}
  |\lambda_{\sdim + 1}|, \dots, |\lambda_\mdim| \ll \lambda_\sdim
  && \text{and} &&
  \weight_{\sdim+1}^2 + \cdots + \weight_\mdim^2 \ll \weight_1^2 + \cdots + \weight_\sdim^2.
\end{align}
That is, the last $\mdim - \sdim$ eigenvalues of $\tmat$ are close to zero, and
the last $\mdim - \sdim$ components of $\tweight$ carry little weight.
Under these conditions, the matrix $C$ (defined in
Lemma~\ref{lemma:identity-rhos-2}) is close to the rank one matrix $\ones
\ones^\top$ and we can control the entries of $\coef$.

The polynomials $\xipol_{n - 1}$ and $\qpol_n$ satisfy $\xipol_{n - 1}(0) =
\qpol_n(0) = 1$ so the values $\xipol_{n - 1}(x)$ and $\qpol_n(x)$
are close to one when $x$ is close to zero.
To quantify this, we define
\begin{align}\label{eq:def-kappa-tau}
  \xidiv = \max_{i, j \in \{\sdim + 1, \dots, \mdim\}} \big|\xipol_{n - 1}^j(\lambda_{i}) - 1\big| && \text{and} && \qdiv = \max_{i \in \{\sdim + 1, \dots, \mdim\}} \big|\qpol_n(\lambda_i) - 1\big|.
\end{align}
Note that these two numbers depend on the iteration $n$ but it will always be
clear from context.
For each coefficient $\coef_j$, we define below a reference quantity $\beta_j =
b_j^2/D_{jj}$ and show $\coef_j = \beta_j + o(\beta_j)$ in the
regime~\eqref{eq:effective-regime} (the matrix $D$ comes from
Lemma~\ref{lemma:identity-rhos-2}).
We start with a helper lemma.

\begin{lemma}\label{lemma:general-psd-bound}
  Given $S, M \succeq 0$,
  the matrix $X = (I + S M)^{-1} S$ satisfies
  \begin{align*}
    \max_{ij} |X_{ij}| \leq \max_{ij} |S_{ij}|.
  \end{align*}
\end{lemma}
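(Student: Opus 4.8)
The plan is to reduce the entrywise bound to an operator-norm statement about a power series in $SM$, exploiting that $S$ and $M$ are positive semidefinite and symmetric. First I would write $X = (I + SM)^{-1}S$ and observe that the Neumann series $\sum_{k \geq 0} (-SM)^k S$ converges provided the spectral radius of $SM$ is below $1$; I should first handle that convergence issue, since nothing in the hypothesis bounds $\|SM\|$. The clean way around this is to note that $SM$ is similar to the symmetric matrix $S^{1/2} M S^{1/2} \succeq 0$, so its eigenvalues are nonnegative reals, but they need not be small. So a direct Neumann-series argument does not work unconditionally, and I expect this to be the main obstacle: one needs an argument that does not assume $\|SM\| < 1$.

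The fix I would pursue is to write $X = (I + SM)^{-1}S = S^{1/2}\bigl(I + S^{1/2} M S^{1/2}\bigr)^{-1} S^{1/2}$, using that $(I + SM)^{-1}S = S^{1/2}(I + S^{1/2}MS^{1/2})^{-1}S^{1/2}$ (verify by multiplying both sides by $I + SM$ on the left, or by a polynomial-functional-calculus argument on $S^{1/2}MS^{1/2}$). Set $N = S^{1/2} M S^{1/2} \succeq 0$ and $Y = (I+N)^{-1}$. Then $X = S^{1/2} Y S^{1/2}$, and since $N \succeq 0$ we have $0 \preceq Y \preceq I$. Now for any vector $e_i$ (standard basis), $X_{ii} = e_i^\top S^{1/2} Y S^{1/2} e_i \leq e_i^\top S^{1/2} S^{1/2} e_i = S_{ii}$ because $Y \preceq I$. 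Thus the diagonal entries of $X$ are dominated by those of $S$, and both $X$ and $S$ are positive semidefinite (as $Y\succeq 0$ forces $X \succeq 0$). For a PSD matrix, every entry is bounded in absolute value by the geometric mean of the two corresponding diagonal entries: $|X_{ij}| \leq \sqrt{X_{ii} X_{jj}} \leq \sqrt{S_{ii} S_{jj}} \leq \max_{kl} |S_{kl}|$, where the last step uses that a PSD matrix attains its maximal-magnitude entry on the diagonal (again from the $2\times 2$ principal minor being PSD).

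So the key steps, in order, are: (1) rewrite $(I+SM)^{-1}S$ as $S^{1/2}(I + S^{1/2}MS^{1/2})^{-1}S^{1/2}$ and justify this identity; (2) conclude $X \succeq 0$ and $X_{ii} \leq S_{ii}$ for all $i$ from $0 \preceq (I+N)^{-1} \preceq I$; (3) invoke the standard fact that for PSD matrices $|X_{ij}| \leq \sqrt{X_{ii}X_{jj}}$ and that $\max_{ij}|S_{ij}| = \max_i S_{ii}$; (4) chain these to get $\max_{ij}|X_{ij}| \leq \max_i X_{ii} \leq \max_i S_{ii} = \max_{ij}|S_{ij}|$. The only genuinely delicate point is step (1) when $S$ is singular: there $S^{1/2}$ is still well-defined as the PSD square root, and the identity $(I+SM)^{-1}S = S^{1/2}(I+S^{1/2}MS^{1/2})^{-1}S^{1/2}$ can be checked by left-multiplying by $(I+SM)$ and using $SM S^{1/2} = S^{1/2}(S^{1/2}MS^{1/2})$, so no invertibility of $S$ is needed; I would spell this out carefully since it is the one place the argument could be accused of hand-waving.

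\begin{proof}
  Write $N = S^{1/2} M S^{1/2}$, where $S^{1/2} \succeq 0$ is the symmetric PSD
  square root of $S$. Then $N \succeq 0$, so $I + N$ is invertible and
  $0 \preceq (I + N)^{-1} \preceq I$. We first claim that
  \begin{align}\label{eq:woodbury-like-identity}
    X = (I + S M)^{-1} S = S^{1/2} (I + N)^{-1} S^{1/2}.
  \end{align}
  To see this, it suffices to check that $(I + S M) S^{1/2} (I + N)^{-1} S^{1/2}
  = S$. Using $S^{1/2} S^{1/2} = S$ and $S M S^{1/2} = S^{1/2} (S^{1/2} M
  S^{1/2}) = S^{1/2} N$, we get
  \begin{align*}
    (I + S M) S^{1/2} (I + N)^{-1} S^{1/2} = \big(S^{1/2} + S^{1/2} N\big) (I + N)^{-1} S^{1/2} = S^{1/2} (I + N) (I + N)^{-1} S^{1/2} = S,
  \end{align*}
  which proves~\eqref{eq:woodbury-like-identity}.

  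From~\eqref{eq:woodbury-like-identity} and $(I + N)^{-1} \succeq 0$ we deduce
  $X \succeq 0$. Moreover, since $(I + N)^{-1} \preceq I$, for every standard
  basis vector $e_i$ we have
  \begin{align*}
    X_{ii} = e_i^\top S^{1/2} (I + N)^{-1} S^{1/2} e_i \leq e_i^\top S^{1/2} S^{1/2} e_i = S_{ii}.
  \end{align*}
  Now we use the standard fact that for any PSD matrix $Z$, every $2 \times 2$
  principal submatrix is PSD, hence $Z_{ij}^2 \leq Z_{ii} Z_{jj}$; applied to
  $Z = X$ (PSD) this gives $|X_{ij}| \leq \sqrt{X_{ii} X_{jj}}$, and applied to
  $Z = S$ it gives $|S_{ij}| \leq \sqrt{S_{ii} S_{jj}} \leq \max_k S_{kk}$, so
  $\max_{ij} |S_{ij}| = \max_k S_{kk}$. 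Combining these,
  \begin{align*}
    \max_{ij} |X_{ij}| \leq \max_{ij} \sqrt{X_{ii} X_{jj}} = \max_i X_{ii} \leq \max_i S_{ii} = \max_{ij} |S_{ij}|,
  \end{align*}
  which is the claimed bound.
\end{proof}
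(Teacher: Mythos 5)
Your proof is correct, and it reaches the same structural decomposition as the paper, namely $X = S^{1/2} Y S^{1/2}$ with $0 \preceq Y \preceq I$ and $\max_{ij}|S_{ij}| = \max_i S_{ii}$, but arrives there by a cleaner path. The paper first factors $M = KK^\top$, applies the Woodbury identity to $(I + SM)^{-1}S$, and then takes an SVD of $S^{1/2}K$ to exhibit $Y = U(I+\Sigma^2)^{-1}U^\top$; you instead verify the identity $X = S^{1/2}(I + S^{1/2}MS^{1/2})^{-1}S^{1/2}$ directly by left-multiplying by $I + SM$, which avoids both the factorization and the SVD and is careful about singular $S$. The final bounding step also differs mildly: the paper uses Cauchy--Schwarz with $\|Y\| \leq 1$ on columns of $S^{1/2}$, while you use the PSD off-diagonal bound $|X_{ij}| \leq \sqrt{X_{ii}X_{jj}}$ together with $X_{ii} \leq S_{ii}$. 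Both routes give the same inequality; yours is shorter and more self-contained, the paper's is perhaps more mechanical. One small note: your argument implicitly relies on $I + SM$ being invertible; this follows from your own observation that $SM$ is similar to $S^{1/2}MS^{1/2} \succeq 0$, so it would be worth stating that explicitly before writing $(I+SM)^{-1}$.
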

\begin{proof}
  We can factorize $M = K K^\top$ for some matrix $K$.
  The Woodbury formula provides
  \begin{align*}
    X = \pig( I - S K (I + K^\top S K)^{-1} K^\top\pig) S = S^{\sfrac{1}{2}} \pig( I - S^{\sfrac{1}{2}} K (I + K^\top S K)^{-1} K^\top S^{\sfrac{1}{2}} \pig) S^{\sfrac{1}{2}}.
  \end{align*}
  Let $S^{\sfrac{1}{2}} K = U \Sigma V^\top$ be a singular value decomposition.
  We can rewrite the previous identity as
  \begin{align*}
    X = S^{\sfrac{1}{2}} Y S^{\sfrac{1}{2}} && \text{where} && Y = U (I + \Sigma^2)^{-1} U^\top.
  \end{align*}
  Let $w$ be the column of $S^{\sfrac{1}{2}}$ of maximal $2$-norm.
  Notice that $\max_{ij} |S_{ij}| = \|w\|^2$.
  Now let $u$ and $v$ be the $i$th and $j$th columns of $S^{\sfrac{1}{2}}$
  respectively.
  Then
  \begin{align*}
    |X_{ij}| = |u^\top Y v| \leq \|u\|\|v\| \leq \|w\|^2 = \max_{ij} |S_{ij}|,
  \end{align*}
  where the first inequality is because $\|Y\| \leq 1$ and the second inequality
  is the maximality of $w$.
\end{proof}

\begin{lemma}\label{lemma:equiv-sigma}
  Under the assumptions (and with the notation) of
  Lemma~\ref{lemma:identity-rhos-2}, and with $\xidiv, \qdiv$ as
  in~\eqref{eq:def-kappa-tau}, the entries of $\coef$ satisfy
  \begin{align*}
    \big|\coef_j - \beta_j\big| \leq \Big(\qdiv + (1 + \qdiv)(1 + \xidiv) \|D^{-1}\| \|B\|_\frob^2 \Big) \beta_j && \text{where} && \beta = D^{-1}B^2.
  \end{align*}
\end{lemma}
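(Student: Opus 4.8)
The plan is to turn the defining relation $(D+B^2C)\coef = B^2w$ into a fixed‑point identity for $\coef$, reduce the claimed estimate to controlling the single scalar $(C\coef)_j$, and then bound $(C\coef)_j$ by Cauchy--Schwarz in a seminorm built from the resolvent, exploiting the positive semidefiniteness of the Gram matrix of the polynomials $\xipol_{n-1}^j$.

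First I would rearrange $(D+B^2C)\coef = B^2w$ into $\coef = D^{-1}B^2(w - C\coef)$; since $D$ is diagonal with $D_{jj}>0$ (so $\beta_j=\weight_j^2/D_{jj}\ge 0$), this reads entrywise $\coef_j = \beta_j\big(w_j - (C\coef)_j\big)$, hence $|\coef_j-\beta_j| = \beta_j\,\big|w_j - 1 - (C\coef)_j\big| \le \beta_j\big(\qdiv + |(C\coef)_j|\big)$, where I used $|w_j-1| = |\qpol_n(\lambda_j)-1|\le\qdiv$ from~\eqref{eq:def-kappa-tau}. So the whole lemma reduces to showing $|(C\coef)_j| \le (1+\qdiv)(1+\xidiv)\|D^{-1}\|\|B\|_\frob^2$.

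For that, I would recall, as in the proofs of Lemmas~\ref{lemma:identity-rhos} and~\ref{lemma:identity-rhos-2} (via the characterization~\eqref{eq:prexi-opt-cond}), that $C = D^{-1}G$ where $G$ is the symmetric Gram matrix $G_{ij}=\inner{\xipol_{n-1}^i}{\xipol_{n-1}^j}\succeq 0$; set $S = D^{-1}B^2D^{-1}\succeq 0$ (diagonal, $S_{mm}=\weight_m^2/D_{mm}^2$), so that $D+B^2C = D(I+SG)$, which is invertible by Lemma~\ref{lemma:identity-rhos}. Then $\coef = (I+SG)^{-1}D^{-1}B^2w = X(Dw)$ with $X := (I+SG)^{-1}S$, using $D^{-1}B^2w = S(Dw)$. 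The key structural fact — obtained by the very factorization used to prove Lemma~\ref{lemma:general-psd-bound} (write $G=KK^\top$, take an SVD of $S^{1/2}K$) — is that $X = S^{1/2}YS^{1/2}$ with $0\preceq Y\preceq I$, so $X$ is symmetric with $0\preceq X\preceq S$. Writing $c_j$ for the vector with entries $(c_j)_m=\xipol_{n-1}^m(\lambda_j)$, we have $(C\coef)_j = c_j^\top X(Dw)$, and Cauchy--Schwarz for the semi‑inner‑product $(a,b)\mapsto a^\top X b$, together with $X\preceq S$, gives $|(C\coef)_j| \le \sqrt{c_j^\top S c_j}\,\sqrt{(Dw)^\top S(Dw)}$. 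Here $(Dw)^\top S(Dw) = w^\top(DSD)w = w^\top B^2w = \sum_m\weight_m^2 w_m^2 \le (1+\qdiv)^2\|B\|_\frob^2$ by~\eqref{eq:def-kappa-tau}, and $c_j^\top S c_j = \sum_m (\weight_m^2/D_{mm}^2)\,\xipol_{n-1}^m(\lambda_j)^2 \le (1+\xidiv)^2\|D^{-1}\|^2\|B\|_\frob^2$, again by~\eqref{eq:def-kappa-tau}; multiplying the two square roots yields exactly $|(C\coef)_j| \le (1+\qdiv)(1+\xidiv)\|D^{-1}\|\|B\|_\frob^2$, and the first step then closes the argument.

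The main obstacle is this middle estimate: $SG$ is not symmetric, so there is no bound $\|(I+SG)^{-1}\|_{\mathrm{op}}\le 1$, and pushing the computation through the crude entrywise bound of Lemma~\ref{lemma:general-psd-bound} produces spurious factors (of the type $\sum_m|\weight_m|$ or the conditioning of $D$) that would spoil the clean constant. The resolution is precisely to observe that $X=(I+SG)^{-1}S$ is itself symmetric positive semidefinite with $X\preceq S$ — which the proof of Lemma~\ref{lemma:general-psd-bound} actually delivers — so that one application of Cauchy--Schwarz in the $X$‑seminorm, with $S$ dominating both Rayleigh quotients, suffices. (The degenerate case where some $\weight_m$ vanish needs only a one‑line check: those coordinates decouple, $\coef_m=\beta_m=0$, and $I+K^\top SK\succ 0$ with $I+SG$ still invertible, so nothing changes.)
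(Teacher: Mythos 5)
Your proof is correct and arrives at exactly the bound in the lemma, but it takes a noticeably different route from the paper's. The paper applies Woodbury to write $\coef = D^{-1}B^2\big(w - X B^2 w\big)$ with $X = (I + CD^{-1}B^2)^{-1}CD^{-1}$, so it invokes Lemma~\ref{lemma:general-psd-bound} as stated with $S = CD^{-1} = D^{-1}GD^{-1}$ and $M = B^2$: the entrywise estimate $\max_{ij}|X_{ij}| \le \max_{ij}|(CD^{-1})_{ij}| \le (1+\xidiv)\|D^{-1}\|$ combined with $|w_k| \le 1+\qdiv$ and $\sum_k B_{kk}^2 = \|B\|_\frob^2$ gives the constant directly via the triangle inequality, with no Cauchy--Schwarz. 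You instead symmetrize differently, factor $D + B^2C = D(I+SG)$ with $S = D^{-1}B^2D^{-1}$, write $\coef = X(Dw)$ for the \emph{symmetric} $X = (I+SG)^{-1}S$, and use the operator inequality $0 \preceq X \preceq S$ together with \cauchyschwarz{} for the $X$-seminorm. Your diagnosis of the obstacle is accurate \emph{for your own decomposition}: on the quadratic form $c_j^\top X (Dw)$ the entrywise max bound would indeed lose a dimension factor, which is why you need the sharper operator inequality. But that inequality is not in the statement of Lemma~\ref{lemma:general-psd-bound} --- you have to re-open its proof to extract $X = S^{1/2}YS^{1/2}$ with $0 \preceq Y \preceq I$. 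The paper's choice of factorization is precisely what lets it get away with the weaker, stated entrywise bound. Net effect: your argument is arguably cleaner conceptually (one Cauchy--Schwarz, explicit psd structure of $X$), the paper's is more economical in that it uses Lemma~\ref{lemma:general-psd-bound} as a black box. Both yield the identical constant $\qdiv + (1+\qdiv)(1+\xidiv)\|D^{-1}\|\|B\|_\frob^2$.
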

\begin{proof}
  Apply the Woodbury formula to $\coef = (D + B^2 \cdot C)^{-1} B^2 w$ to obtain
  \begin{align*}
    \coef &= D^{-1} B^2 \big(I - X B^2\big) w && \text{where} && X = \big(I + C D^{-1} B^2\big)^{-1} C D^{-1}.
  \end{align*}
  If we let $G$ denote the Gram matrix $G_{ij} = \inner{\xipol_{n -
      1}^i}{\xipol_{n - 1}^j}$ then $C = D^{-1} G$ (in the same way as in the
  proof of Lemma~\ref{lemma:identity-rhos}).
  It follows that $C D^{-1} = D^{-1} G D^{-1}$ is positive semidefinite (because
  $D$ is positive definite).
  By Lemma~\ref{lemma:general-psd-bound} (with $S = C D^{-1}$ and $M = B^2$), it
  follows that
  \begin{align*}
    \max_{ij} |X_{ij}| \leq \max_{ij} \big|(C D^{-1})_{ij}\big| \leq (1 + \xidiv) \|D^{-1}\|.
  \end{align*}
  Finally, the $j$th entry of $\coef$ satisfies $\coef_j = \beta_j\big(w_j - ( X B^2
  w )_j\big)$, hence
  \begin{align*}
    \big|\coef_j - \beta_j\big| = \beta_j \bigg| (w_j - 1) - \sum_{k = \sdim + 1}^{\mdim} X_{jk} B_{kk}^2 w_k \bigg|,
  \end{align*}
  which we can bound with the triangle inequality and using $|w_k - 1| \leq
  \qdiv$ for all $k$ due to~\eqref{eq:def-kappa-tau}.
\end{proof}

\begin{corollary}\label{cor:bound-sigma}
  Under the assumptions (and with the notation) of
  Lemma~\ref{lemma:identity-rhos-2}, and with $\xidiv, \qdiv$ as
  in~\eqref{eq:def-kappa-tau}, the entries of $\coef$ satisfy
  \begin{align*}
    |\coef_j| \leq (1 + \qdiv)\Big(1 + (1 + \xidiv)\|D^{-1}\|\|B\|_\frob^2\Big) D_{jj}^{-1} B_{jj}^2.
  \end{align*}
\end{corollary}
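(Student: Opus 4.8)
The plan is to obtain this as an immediate consequence of Lemma~\ref{lemma:equiv-sigma} together with the triangle inequality. First I would record that, under the hypotheses of Lemma~\ref{lemma:identity-rhos-2}, the diagonal matrix $D$ is positive definite: indeed, in the proof of Lemma~\ref{lemma:identity-rhos} one sees $\|\prexipol_{n-1}^j\|^2 > 0$ for every $j$ since $n - 1 < \grade$, and the rescaling passing from $\hat D$ to $D$ in Lemma~\ref{lemma:identity-rhos-2} preserves positivity. Meanwhile $B^2 = \diag(\weight_{\sdim+1}^2, \dots, \weight_\mdim^2)$ has nonnegative diagonal entries. Hence each reference quantity $\beta_j = D_{jj}^{-1} B_{jj}^2$ appearing in Lemma~\ref{lemma:equiv-sigma} is nonnegative, so $|\beta_j| = \beta_j$.

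Then I would write $|\coef_j| \leq |\coef_j - \beta_j| + |\beta_j| = |\coef_j - \beta_j| + \beta_j$ and bound the first summand via Lemma~\ref{lemma:equiv-sigma}, which gives
\begin{align*}
  |\coef_j| \leq \Big( 1 + \qdiv + (1 + \qdiv)(1 + \xidiv)\|D^{-1}\|\|B\|_\frob^2 \Big) \beta_j.
\end{align*}
The remaining step is purely algebraic: the constant in parentheses factors as $(1 + \qdiv)\big(1 + (1 + \xidiv)\|D^{-1}\|\|B\|_\frob^2\big)$, and substituting $\beta_j = D_{jj}^{-1}B_{jj}^2$ produces exactly the claimed inequality. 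I do not anticipate any genuine obstacle here — this is a one-line corollary; the only point requiring a moment's attention is the nonnegativity of $\beta_j$, which is why I would make the positive definiteness of $D$ explicit at the outset.
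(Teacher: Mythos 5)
Your proof is correct and matches the paper's own argument exactly: the paper's proof of this corollary is the one-liner ``the triangle inequality gives $|\coef_j| \leq \beta_j + |\coef_j - \beta_j|$; conclude with Lemma~\ref{lemma:equiv-sigma}.'' You additionally spell out the factorization of the constant and justify $\beta_j \geq 0$ via positive definiteness of $D$ (an implicit step in the paper), which is a welcome clarification.
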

\begin{proof}
  The triangle inequality gives $|\coef_j| \leq \beta_j + |\coef_j - \beta_j|$.
  Conclude with Lemma~\ref{lemma:equiv-sigma}.
\end{proof}

The bounds given in Lemma~\ref{lemma:equiv-sigma} and
Corollary~\ref{cor:bound-sigma} involve the entries of the
matrix $D$ (defined in Lemma~\ref{lemma:identity-rhos-2}).
We need to control them, and more specifically, to lower-bound
the norms $\|\xipol_{n - 1}^j\|$ for $j \in \{\sdim + 1, \dots, \mdim\}$.
We now (implicitly) exhibit an iteration for which we can secure such bounds.
Remember that $\grade$ denotes the grade of $(\mat, \weight)$.

\begin{lemma}\label{lemma:c-existence}
  We use the assumptions and notation of Lemma~\ref{lemma:identity-rhos-2}, and
  $\xidiv, \qdiv$ are as in~\eqref{eq:def-kappa-tau}.
  Given a parameter $\cparam \in \interval[open
  left]{0}{\lambda_\sdim^{-1}\|\weight\|^2}$, there exists an iteration $n \in
  \{1, \dots, \grade\}$ such that $\|\qpol_n\|^2 \leq \lambda_1 \cparam$ and
  $\|\xipol_{n - 1}^j\|^2 \geq \lambda_\sdim \cparam$ for all $j \in \{\sdim +
  1, \dots, \mdim\}$.
  In particular, this implies $\|\coef\|_1 \leq \omega$ at iteration $n$, where
  \begin{align*}
    \eta = \qdiv + (1 + \qdiv)(1 + \xidiv)^2 (\lambda_\sdim \cparam)^{-1} \|B\|_\frob^2 && \text{and} && \omega = (1 + \eta)(1 + \xidiv)(\lambda_\sdim \cparam)^{-1}\|B\|_\frob^2.
  \end{align*}
\end{lemma}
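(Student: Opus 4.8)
The plan is to pin down the iteration $n$ by tracking the optimal value of the \cg{} minimization problem~\eqref{eq:cg-pol-min} as the degree grows, to read off the two claimed inequalities from the extremal characterizations of $\qpol_n$ and $\xipol_{n-1}^j$, and finally to feed the resulting lower bound on $\|\xipol_{n-1}^j\|^2$ into Corollary~\ref{cor:bound-sigma}. Concretely, for $m \in \{0, \dots, \grade\}$ put $\Phi(m) = \sum_{i=1}^\sdim \lambda_i^{-1}\qpol_m(\lambda_i)^2 \weight_i^2$; by Lemma~\ref{lemma:cg-pol-min} (valid since $\lambda_\sdim > 0$) this is precisely the optimal value of~\eqref{eq:cg-pol-min} at degree $m$, so $\Phi(m) \le \sum_{i=1}^\sdim \lambda_i^{-1}\qol(\lambda_i)^2 \weight_i^2$ for every $\qol \in \pols{m}$ with $\qol(0) = 1$. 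Note that $\Phi(0) = \sum_{i=1}^\sdim \lambda_i^{-1}\weight_i^2 \le \lambda_\sdim^{-1}\|\weight\|^2$, while $\Phi(\grade) = 0$ because $\|\rhopol_\grade\| = 0$ (Remark~\ref{remark:grade}) forces $\weight_i \qpol_\grade(\lambda_i) = 0$ for all $i$. Since $\cparam > 0$, I would take $n$ to be the smallest index in $\{1, \dots, \grade\}$ with $\Phi(n) \le \cparam$; this is well defined, as $\grade$ qualifies.

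The two inequalities then fall out by sandwiching the eigenvalues against $\Phi$. For the first, $\|\qpol_n\|^2 = \sum_{i=1}^\sdim \qpol_n(\lambda_i)^2\weight_i^2 \le \lambda_1 \sum_{i=1}^\sdim \lambda_i^{-1}\qpol_n(\lambda_i)^2\weight_i^2 = \lambda_1\Phi(n) \le \lambda_1\cparam$. For the second, fix $j \in \{\sdim+1, \dots, \mdim\}$; then $\|\xipol_{n-1}^j\|^2 = \sum_{i=1}^\sdim \xipol_{n-1}^j(\lambda_i)^2\weight_i^2 \ge \lambda_\sdim \sum_{i=1}^\sdim \lambda_i^{-1}\xipol_{n-1}^j(\lambda_i)^2\weight_i^2 \ge \lambda_\sdim \Phi(n-1)$, the last step because $\xipol_{n-1}^j$ has degree at most $n-1$ with $\xipol_{n-1}^j(0) = 1$, hence is feasible (though in general not optimal) for~\eqref{eq:cg-pol-min} at degree $n-1$. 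If $n \ge 2$, minimality of $n$ gives $\Phi(n-1) > \cparam$ and we are done; if $n = 1$, then $\xipol_0^j$ is the constant polynomial $1$, so $\|\xipol_0^j\|^2 = \|\weight\|^2 > \lambda_\sdim\cparam$ by the hypothesis $\cparam < \lambda_\sdim^{-1}\|\weight\|^2$.

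For the bound $\|\coef\|_1 \le \omega$, I would invoke Corollary~\ref{cor:bound-sigma} at this $n$ and control its ingredients using the inequality just established. From Lemma~\ref{lemma:identity-rhos-2}, $D_{jj} = \|\xipol_{n-1}^j\|^2/\xipol_{n-1}^j(\lambda_j)$: the numerator is at least $\lambda_\sdim\cparam$, and for the denominator the roots of $\xipol_{n-1}^j$ lie in $\interval{\lambda_\sdim}{\lambda_1}$ (Lemma~\ref{lemma:roots-interlace}, applicable since $\lambda_j < \lambda_\sdim$ by the eigenvalue gap), so $\xipol_{n-1}^j(\lambda_j) = \prod_i(1 - \lambda_j/\mu_i)$ is a product of positive factors ($\mu_i \ge \lambda_\sdim > \lambda_j$) and, by the definition~\eqref{eq:def-kappa-tau} of $\xidiv$ with $i = j$, satisfies $\xipol_{n-1}^j(\lambda_j) \le 1 + \xidiv$. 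Hence $D_{jj}^{-1} \le (1+\xidiv)/(\lambda_\sdim\cparam)$ for every $j$, and consequently $\|D^{-1}\| \le (1+\xidiv)/(\lambda_\sdim\cparam)$ as well. Substituting these into Corollary~\ref{cor:bound-sigma}, using the algebraic identity $1 + \eta = (1+\qdiv)\big(1 + (1+\xidiv)^2(\lambda_\sdim\cparam)^{-1}\|B\|_\frob^2\big)$, and summing over $j \in \{\sdim+1,\dots,\mdim\}$ with $\sum_j B_{jj}^2 = \|B\|_\frob^2$, collapses the per-coordinate bound $|\coef_j| \le (1+\eta)(1+\xidiv)(\lambda_\sdim\cparam)^{-1}B_{jj}^2$ to $\|\coef\|_1 \le \omega$.

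The crux, and the one step that is not pure bookkeeping, is the choice of $n$: one wants $\|\qpol_n\|^2$ small \emph{and} every $\|\xipol_{n-1}^j\|^2$ not too small \emph{at the same index}, and these demands pull against one another as the degree increases. The resolution is that the single scalar $\Phi$ mediates both --- $\Phi(n) \le \cparam$ drives $\|\qpol_n\|^2$ down (up to the factor $\lambda_1$) while $\Phi(n-1)$ holds $\|\xipol_{n-1}^j\|^2$ up (up to the factor $\lambda_\sdim$), the latter precisely because $\xipol_{n-1}^j$ is a feasible point for the degree-$(n-1)$ instance of~\eqref{eq:cg-pol-min} and minimality of $n$ keeps $\Phi(n-1) > \cparam$. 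The remaining care is for the degenerate case $n = 1$ (handled directly above) and for the positivity of $\xipol_{n-1}^j(\lambda_j)$, which is exactly where the gap hypothesis $\lambda_\sdim > \lambda_{\sdim+1}$ enters.
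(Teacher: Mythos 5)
Your proposal is correct and follows essentially the same line as the paper: both proofs pick $n$ as the minimal degree at which the optimal value of the weighted problem~\eqref{eq:cg-pol-min} drops to $\cparam$, read off $\|\qpol_n\|^2 \leq \lambda_1\cparam$ from optimality of $\qpol_n$, get $\|\xipol_{n-1}^j\|^2 \geq \lambda_\sdim\cparam$ from feasibility of $\xipol_{n-1}^j$ at degree $n-1$ together with minimality of $n$ (with the $n=1$ case handled separately), and then feed the resulting bound on $\|D^{-1}\|$ into Corollary~\ref{cor:bound-sigma}. One small slip: the hypothesis is $\cparam \leq \lambda_\sdim^{-1}\|\weight\|^2$ (the interval is closed at its right endpoint), so in the $n=1$ case you only get $\|\xipol_0^j\|^2 = \|\weight\|^2 \geq \lambda_\sdim\cparam$ rather than a strict inequality; this is exactly what the lemma claims, so nothing is lost.
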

\begin{proof}
  Let $n \in \{1, \dots, \grade\}$ be the \emph{smallest} integer such that
  \begin{align}\label{eq:qpol-proof-opt}
    \min_{\qpol \in \pols{n}} \; \sum_{j = 1}^\sdim \frac{\qpol(\lambda_j)^2}{\lambda_j} \weight_j^2 \leq \cparam \qquad \text{subject to} \qquad \qpol(0) = 1.
  \end{align}
  Such an integer exists because when $n = \grade$ the minimum
  value of the above optimization problem is zero (by picking a polynomial
  whose roots are exactly the eigenvalues of $\mat$ with positive weight, see
  Remark~\ref{remark:grade}).
  Lemma~\ref{lemma:cg-pol-min} states that $\qpol_n$ is the solution of the
  optimization problem~\eqref{eq:qpol-proof-opt}.
  From the definition of the semi-norm $\|\cdot\|$
  in~\eqref{eq:both-semi-inner-product} it follows that $\|\qpol_n\|^2 \leq \lambda_1
  \cparam$.
  Since $n$ is the smallest integer with the property above, the cost function
  of~\eqref{eq:qpol-proof-opt} is larger than $c$ when evaluated at any feasible
  polynomial of degree $n-1$ or less.
  In particular, for all $j$ the polynomial $\xipol_{n - 1}^j$ is feasible
  for~\eqref{eq:qpol-proof-opt} because $\xipol_{n - 1}^j(0) = 1$.
  Again from the definition of the semi-norm $\|\cdot\|$
  in~\eqref{eq:both-semi-inner-product}, we deduce that $\|\xipol_{n - 1}^j\|^2 \geq
  \lambda_\sdim \cparam$ for all $j \in \{\sdim + 1, \dots, \mdim\}$.
  (This inequality also holds when $n = 1$ because $\xipol_0^j \equiv 1$ and we
  assumed $c \leq \lambda_\sdim^{-1}\|\weight\|^2$.)
  In particular, it implies that $\|D^{-1}\| \leq (1 + \xidiv)(\lambda_\sdim
  \cparam)^{-1}$.
  Apply Corollary~\ref{cor:bound-sigma} to obtain
  \begin{align*}
    \|\coef\|_1 \leq (1 + \qdiv)\Big(1 + (1 + \xidiv)\|D^{-1}\|\|B\|_\frob^2\Big) \sum_{j = \sdim + 1}^\mdim D_{jj}^{-1} B_{jj}^2 \leq (1 + \eta)(1 + \xidiv)(\lambda_\sdim c)^{-1}\|B\|_\frob^2 = \omega,
  \end{align*}
  where $\eta$ and $\omega$ are defined in this lemma statement.
\end{proof}

Provided the specific iteration $n$ from Lemma~\ref{lemma:c-existence} is
well defined on the problem $(\tmat, \tweight)$,
we can derive upper-bounds for the norms of its
corresponding iterate and residual.
We could obtain such bounds from Theorem~\ref{th:bounds-iterates}.
Here we obtain different ones using the special regime~\eqref{eq:effective-regime}.

\begin{lemma}\label{lemma:c-bounds}
  We use the assumptions and notation of Lemma~\ref{lemma:identity-rhos-2}, and
  $\xidiv, \qdiv$ are as in~\eqref{eq:def-kappa-tau}.
  Given a parameter $\cparam \in \interval[open
  left]{0}{\lambda_\sdim^{-1}\|\weight\|^2}$, let $\eta$ and $\omega$ be as in
  Lemma~\ref{lemma:c-existence}, and assume that $\omega < 1$.
  Then there exists an integer $n \in \{1, \dots, \grade\}$ such that the $n$th
  iteration of \cg{} on $(\tmat, \tweight)$ is well defined.
  Moreover, the $n$th iterate $\tilde \cgvec_n$ and residual $\tilde \cgres_n$
  satisfy
  \begin{align*}
    \|\tilde \cgvec_n\| \leq \frac{1}{(1 - \omega)
    \lambda_\sdim} \|\tweight\| && \text{and} && \|\tilde \cgres_n\|^2 \leq \frac{1}{(1 - \omega)^2}\bigg(
                     \lambda_1 \cparam + (1 + \eta)(1 + \xidiv)\omega\|B\|_\frob^2 + ( 1 +
                     \qdiv )^2 \|B\|_\frob^2 \bigg).
  \end{align*}
\end{lemma}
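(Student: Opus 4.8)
The plan is to take the iteration $n$ produced by Lemma~\ref{lemma:c-existence} and to bound its iterate and residual separately. Lemma~\ref{lemma:c-existence} supplies $n \in \{1, \dots, \grade\}$ with $\|\qpol_n\|^2 \le \lambda_1 \cparam$, with $\|\xipol_{n-1}^j\|^2 \ge \lambda_\sdim \cparam$ for all $j \in \{\sdim+1, \dots, \mdim\}$, and with $\|\coef\|_1 \le \omega$; since we assume $\omega < 1$ this gives $\|\coef\|_1 < 1$, so Theorem~\ref{th:identity-qs} guarantees that the $n$th iteration of \cg{} on $(\tmat, \tweight)$ is well defined and that identity~\eqref{eq:identity-qs} holds, with $s = \ones^\top \coef$ satisfying $|s| \le \|\coef\|_1 \le \omega < 1$, hence $1 - s \ge 1 - \omega > 0$.

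For the iterate bound, I would exploit that, iteration $n$ being well defined, $\tmat$ restricted to the Krylov subspace $\tilde \krylovspace_n$ is positive definite, so by the minimization property~\eqref{eq:cg-iterates-minimize} one can write $\tilde \cgvec_n = \tilde Q_n (\tilde Q_n^\top \tmat \tilde Q_n)^{-1} \tilde Q_n^\top \tweight$ for any orthonormal basis $\tilde Q_n$ of $\tilde \krylovspace_n$, whence $\|\tilde \cgvec_n\| \le \|(\tilde Q_n^\top \tmat \tilde Q_n)^{-1}\| \, \|\tweight\|$. The eigenvalues of $\tilde Q_n^\top \tmat \tilde Q_n$ are the Ritz values, i.e.\ the roots of $\trhopol_n$ (Lemma~\ref{lemma:roots-and-eigenvalues}), which by Corollary~\ref{cor:roots-lower-bound} are at least $(1 - \|\coef\|_1)\gamma_n \ge (1-\omega)\gamma_n$, where $\gamma_n$ is the smallest root of $\rhopol_n$, and $\gamma_n \ge \lambda_\sdim$ by Lemma~\ref{lemma:rho-basic-props}. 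Thus $\|(\tilde Q_n^\top \tmat \tilde Q_n)^{-1}\| \le \big((1-\omega)\lambda_\sdim\big)^{-1}$, giving the first bound.

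For the residual bound, recall $\|\tilde \cgres_n\| = \|\tqpol_n\|_\sim$ with $\tqpol_n = \trhopol_n/\trhopol_n(0)$. Since $\trhopol_n$ is the monic \lanczos{} polynomial for $\inner{\cdot}{\cdot}_\sim$, the optimality characterization of Lemma~\ref{lemma:rho-opt-cond} (applied to $\inner{\cdot}{\cdot}_\sim$) gives $\inner{\rhopol_n}{\trhopol_n}_\sim = \|\trhopol_n\|_\sim^2$, and evaluating~\eqref{eq:rho-identity-2} at $0$ gives $\trhopol_n(0) = (1-s)\rhopol_n(0)$; combining these yields $\|\tqpol_n\|_\sim^2 = (1-s)^{-1}\inner{\qpol_n}{\tqpol_n}_\sim$. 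I would then substitute $\tqpol_n$ from~\eqref{eq:identity-qs} and use the orthogonality $\inner{\qpol_n}{\xipol_{n-1}^j} = 0$ (valid since $\deg \xipol_{n-1}^j \le n-1$ and $\qpol_n \propto \rhopol_n$, Lemma~\ref{lemma:rho-opt-cond}) to strip off the part supported on the first $\sdim$ eigenvalues, obtaining
\[
  \|\tqpol_n\|_\sim^2 = \frac{1}{(1-s)^2}\left( \|\qpol_n\|^2 + \sum_{i=\sdim+1}^\mdim \weight_i^2 \qpol_n(\lambda_i)^2 - \sum_{i=\sdim+1}^\mdim \weight_i^2 \qpol_n(\lambda_i) \sum_{j=\sdim+1}^\mdim \coef_j \xipol_{n-1}^j(\lambda_i) \right).
\]
Then bound the first term by $\lambda_1 \cparam$ (Lemma~\ref{lemma:c-existence}), the second by $(1+\qdiv)^2 \|B\|_\frob^2$, and the third in absolute value by $(1+\qdiv)(1+\xidiv)\|\coef\|_1 \|B\|_\frob^2 \le (1+\eta)(1+\xidiv)\omega \|B\|_\frob^2$, using $|\qpol_n(\lambda_i)-1| \le \qdiv$ and $|\xipol_{n-1}^j(\lambda_i)-1| \le \xidiv$ from~\eqref{eq:def-kappa-tau}, $\sum_{i>\sdim}\weight_i^2 = \|B\|_\frob^2$, $\|\coef\|_1 \le \omega$, and $\qdiv \le \eta$; finally replace $(1-s)^{-2}$ by $(1-\omega)^{-2}$ to reach the stated bound.

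The main obstacle is that the tempting route of feeding Theorem~\ref{th:bounds-iterates} does not work cleanly: its bound on $|\tilde \cgvec_i|$ for $i > \sdim$ carries the factor $(1-\lambda_i/\lambda_\sdim)^n$, which blows up in $n$ when a small eigenvalue is negative, so iteration‑independent bounds must instead come from the Ritz‑value lower bound of Corollary~\ref{cor:roots-lower-bound} and from the exact identity for $\|\tqpol_n\|_\sim^2$ together with the orthogonality $\inner{\qpol_n}{\xipol_{n-1}^j}=0$. The only delicate points are checking $\rhopol_n(0) \neq 0$ and $\trhopol_n(0) = (1-s)\rhopol_n(0) \neq 0$ (both follow from $\lambda_\sdim > 0$, which forces the roots of $\rhopol_n$ and $\trhopol_n$ to be positive, together with~\eqref{eq:rho-identity-2}), and remembering that the residual identities implicitly rely on iteration $n$ being well defined, which is exactly what Theorem~\ref{th:identity-qs} provides under $\|\coef\|_1 < 1$.
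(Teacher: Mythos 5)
The proposal is correct. The first half (iterate bound) is exactly the paper's argument: Corollary~\ref{cor:roots-lower-bound} lower-bounds the Ritz values by $(1-\omega)\lambda_\sdim$, and the $Q_n(Q_n^\top\tmat Q_n)^{-1}Q_n^\top$ representation gives $\|\tilde\cgvec_n\|\le ((1-\omega)\lambda_\sdim)^{-1}\|\tweight\|$. For the residual, you take a genuinely different (and slightly cleaner) route. The paper expands $(1-s)^{-2}\|\qpol_n - p\|_\sim^2$ by Pythagoras (using $\inner{\qpol_n}{p}=0$) into $\|\qpol_n\|^2 + \|p\|^2 + \sum_{i>\sdim}(\qpol_n(\lambda_i)-p(\lambda_i))^2\weight_i^2$, then bounds $\|p\|$ using Corollary~\ref{cor:bound-sigma} (per-entry $|\coef_j|\le(1+\eta)D_{jj}^{-1}B_{jj}^2$ combined with $\|\xipol_{n-1}^j\|\ge\sqrt{\lambda_\sdim\cparam}$), and bounds the tail using Lemma~\ref{lemma:bound-diff-q-xipol}. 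You instead derive the identity $\|\tqpol_n\|_\sim^2 = (1-s)^{-1}\inner{\qpol_n}{\tqpol_n}_\sim$ from Lemma~\ref{lemma:rho-opt-cond} applied to $\inner{\cdot}{\cdot}_\sim$ and $\trhopol_n(0)=(1-s)\rhopol_n(0)$, which is equivalent (because $\inner{p}{\trhopol_n}_\sim=0$ since $\deg p\le n-1$), and it lands you on $\|\qpol_n\|^2 + \sum_{i>\sdim}\weight_i^2\qpol_n(\lambda_i)^2 - \sum_{i>\sdim}\weight_i^2\qpol_n(\lambda_i)p(\lambda_i)$. You then bound the two tail terms directly from $|\qpol_n(\lambda_i)|\le 1+\qdiv$, $|\xipol_{n-1}^j(\lambda_i)|\le 1+\xidiv$ and $\|\coef\|_1\le\omega$, finishing with $\qdiv\le\eta$ and $(1-s)^{-2}\le(1-\omega)^{-2}$. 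This reaches the same constants while bypassing Corollary~\ref{cor:bound-sigma} and Lemma~\ref{lemma:bound-diff-q-xipol} entirely, which is a modest but real simplification. All steps check out, including the implicit well-definedness of $\tqpol_n$ and the non-vanishing of $\rhopol_n(0)$ and $\trhopol_n(0)$.
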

\begin{proof}
  Let $n$ be an integer as in Lemma~\ref{lemma:c-existence}.
  Since $\|\coef\|_1 \leq \omega < 1$, we can apply Theorem~\ref{th:identity-qs}
  and deduce that the $n$th iteration of \cg{} on $(\tmat, \tweight)$ is well
  defined.
  Corollary~\ref{cor:roots-lower-bound} gives that $(1 - \omega) \lambda_\sdim$
  is a lower-bound on the roots of $\tqpol_n$.
  Let $\tilde \krylovspace_n$ be the $n$th Krylov space associated to $(\tmat,
  \tweight)$.
  Lemma~\ref{lemma:roots-and-eigenvalues} ensures that
  $\restr{\tmat}{\tilde \krylovspace_n}{\normalscaling} \succeq (1 -
  \omega) \lambda_\sdim I$.
  As recalled in Section~\ref{par:connections-cg}, if we let $Q_n$ be an
  orthonormal basis of $\tilde \krylovspace_n$ then we can write the $n$th
  iterate of \cg{} as
  \begin{align*}
    \tilde \cgvec_n = Q_n\big(Q_n^\top \tmat Q_n^{} \big)^{-1}Q_n^\top \tweight.
  \end{align*}
  It follows that $\|\tilde \cgvec_n\| \leq (1 - \omega)^{-1} \lambda_\sdim^{-1}
  \|\tweight\|$.
  We now bound the norm of the residual.
  The identity~\eqref{eq:identity-qs} and the
  definition~\eqref{eq:both-semi-inner-product} of $\|\cdot\|_\sim$ give
  \begin{align*}
    \|\tilde \cgres_n\|^2 = \|\tqpol_n\|_\sim^2 = \frac{1}{(1 - s)^{2}} \bigg( \|\qpol_n\|^2 + \|p\|^2 + \sum_{i = \sdim + 1}^\mdim \Big(\qpol_n(\lambda_i) - \pol(\lambda_i)\Big)^2 \weight_i^2 \bigg) && \text{where} && \pol = \sum_{j = \sdim + 1}^\mdim \coef_j \xipol_{n - 1}^j
  \end{align*}
  and $s = \ones^\top \coef$.
  Here, we used the fact that $\inner{\qpol_n}{p} = 0$.
  We now bound each term separately.
  First, we have $\|\qpol_n\|^2 \leq \lambda_1 c$
  (Lemma~\ref{lemma:c-existence}).
  The triangle inequality and Corollary~\ref{cor:bound-sigma} give
  \begin{align*}
    \|\pol\| \leq \sum_{j = \sdim + 1}^\mdim \|\coef_j \xipol_{n - 1}^j\| \leq (1 + \eta) \sum_{j = \sdim + 1}^\mdim D_{jj}^{-1} B_{jj}^2 \|\xipol_{n - 1}^j\| = (1 + \eta) \sum_{j = \sdim + 1}^\mdim \xipol_{n - 1}^j(\lambda_j) \|\xipol_{n - 1}^j\|^{-1} B_{jj}^2,
  \end{align*}
  where we used the definition of $D$ in Lemma~\ref{lemma:identity-rhos-2} for
  the last equality.
  It follows that $\|\pol\|^2 \leq (1 + \eta)^2(1 + \xidiv)^2(\lambda_\sdim
  \cparam)^{-1}\|B\|_\frob^4$.
  We now consider the third term.
  For all $i \in \{\sdim + 1, \dots, \mdim\}$ we have
  \begin{align*}
    \qpol_n(\lambda_i) - \pol(\lambda_i) = (1 - s)\qpol_n(\lambda_i) + \sum_{j = \sdim + 1}^\mdim \coef_j\big( \qpol_n(\lambda_i) - \xipol_{n - 1}^j(\lambda_i) \big).
  \end{align*}
  By Lemma~\ref{lemma:bound-diff-q-xipol} and since $|s| \leq \|\coef\|_1 \leq \omega < 1$, the above is upper-bounded in absolute value as
  \begin{align*}
      |\qpol_n(\lambda_i) - \pol(\lambda_i)| \leq
        (1-s)\left( 1 + \qdiv + \frac{\|\coef\|_1}{1-s} \qdiv \right) \leq
        (1-s) \left( 1 + \qdiv + \frac{\qdiv \omega}{1 - \omega} \right) \leq
        (1-s) \frac{1 + \qdiv}{1 - \omega}.
  \end{align*}
  Thus,
  \begin{align*}
    \frac{1}{(1 - s)^2} \sum_{i = \sdim + 1}^\mdim \Big(\qpol_n(\lambda_i) - \pol(\lambda_i)\Big)^2 \weight_i^2 \leq \frac{1}{(1-\omega)^2} \left( 1 + \qdiv \right)^2 \|B\|_\frob^2.
  \end{align*}
  Combine the three bounds above
  to obtain the announced inequality.
\end{proof}

We conclude this section with two bounds on the quantities $\xidiv$ and $\qdiv$,
showing that they are close to zero in the regime~\eqref{eq:effective-regime}.

\begin{lemma}\label{lemma:bounds-kappa-tau}
  Suppose that $\lambda_\sdim > 0$ and $\lambda_\sdim > \lambda_{\sdim + 1}$.
  For all $n \in \{1, \dots, \grade\}$, the quantities $\xidiv$ and $\qdiv$, as
  defined in~\eqref{eq:def-kappa-tau}, are bounded as
  \begin{align*}
    \xidiv \leq \big(1 + \varepsilon/\lambda_\sdim\big)^{\grade - 1} - 1 && \text{and} && \qdiv \leq \big(1 + \varepsilon/\lambda_\sdim\big)^\grade - 1,
  \end{align*}
  where $\varepsilon = \max_{j = \sdim + 1, \dots, \mdim} \,|\lambda_j|$.
\end{lemma}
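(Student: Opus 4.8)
The plan is to write each polynomial $\qpol_n$ and $\xipol_{n-1}^j$ as a product of linear factors indexed by its roots, and then exploit that all those roots are bounded below by $\lambda_\sdim$, so that evaluating at a point $\lambda_i$ with $|\lambda_i| \le \varepsilon$ moves each factor only slightly away from $1$.

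First I would recall that $\qpol_n = \rhopol_n/\rhopol_n(0)$ is a degree-$n$ polynomial with $\qpol_n(0) = 1$, and by Lemma~\ref{lemma:rho-basic-props} its roots $\gamma_1, \dots, \gamma_n$ are real and lie in $\interval{\lambda_\sdim}{\lambda_1}$; since $\lambda_\sdim > 0$ they are all positive, hence $\qpol_n(x) = \prod_{k=1}^n(1 - x/\gamma_k)$. Likewise, for $j \in \{\sdim+1, \dots, \mdim\}$ the polynomial $\xipol_{n-1}^j = \prexipol_{n-1}^j/\prexipol_{n-1}^j(0)$ satisfies $\xipol_{n-1}^j(0) = 1$ and, when $n \ge 2$, has degree $n-1$ with roots $\mu_1^j, \dots, \mu_{n-1}^j$ in $\interval{\lambda_\sdim}{\lambda_1}$, by Lemma~\ref{lemma:roots-interlace} applied with the parameter $\pxplambda = \lambda_j$ of Definition~\ref{def:prexipol-j}; this invocation is legitimate precisely because the eigenvalue gap $\lambda_\sdim > \lambda_{\sdim+1}$ forces $\lambda_j \le \lambda_{\sdim+1} < \lambda_\sdim$. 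Thus $\xipol_{n-1}^j(x) = \prod_{k=1}^{n-1}(1 - x/\mu_k^j)$, while for $n = 1$ one simply has $\xipol_0^j \equiv 1$.

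The second ingredient is the elementary estimate $\bigl|\prod_{k=1}^m(1 + a_k) - 1\bigr| \le \prod_{k=1}^m(1 + |a_k|) - 1$ for arbitrary reals $a_1, \dots, a_m$, obtained by expanding both sides over the nonempty subsets of $\{1, \dots, m\}$ and applying the triangle inequality termwise. I would apply it with $a_k = -\lambda_i/\gamma_k$ (respectively $a_k = -\lambda_i/\mu_k^j$): since $|\lambda_i| \le \varepsilon$ and every root is at least $\lambda_\sdim$, each $|a_k| \le \varepsilon/\lambda_\sdim$, so $\bigl|\qpol_n(\lambda_i) - 1\bigr| \le (1 + \varepsilon/\lambda_\sdim)^n - 1$ and $\bigl|\xipol_{n-1}^j(\lambda_i) - 1\bigr| \le (1 + \varepsilon/\lambda_\sdim)^{n-1} - 1$ for all $i, j \in \{\sdim+1, \dots, \mdim\}$. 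Because $n \le \grade$ and $m \mapsto (1 + \varepsilon/\lambda_\sdim)^m - 1$ is nondecreasing in $m \ge 0$, taking maxima over $i$ and $j$ yields exactly $\qdiv \le (1 + \varepsilon/\lambda_\sdim)^\grade - 1$ and $\xidiv \le (1 + \varepsilon/\lambda_\sdim)^{\grade-1} - 1$.

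I do not anticipate a genuine obstacle here: the only points needing care are confirming that Lemma~\ref{lemma:roots-interlace} applies (settled by the gap hypothesis) and the $n = 1$ edge case for $\xidiv$, where $\xipol_0^j \equiv 1$ makes the claimed inequality read $0 \le 0$. An alternative to the subset-expansion estimate is to split on the sign of $\lambda_i$ — for $\lambda_i \ge 0$ all factors of $\qpol_n(\lambda_i)$ lie in $\interval{1 - \varepsilon/\lambda_\sdim}{1}$ and for $\lambda_i \le 0$ in $\interval{1}{1 + \varepsilon/\lambda_\sdim}$ — and then use $1 - (1-t)^n \le (1+t)^n - 1$ for $t \in \interval[open]{0}{1}$, which follows from $(1+t)^n + (1-t)^n \ge 2$.
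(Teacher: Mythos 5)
Your proof is correct and essentially identical to the paper's: both write $\qpol_n$ and $\xipol_{n-1}^j$ as products of factors $(1 - x/\gamma_k)$ over roots in $\interval{\lambda_\sdim}{\lambda_1}$, apply the estimate $\bigl|\prod_k(1+a_k)-1\bigr| \le \prod_k(1+|a_k|)-1$, and then use $n \le \grade$ with monotonicity in the exponent. The extra care you take with the $n=1$ case and with justifying the invocation of Lemma~\ref{lemma:roots-interlace} via the gap hypothesis matches what the paper implicitly relies on.
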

\begin{proof}
  Let $\gamma_1, \dots, \gamma_n \in \interval{\lambda_\sdim}{\lambda_1}$ be the
  roots of the polynomial $\qpol_n$.
  Then for all $x \leq \lambda_\sdim$ we have
  \begin{align*}
    |\qpol_n(x) - 1| = \bigg| \prod_{i = 1}^n \big( 1 - x/\gamma_i \big) - 1 \bigg| \leq \prod_{i = 1}^n \big( 1 + |x|/\gamma_i \big) - 1 \leq \big( 1 + |x|/\lambda_\sdim \big)^n - 1.
  \end{align*}
  This gives the inequality for $\qdiv$.
  A similar argument yields the inequality for $\xidiv$.
\end{proof}

\section{Application to trust-region algorithms}\label{sec:application-rtr}

In this section we study the local convergence of trust-region algorithms (TR)
to non-isolated minima, assuming the \polyakloja{} condition~\eqref{eq:pl}.
We secure superlinear convergence when the subproblem solver is \tcg{}
(Algorithm~\ref{alg:tcg}).
For this, we leverage the new analysis of \cg{} developed in
Section~\ref{sec:indefinite-cg}.

We let $\retr \colon \tangent\manifold \to \manifold$ denote a
retraction~\cite[\S4.1]{absil2008optimization}, and use the notation $\retr_x(s)
= \retr(x, s)$ for convenience.
This is a smooth map from the tangent bundle $\tangent\manifold$ such that
$\retr_x(\zeros) = x$ and $\D \retr_x(\zeros) = I$ for all $x \in \manifold$.
In the Euclidean case we usually pick $\retr_x(s) = x + s$.

TR produces a sequence $\sequence{(x_k, \Delta_k)}$, where $x_k$ is the
current iterate and $\Delta_k$ is the trust-region radius.
At iteration $k$, we define a second-order model of $\mfc$ around $x_k$ as
\begin{align}\label{eq:rtr-model}\tag{TRM}
  \rtrmodel_k(s) = \mfc(x_k) + \inner{s}{\grad \mfc(x_k)} + \frac{1}{2}\inner{s}{\linearmap_k[s]},
\end{align}
where $\linearmap_k$ is a linear map close to $\hess \mfc(x_k)$---see
Assumption~\aref{assu:hess-approx}.
From this model, a step $s_k$ is computed by (usually approximately) solving the
trust-region subproblem
\begin{align}\label{eq:rtr-subproblem}\tag{TRS}
  \min_{s_k \in \tangent_{x_k}\manifold} \rtrmodel_k(s_k) \qquad \text{subject to} \qquad \|s_k\| \leq \Delta_k.
\end{align}
The point $x_k$ and radius $\Delta_k$ are then updated depending on the ratio
\begin{align}\label{eq:rtr-rho}
  \rtrrho_k = \frac{\mfc(x_k) - \mfc(\retr_{x_k}(s_k))}{\rtrmodel_k(\zeros) - \rtrmodel_k(s_k)}.
\end{align}
(If the denominator is zero, we let $\rtrrho_k = 1$.)
Specifically, given parameters $\rtrrho' \in \interval[open]{0}{\frac{1}{4}}$
and $\bar\Delta > 0$, the update rules for the state are
\begin{align}\label{eq:rtr-dynamics}
  x_{k + 1} = \begin{cases}
                \retr_{x_k}(s_k) &\text{if $\rtrrho_k > \rtrrho'$},\\
                x_k &\text{otherwise},
              \end{cases}
  \qquad\quad
  \Delta_{k + 1} = \begin{cases}
                     \frac{1}{4}\Delta_k &\text{if $\rtrrho_k < \frac{1}{4}$},\\
                     \min(2\Delta_k, \bar\Delta) &\text{if $\rtrrho_k > \frac{3}{4}$ and $\|s_k\| = \Delta_k$},\\
                     \Delta_k &\text{otherwise}.
                   \end{cases}
\end{align}
An iteration $k$ is \emph{successful} when $\rtrrho_k > \rtrrho'$.

The main result of this section was stated as Theorem~\ref{th:rtr-main-theorem},
with three assumptions around a local minimum $\optpoint$.
The first two require that the Hessian is sufficiently regular (note that
\aref{assu:hess-lip} implies~\aref{assu:hess-lip-like} with $\hessliplikeconst =
\hesslipconst$ when $\retr = \Exp$---see for
example~\citep[Cor.~10.56]{boumal2020introduction}).
The last assumption further requires that the map $\linearmap_k$ is sufficiently
close to $\hess \mfc(x_k)$.
It holds in particular if $\linearmap_k = \hess \mfc(x_k)$ or $\linearmap_k =
\hess(\mfc \circ \retr_{x_k})(0)$.
\begin{assumption}\label{assu:hess-lip}
  The Hessian $\hess \mfc$ is locally $\hesslipconst$-Lipschitz continuous
  around $\optpoint$ for some $\hesslipconst \geq 0$.
\end{assumption}
\begin{assumption}\label{assu:hess-lip-like}
  There exists a constant $\hessliplikeconst \geq 0$ such that the
  Lipschitz-type inequality
  \begin{align}\label{eq:hess-lip-like}
    \mfc(\retr_{x}(s)) - \mfc(x) - \inner{s}{\grad \mfc(x)} - \frac{1}{2}\inner{s}{\hess \mfc(x)[s]} \leq \frac{\hessliplikeconst}{6}\|s\|^3
  \end{align}
  holds for all $x$ sufficiently close to $\optpoint$ and all $s$ sufficiently small.
\end{assumption}
\begin{assumption}\label{assu:hess-approx}
  The linear maps $\linearmap_0, \linearmap_1, \ldots$ in~\eqref{eq:rtr-model}
  are symmetric.
  There is a constant $\hessapproxconst \geq 0$ such that
  \begin{align}\label{eq:hess-approx}
    \|\linearmap_k - \hess \mfc(x_k)\| \leq \hessapproxconst \|\grad \mfc(x_k)\|
  \end{align}
  whenever the iterate $x_k$ is sufficiently close to $\optpoint$.
\end{assumption}

\subsection{The subproblem solver \tcg{}
  satisfies~\cref{cond:better-than-cauchy},~\cref{cond:strong-vs}
  and~\cref{cond:small-model}}\label{subsec:tcg-c1-c2}

Recall that \tcg{} refers to Algorithm~\ref{alg:tcg} and \cg{} refers to the
standard conjugate gradient algorithm, that is, \tcg{} without the two
truncation parts.
Given an iterate $x_k$, we feed $\mat = \linearmap_k$, $\weight = -\grad
\mfc(x_k)$ and $\Delta = \Delta_k$ to the subproblem solver \tcg{}.
It runs a certain number $n \leq \dim \manifold$ of iterations and we set the
step $s_k$ to be the resulting \tcg{} iterate $v_n$.

In Section~\ref{subsec:C0C1C2} we introduced the
conditions~\cref{cond:better-than-cauchy}, \cref{cond:strong-vs}
and~\cref{cond:small-model}.
Here we prove that \tcg{} satisfies those three conditions under~\eqref{eq:pl}.
Given an iterate $x_k$, the Cauchy step is the minimizer of the
subproblem~\eqref{eq:rtr-subproblem} with the additional constraint that $s_k
\in \vecspan \grad \mfc(x_k)$.
It satisfies the sufficient decrease condition~\cref{cond:better-than-cauchy}
with constant $\rtrsufficientdecrease = 1/2$~\citep[Thm.~6.3.1]{conn2000trust}.
The Cauchy step happens to be the first iterate of \tcg{}, and subsequent
iterates can only improve.
So, it follows that \tcg{} also satisfies~\cref{cond:better-than-cauchy} with
$\rtrsufficientdecrease = 1/2$---see~\citep[\S7.5.1]{conn2000trust}.
The rest of this section focuses on conditions~\cref{cond:strong-vs}
and~\cref{cond:small-model}.

\paragraph{Local behavior of \cg{}.}

To analyze \tcg{} we rely on the developments from
Section~\ref{sec:indefinite-cg}.
Suppose that $\mfc$ is $\plconstant$-\eqref{eq:pl} around a local minimum
$\optpoint$.
Then, since $\mfc$ is $\smooth{2}$, it is also $\plconstant$-\eqref{eq:morse-bott} at $\optpoint$, as explained
in Section~\ref{subsec:intuition}.
(Note that when $\mfc$ is only $\smooth{1}$ its solution set may not be
differentiable, even under \pl{}.)
In particular, the set $\optimalset$ of local minima~\eqref{eq:def-s} is a smooth submanifold of $\manifold$
around $\optpoint$ and $\dim \optimalset = \dim \ker \hess \mfc(\optpoint)$.
Let $\sdim$ be the codimension of $\optimalset$ around $\optpoint$.
The $\sdim$ nontrivial eigenvalues of $\hess \mfc(\optpoint)$ are at least
$\plconstant > 0$.
The continuity of the eigenvalues of $\hess \mfc$ implies that there exists a
neighborhood $\mathcal{U}$ of $\optpoint$ such that for all $x \in \mathcal{U}$
the orthogonal projector $P(x)$ onto the top $\sdim$ eigenspace of $\hess
\mfc(x)$ is well defined.
We first recall that $\grad \mfc(x)$ aligns primarily with that
eigenspace when $x$ is close to $\optpoint$.

\begin{lemma}\label{lemma:grad-image-hess}
  Suppose~\eqref{eq:pl} and~\aref{assu:hess-lip} hold around $\optpoint \in
  \optimalset$.
  Then, with the same notation as above, %
  \begin{align*}
    \grad \mfc(x) & = P(x) \grad \mfc(x) + O\big(\|\grad \mfc(x)\|^2\big)
  \end{align*}
  as $x \to \optpoint$ and $x \in \mathcal{U}$. %
\end{lemma}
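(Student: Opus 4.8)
The plan is to exploit the Morse–Bott structure: near $\optpoint$, the cost $\mfc$ looks, to second order, like a positive-definite quadratic on the normal space $\normal_{\optpoint}\optimalset$ and flat along the tangent space $\tangent_{\optpoint}\optimalset$. The gradient is therefore forced to lie (up to a quadratic remainder) in the "normal-like" directions, which are exactly the directions spanned by the top $\sdim$ eigenvectors of $\hess\mfc$. I would make this quantitative by comparing $\grad\mfc(x)$ at a point $x$ near $\optpoint$ with its projection onto the nearest minimizer.

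\medskip

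First I would pick, for each $x\in\mathcal{U}$ close to $\optpoint$, a nearest point $\bar x(x)\in\optimalset$ (well defined and smooth for $x$ in a small enough neighborhood, since $\optimalset$ is a $\smooth{1}$ embedded submanifold, by the tubular neighborhood theorem); note $\grad\mfc(\bar x(x))=\zeros$ and $\dist(x,\bar x(x))=O(\|\grad\mfc(x)\|)$ — the latter is precisely the error-bound form of \eqref{eq:pl}, which holds because $\plconstant$-\pl{} implies a quadratic growth / error bound around $\optpoint$ (see \cite{rebjock2023fast}). Next, using \aref{assu:hess-lip} (local Lipschitzness of $\hess\mfc$) and a first-order Taylor expansion of $\grad\mfc$ along a short geodesic (or retraction curve) from $\bar x(x)$ to $x$, I would write
\begin{align*}
  \grad\mfc(x) = \hess\mfc(\bar x)[\,v\,] + O(\|v\|^2),
\end{align*}
where $v\in\tangent_{\bar x}\manifold$ is the corresponding small tangent vector with $\|v\|=O(\|\grad\mfc(x)\|)$; here I am implicitly using parallel transport to identify tangent spaces and absorbing its (smooth, hence Lipschitz) distortion into the $O(\cdot)$ terms.

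\medskip

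Now decompose $v = v^{\mathrm{T}} + v^{\mathrm{N}}$ into its components tangent and normal to $\optimalset$ at $\bar x$. Since \eqref{eq:morse-bott} gives $\ker\hess\mfc(\bar x) = \tangent_{\bar x}\optimalset$, the tangent part is annihilated: $\hess\mfc(\bar x)[v] = \hess\mfc(\bar x)[v^{\mathrm{N}}]$, which lies in the range of $\hess\mfc(\bar x)$, i.e.\ in its top-$\sdim$ eigenspace at $\bar x$ (the nonzero eigenvalues are $\geq\plconstant$). Hence $\grad\mfc(x)$ lies within $O(\|\grad\mfc(x)\|^2)$ of that eigenspace \emph{at} $\bar x$. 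Finally I would transfer this from the eigenspace at $\bar x$ to the eigenspace at $x$: because $\hess\mfc$ is continuous (indeed Lipschitz) and the spectral gap at $\optpoint$ is at least $\plconstant$, the spectral projectors satisfy $\|P(x)-P(\bar x)\| = O(\dist(x,\bar x)) = O(\|\grad\mfc(x)\|)$ by the Davis–Kahan / Riesz-projector perturbation bound. Combining, $(I-P(x))\grad\mfc(x) = (I-P(x))\grad\mfc(x) - (I-P(\bar x))\grad\mfc(x) + (I-P(\bar x))\grad\mfc(x)$, and each term is $O(\|\grad\mfc(x)\|^2)$ — the first from the projector perturbation times $\|\grad\mfc(x)\|$, the last from the paragraph above — which gives the claim.

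\medskip

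\textbf{The main obstacle} I anticipate is bookkeeping the manifold technicalities cleanly: choosing the nearest-point map and justifying its smoothness, handling parallel transport between $\tangent_{\bar x}\manifold$ and $\tangent_x\manifold$ so that "$\ker\hess\mfc(\bar x)=\tangent_{\bar x}\optimalset$" can be used at the right base point, and making sure the $O(\cdot)$ constants are uniform over $\mathcal{U}$. The spectral-projector perturbation step is standard but needs the uniform gap, which is exactly what continuity of eigenvalues plus the $\plconstant$-\mb{} lower bound supplies. The algebra inside the tangent space (killing $v^{\mathrm{T}}$, bounding $\hess\mfc(\bar x)[v^{\mathrm{N}}]$ inside the top eigenspace) is routine once the setup is in place.
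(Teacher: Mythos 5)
The paper does not actually give a proof of this lemma; it simply cites~\cite[Lem.~2.14]{rebjock2023fast}, so there is no in-paper argument to compare against. Your sketch is a sound and essentially complete route to the statement: Taylor-expand $\grad\mfc$ around a nearest point $\bar x\in\optimalset$ (using \aref{assu:hess-lip} to get the $O(\|v\|^2)$ remainder and the error-bound consequence of~\eqref{eq:pl} to get $\|v\|=O(\|\grad\mfc(x)\|)$), observe that $\hess\mfc(\bar x)[v]$ lands in the top-$\sdim$ eigenspace at $\bar x$ because the kernel equals $\tangent_{\bar x}\optimalset$ by~\eqref{eq:morse-bott} (valid at all $\bar x\in\optimalset$ near $\optpoint$, not just $\optpoint$, since local minima nearby also satisfy \pl{}), and then pass from the projector at $\bar x$ to the projector at $x$ via a Davis--Kahan bound using the Lipschitz Hessian and the spectral gap $\geq\plconstant/2$. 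The combination $(I-P(x))\grad\mfc(x) = \big[P(\bar x)-P(x)\big]\grad\mfc(x) + (I-P(\bar x))\grad\mfc(x)$ indeed gives $O(\|\grad\mfc(x)\|^2)$ from each term.

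Two small points worth tightening. First, you assert the nearest-point map is \emph{smooth}; this usually requires $\optimalset$ to be $\smooth{2}$, whereas \pl{} for a $\smooth{2}$ cost only gives $\smooth{1}$ regularity of $\optimalset$. Fortunately you never use smoothness --- you only need a nearest point to exist with $\dist(x,\bar x)=\dist(x,\optimalset)\leq\plconstant^{-1}\|\grad\mfc(x)\|$, which the error bound already supplies directly, so this is a harmless overclaim rather than a gap. Second, $P(x)$ and $P(\bar x)$ live on different tangent spaces; the Davis--Kahan comparison must be between $\hess\mfc(x)$ and the parallel-transported operator $\ptransport{\bar x}{x}\hess\mfc(\bar x)\ptransport{x}{\bar x}$, both acting on $\tangent_x\manifold$, with \aref{assu:hess-lip} providing $\|\hess\mfc(x)-\ptransport{\bar x}{x}\hess\mfc(\bar x)\ptransport{x}{\bar x}\|=O(\dist(x,\bar x))$. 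You flag this as bookkeeping, and indeed it is, but it is exactly the piece that makes the ``each term is $O(\cdot)$'' line rigorous. With those fixes, the argument is complete.
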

\begin{proof}
  See~\cite[Lem.~2.14]{rebjock2023fast}.
\end{proof}

The algorithm \tcg{} is \cg{} with three stopping criteria.
The first two (lines~\ref{line:truncation-1}--\ref{line:truncation-1-end})
trigger when the iteration is not well defined (that is, a non-positive
eigenvalue is detected) or when the iterate exits the trust region.
In these cases the output of \tcg{} lies at the boundary of the trust region.
The third stopping criterion
(lines~\ref{line:truncation-2}--\ref{line:truncation-2-end}) triggers if the
residual norm is small.
In this case the output of \tcg{} is the iterate of \cg{}.

The conditions~\cref{cond:strong-vs} and~\cref{cond:small-model} bound the norms
of the iterate and its residual for the output of \tcg{}.
To secure them we need the third stopping criterion to trigger (before the two
others do), as otherwise we cannot control the norm of the output (which would
lie at the boundary of the trust region).
The purpose of the next lemma is to argue that this indeed happens around local
minima where \pl{} holds.
To do this, we identify a particular iteration of \cg{} with appropriate bounds
on both the iterate and the residual.
This is because \cg{} implicitly filters out the small eigenvalues for a certain number of
iterations.
This result is central to establish the upcoming convergence guarantees of TR
with \tcg{}.
From this, we argue later in this section that it is indeed the third stopping
criterion that triggers.

\begin{lemma}\label{lemma:cg-iterate-existence}
  Assume that $\mfc$ is $\plconstant$-\eqref{eq:pl} and
  $\hess \mfc$ is locally Lipschitz (\aref{assu:hess-lip})
  around $\optpoint \in \optimalset$.
  Let $\sdim$ be the codimension of $\optimalset$ around $\optpoint$.
  Given $\muflat < \plconstant$, $\power \in \interval[open right]{0}{1}$ and
  $\hessapproxconst \geq 0$, there exists a neighborhood $\mathcal{U}$ of
  $\optpoint$ such that for each $x \in \mathcal{U}$ and symmetric map $H \colon
  \tangent_x\manifold \to \tangent_x\manifold$ with $\|H - \hess \mfc(x)\| \leq
  \hessapproxconst \|\grad \mfc(x)\|$ the following holds.
  There exists an integer $n \in \{0, \dots, \sdim\}$ such that the $n$th
  iteration of \cg{} on $(H, -\grad \mfc(x))$ is well defined and, moreover, the
  \cg{} iterate $\cgvec_n$ and residual $\cgres_n$ satisfy
  \begin{align}\label{eq:cg-iterate-guarantees}
    \|\cgvec_n\| \leq \frac{1}{\muflat}\|\grad \mfc(x)\| && \text{and} && \|\cgres_n\| \leq \|\grad \mfc(x)\|^{1 + \power}.
  \end{align}
\end{lemma}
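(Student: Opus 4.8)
The plan is to recognize that, after diagonalizing $H$ (legitimate since \cg{} is orthogonally equivariant), \cg{} on $(H, -\grad\mfc(x))$ is an instance of the two-problem setup of Section~\ref{sec:indefinite-cg}: take $(\tmat, \tweight) = (H, -\grad\mfc(x))$ with eigenvalues ordered $\lambda_1 \geq \cdots \geq \lambda_{\mdim}$ (here $\mdim = \dim\manifold$), and let $(\mat, \weight)$ be the restriction to the top $\sdim$ eigenvalues of $H$ together with the matching entries of $-\grad\mfc(x)$. The goal is to show we are in the regime~\eqref{eq:effective-regime} and then read off the claimed bounds from Lemma~\ref{lemma:c-bounds}. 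The boundary case $\power = 0$ is trivial: $\cgvec_0 = \zeros$ and $\|\cgres_0\| = \|\grad\mfc(x)\| \leq \|\grad\mfc(x)\|^{1+0}$, so $n = 0$ works; assume $\power \in \interval[open]{0}{1}$ from now on, and write $g = \|\grad\mfc(x)\|$, which tends to $0$ as $x \to \optpoint$.

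First I would verify the structural hypotheses of Section~\ref{subsec:effective-regime} for $x$ in a small enough neighborhood $\mathcal{U}$. Since $\mfc$ is $\plconstant$-\eqref{eq:morse-bott} at $\optpoint$, the matrix $\hess\mfc(\optpoint)$ has exactly $\sdim$ nonzero eigenvalues, all $\geq \plconstant$, with kernel $\tangent_{\optpoint}\optimalset$. By Weyl's inequality, \aref{assu:hess-lip}, and $\|H - \hess\mfc(x)\| \leq \hessapproxconst g$, the top $\sdim$ eigenvalues of $H$ satisfy $\lambda_\sdim \geq \plconstant - O(\dist(x, \optpoint))$ (so $\lambda_\sdim > \muflat > 0$ eventually) while $\varepsilon \eqbydef \max_{j > \sdim}|\lambda_j| = O(\dist(x, \optpoint)) \to 0$; hence also $\lambda_\sdim > \lambda_{\sdim+1}$ and $\lambda_1 = O(1)$, giving $\mat \succ 0$ with a spectral gap, as required throughout Section~\ref{sec:indefinite-cg}. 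For the weights, Lemma~\ref{lemma:grad-image-hess} gives $(I - P(x))\grad\mfc(x) = O(g^2)$; since $\|H - \hess\mfc(x)\| = O(g)$ with the gap separating the primary block being $\Theta(1)$, the top $\sdim$ eigenspaces of $H$ and $\hess\mfc(x)$ differ by $O(g)$, so the tail of $\tweight$ in the eigenbasis of $H$ has norm $O(g^2)$, i.e.\ $\|B\|_\frob^2 = \sum_{j = \sdim+1}^{\mdim}\tweight_j^2 = O(g^4)$, while $\|\tweight\| = g$ and $\|\weight\| = g\,(1 - O(g^2))$. Finally $\varepsilon/\lambda_\sdim \to 0$, so Lemma~\ref{lemma:bounds-kappa-tau} makes $\xidiv$ and $\qdiv$ of~\eqref{eq:def-kappa-tau} $o(1)$ at every iteration $n \leq \grade(\mat, \weight) \leq \sdim$.

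Then I would apply Lemma~\ref{lemma:c-bounds} with $\cparam = \tfrac{1}{4\lambda_1}g^{2(1+\power)}$. Since $\power > 0$, $\cparam = o(g^2) = o\big(\lambda_\sdim^{-1}\|\weight\|^2\big)$, so $\cparam$ lies in the admissible range for $x$ close to $\optpoint$; moreover $(\lambda_\sdim\cparam)^{-1}\|B\|_\frob^2 = O(g^{2(1-\power)}) \to 0$, so the quantities $\eta$ and $\omega$ of Lemma~\ref{lemma:c-existence} are $o(1)$, in particular $\omega < 1$ eventually. Lemma~\ref{lemma:c-bounds} then produces $n \in \{1, \dots, \grade(\mat,\weight)\} \subseteq \{1, \dots, \sdim\}$ at which \cg{} on $(H, -\grad\mfc(x))$ is well defined, with $\|\cgvec_n\| \leq \frac{g}{(1-\omega)\lambda_\sdim}$ and $\|\cgres_n\|^2 \leq (1-\omega)^{-2}\big(\lambda_1\cparam + (1+\eta)(1+\xidiv)\omega\|B\|_\frob^2 + (1+\qdiv)^2\|B\|_\frob^2\big)$. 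Since $(1-\omega)\lambda_\sdim \to \plconstant > \muflat$, the first bound gives $\|\cgvec_n\| \leq g/\muflat$ for $x$ close to $\optpoint$. For the second, $\lambda_1\cparam = \tfrac14 g^{2(1+\power)}$ by construction, whereas $\omega\|B\|_\frob^2 = O(g^{6-2\power})$ and $\|B\|_\frob^2 = O(g^4)$ are both $o(g^{2(1+\power)})$ because $\power < 1$; as $(1-\omega)^{-2} \to 1$, this gives $\|\cgres_n\|^2 \leq g^{2(1+\power)}$ for $x$ close to $\optpoint$. Undoing the orthogonal change of basis (which preserves norms) yields~\eqref{eq:cg-iterate-guarantees}.

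The crux — and the reason Section~\ref{sec:indefinite-cg} is needed at all — is the simultaneous control of the iterate and the residual at a single $n$: bounding $\|\cgvec_n\|$ requires the roots of $\trhopol_n$ bounded away from $0$ (equivalently $\|\coef\|_1 < 1$, which forces $n$ small), while driving $\|\cgres_n\|$ down to $g^{1+\power}$ pushes $n$ up; Lemma~\ref{lemma:c-existence}, which selects $n$ minimal with $\|\qpol_n\|^2 \leq \lambda_1\cparam$, is exactly what reconciles the two. Beyond invoking that machinery, the only new ingredients are bookkeeping: extracting the \mb{} eigenvalue structure of $H$ from that of $\hess\mfc(\optpoint)$, bounding the tail of $\tweight$ by $O(g^2)$ via Lemma~\ref{lemma:grad-image-hess}, and checking that the scaling $\cparam = \Theta(g^{2(1+\power)})$ makes every error term in Lemma~\ref{lemma:c-bounds} subordinate to $g^{2(1+\power)}$ — which works precisely because $\power < 1$.
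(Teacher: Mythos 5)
Your proposal is correct and follows essentially the same strategy as the paper's proof: cast $(H, -\grad\mfc(x))$ into the two-problem framework of Section~\ref{sec:indefinite-cg} using the Morse--Bott eigenvalue structure and Lemma~\ref{lemma:grad-image-hess} to establish the regime~\eqref{eq:effective-regime}, then invoke Lemma~\ref{lemma:c-bounds} with a scale parameter $\cparam$ tuned so that all error terms are subordinate to $g^{2(1+\power)}$. The only material difference is your choice $\cparam = \Theta(g^{2(1+\power)})$ versus the paper's $\cparam \approx \lambda_\sdim^{-1} g^{3+\power}$; both exponents satisfy the necessary inequalities precisely because $\power < 1$, and both yield the stated conclusion.
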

\begin{proof}
  Apply~\cite[Cor.~2.17]{rebjock2023fast} to confirm that $\mfc$ satisfies
  $\plconstant$-\eqref{eq:morse-bott} at $\optpoint$.
  Let $\mdim$ be the dimension of $\manifold$ (the domain of $\mfc$) and let
  $\sdim \leq \mdim$ be the codimension of the solution set $\optimalset$
  (around $\optpoint$).
  For each $x \in \manifold$ we need to control the eigenvalues of all symmetric
  linear maps $H \colon \tangent_x \manifold \to \tangent_x \manifold$ which
  satisfy $\|H - \hess \mfc(x)\| \leq \hessapproxconst \|\grad \mfc(x)\|$.
  To this end, we define three functions to control the eigenvalues of the map
  $H$ depending on $x$.
  First, introduce
  \begin{align*}
    \lambda_-(x) = \lambda_\sdim\big(\hess \mfc(x)\big) - \hessapproxconst \|\grad \mfc(x)\|
    && \text{and} &&
    \lambda_+(x) = \lambda_1\big(\hess \mfc(x)\big) + \hessapproxconst \|\grad \mfc(x)\|
  \end{align*}
  to bracket the top $\sdim$ eigenvalues of $\linearmap$.
  Then, let
  \begin{align*}
    \varepsilon(x) = \hessapproxconst \|\grad \mfc(x)\| + \max_{j = \sdim + 1, \dots, \mdim} \big|\lambda_j\big(\hess \mfc(x)\big)\big|
  \end{align*}
  to bound the last $\mdim - \sdim$ eigenvalues.
  Standard theory for eigenvalue perturbation confirms that
  \begin{align*}
    \lambda_-(x) \leq \lambda_\sdim(H) \leq \lambda_1(H) \leq \lambda_+(x) && \text{and} && \max_{j = \sdim + 1, \dots, \mdim} |\lambda_j(H)| \leq \varepsilon(x).
  \end{align*}
  The functions $\lambda_-$, $\lambda_+$ and $\varepsilon$ are continuous.
  They also satisfy $\lambda_-(\optpoint) \geq \plconstant$,
  $\lambda_+(\optpoint) = \|\hess \mfc(\optpoint)\|$ and $\varepsilon(\optpoint)
  = 0$.
  There exists a neighborhood $\mathcal{U}$ of $\optpoint$ such that
  $\varepsilon(x) < \lambda_-(x)$ for all $x \in \mathcal{U}$.
  In particular, for all $x \in \mathcal{U}$ and map $H$ as prescribed, the
  orthogonal projector $P_H$ onto the top $\sdim$ eigenspace of $H$ is well
  defined.
  Let also $P(x)$ denote the orthogonal projector onto the top $\sdim$
  eigenspace of $\hess \mfc(x)$.
  Using the equality\footnote{This is because $\|P(x)(I - P_H)v\| \leq \|(P_H -
    P(x))v\|$ for all $v$, with equality when $v$ is an eigenvector of $P_H -
    P(x)$.
  }
  $\|P_H - P(x)\| = \|P(x) (I - P_H)\|$, Davis--Kahan's
  theorem~\citep[Thm.~VII.3.1]{bhatia1997matrixanalysis} implies that
  \begin{align*}
    \|P_H - P(x)\| \leq \frac{\|H - \hess \mfc(x)\|}{\lambda_\sdim\big(\hess \mfc(x)\big) - \varepsilon(x)} \leq \frac{\hessapproxconst \|\grad \mfc(x)\|}{\lambda_\sdim\big(\hess \mfc(x)\big) - \varepsilon(x)}.
  \end{align*}
  Combining this inequality with Lemma~\ref{lemma:grad-image-hess} yields that
  there exists a constant $C \geq 0$ such that
  \begin{align}\label{eq:grad-aligned-h}
    \|\grad \mfc(x) - P_H \grad \mfc(x)\|^2 \leq C \|\grad \mfc(x)\|^4
  \end{align}
  for all $x$ sufficiently close to $\optpoint$ and map $H$ as prescribed.
  If need be,
  restrict $\mathcal{U}$ to a smaller neighborhood of $\optpoint$
  so that~\eqref{eq:grad-aligned-h}, $\|\grad \mfc(x)\|^2 \leq 1$ and $C \|\grad
  \mfc(x)\|^2 \leq 1$ hold for all $x$ in $\mathcal{U}$.
  We define the functions
  \begin{align}\label{eq:baruxsquxsq}
    u(x) = \|\grad \mfc(x)\|^2\big(1 - C \|\grad \mfc(x)\|^2\big) && \text{and} && \bar u(x) = C \|\grad \mfc(x)\|^4
  \end{align}
  on $\mathcal{U}$.
  From~\eqref{eq:grad-aligned-h} we deduce that for all $x \in \mathcal{U}$ and
  map $H$ as prescribed we have
  \begin{align}\label{eq:ph-u-ubar}
    \|(I - P_H) \grad \mfc(x)\|^2 \leq \bar u(x)
    && \text{and} &&
    u(x) \leq \|P_H \grad \mfc(x)\|^2 \leq \|\grad \mfc(x)\|^2,
  \end{align}
  where we used the identity $\|P_H \grad \mfc(x)\|^2 = \|\grad \mfc(x)\|^2 -
  \|(I - P_H) \grad \mfc(x)\|^2$ for the lower-bound on the right.
  At a point $x$ with appropriate map $H$ we aim to invoke %
  Lemma~\ref{lemma:c-bounds} with parameter $c(x, H) =
  \lambda_\sdim(H)^{-1}u(x)^{\frac{3 + \power}{2}}$.
  (Notice that $u(x) \leq 1$, hence $c(x, H) \leq \lambda_\sdim(H)^{-1}u(x)$;
  together with $u(x) \leq \|P_H \grad \mfc(x)\|^2$, this will ensure that $c(x,
  H)$ is an appropriate choice of $c$ for Lemma~\ref{lemma:c-bounds}.)
  To this end, consider the bounds in
  Lemma~\ref{lemma:bounds-kappa-tau} and define
  \begin{align*}
    \xidiv(x) &= \big(1 + \varepsilon(x)/\lambda_-(x)\big)^{\sdim - 1} - 1, & \eta(x) & = \qdiv(x) + \big(1 + \qdiv(x)\big) \big(1 + \xidiv(x)\big)^2 \bar u(x)/u(x)^{\frac{3 + \power}{2}},\\
    \qdiv(x) &= \big(1 + \varepsilon(x)/\lambda_-(x)\big)^\sdim - 1, & \omega(x) & = \big(1 + \eta(x)\big) \big(1 + \xidiv(x)\big) \bar u(x)/u(x)^{\frac{3 + \power}{2}}.
  \end{align*}
  These functions are continuous around $\optpoint$ and their value at
  $\optpoint$ is zero since, by~\eqref{eq:baruxsquxsq},
  \begin{align*}
    \bar u(x)/u(x)^{\frac{3 + \power}{2}} = C \|\grad \mfc(x)\|^{1 - \power}\big(1 - C \|\grad \mfc(x)\|^2\big)^{-(3 + \power)/2} %
  \end{align*}
  for all $x \in \mathcal{U}$.
  For the hypotheses of Lemma~\ref{lemma:c-bounds} to hold, we further restrict
  the neighborhood $\mathcal{U}$ so that $\omega(x) < 1$ for all $x \in
  \mathcal{U}$.
  Now pick a specific $x \in \mathcal{U}$ with map $H$ as prescribed.
  If $\grad \mfc(x) = \zeros$ then the iteration $n = 0$ satisfies the
  requirements~\eqref{eq:cg-iterate-guarantees}.
  Suppose now that $\grad \mfc(x) \neq \zeros$.
  Apply Lemma~\ref{lemma:c-bounds} to the pair $(H, -\grad \mfc(x))$
  with $c = c(x, H)$.
  Combine it with the bounds given in Lemma~\ref{lemma:bounds-kappa-tau}.
  It yields the existence of an integer $n$ such that \emph{(i)} the $n$th
  iteration of \cg{} is well defined, \emph{(ii)} the iterate $\cgvec_n$
  satisfies $\|\cgvec_n\| \leq \lambda_-(x)^{-1}(1 - \omega(x))^{-1}\|\grad
  \mfc(x)\|$, and \emph{(iii)} the residual $\cgres_n$ satisfies
  \begin{align*}
    \|\cgres_n\|^2 \leq
    \big(1 - \omega(x)\big)^{-2}
    \bigg(\frac{\lambda_+(x)}{\lambda_-(x)}u(x)^{\frac{3 + \power}{2}}
    + \big(1 + \eta(x)\big)^2\big(1 + \xidiv(x)\big)^2 \bar u(x)^2 / u(x)^{\frac{3 + \power}{2}}
    + \big(1 + \qdiv(x)\big)^2 \bar u(x) \bigg).
  \end{align*}
  First notice that the bound $\|\cgvec_n\| \leq \|\grad \mfc(x)\|/\muflat$
  holds when $x$ is sufficiently close to $\optpoint$ because
  $\lambda_-(\optpoint) \geq \plconstant > \muflat$ and $\omega(\optpoint) = 0$.
  Now consider the residual norm.
  When $x$ is sufficiently close to $\optpoint$ the following inequalities hold:
  \begin{align*}
    u(x)^{\frac{3 + \power}{2}} \leq \|\grad \mfc(x)\|^{3 + \power}, &&&& \bar u(x)^2 / u(x)^{\frac{3 + \power}{2}} \leq 2 C^2 \|\grad \mfc(x)\|^{5 - \power} && \text{and} && \bar u(x) \leq C \|\grad \mfc(x)\|^4.
  \end{align*}
  We deduce that $\|\cgres_n\| \leq \|\grad \mfc(x)\|^{1 + \power}$ if $x$ is
  sufficiently close to $\optpoint$ because $3 + \power > 2 + 2 \power$.
  (The strict inequality $\power < 1$ plays a role here, making it possible to
  absorb multiplicative factors.)
\end{proof}

Note that the conclusion of Lemma~\ref{lemma:cg-iterate-existence} is immediate
if one makes the stronger assumption $\hess \mfc(\optpoint) \succ \zeros$.
In that case we can choose the iteration $n$ to be the grade of $(H, -\grad
\mfc(x))$, as per Definition~\ref{def:grade}, to obtain the requirements on
$\cgvec_n$ and $\cgres_n$.
However, this is in general not possible when only \pl{} holds.
This is because after a certain number of iterations, \cg{} starts to detect the
small eigenvalues of $\linearmap$.
When this happens, the iterates of the algorithm typically explode and the
bounds in~\eqref{eq:cg-iterate-guarantees} no longer hold.
We illustrate this phenomenon in Figures~\ref{fig:vn-rn-norms}
and~\ref{fig:pols}.
Remarkably, \tcg{} stops automatically before that happens, as we argue subsequently.

\begin{remark}
  Lemma~\ref{lemma:cg-iterate-existence} does not specify the size of the
  neighborhood $\mathcal{U}$ in which the conclusion holds because we have only
  little control over it.
  Notice that the magnitudes of $\xidiv$ and $\qdiv$, as defined
  in~\eqref{eq:def-kappa-tau}, might depend on the dimension of the problem, as
  reflected by Lemma~\ref{lemma:bounds-kappa-tau}.
  This induces for the size of $\mathcal{U}$ a dependency on the codimension of
  $\optimalset$.
  However, the poor bounds for $\xidiv$ and $\qdiv$ in
  Lemma~\ref{lemma:bounds-kappa-tau} are due to the \emph{negative} eigenvalues.
  As a result, it should be possible to derive much sharper bounds in the
  regions near $\optpoint$ where all the eigenvalues are positive.
\end{remark}

\paragraph{Application to TR with \tcg{}.}
A first consequence of Lemma~\ref{lemma:cg-iterate-existence} is that TR with
\tcg{} satisfies~\cref{cond:strong-vs} around points where \pl{} holds.
To show this, we use the following classical result.

\begin{lemma}\label{lemma:iterates-cg-grow}
  The iterates of \cg{} and \tcg{} grow in norm: $0 = \|\cgvec_0\| \leq \|\cgvec_1\| \leq \|\cgvec_2\| \leq \cdots$.
\end{lemma}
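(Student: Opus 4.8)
This is a classical fact (see, e.g., \citep{steihaug1983conjugate}), so it could simply be cited; here is the line of argument I would use to prove it directly. The plan is to reduce the monotonicity to the sign of a single inner product and then exploit two elementary features of the \cg{} recurrence, both valid on the range of well-defined iterations: the residuals $\cgres_0,\cgres_1,\dots$ are pairwise orthogonal (because $\cgres_n = \qpol_n(\mat)\weight$ and the $\qpol_n$ are rescalings of the orthogonal \lanczos{} polynomials from Lemma~\ref{lemma:rho-basic-props}, so $\inner{\cgres_i}{\cgres_j}$ equals the polynomial semi-inner product $\inner{\qpol_i}{\qpol_j}$), and the coefficients $\beta_n = \|\cgres_n\|^2/\|\cgres_{n-1}\|^2$ are nonnegative.

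First I would treat \cg{}. For a well-defined iteration $n$, $\cgvec_n = \cgvec_{n-1} + \alpha_n\cgdir_{n-1}$ with $\alpha_n \geq 0$, so
\begin{align*}
  \|\cgvec_n\|^2 = \|\cgvec_{n-1}\|^2 + 2\alpha_n\inner{\cgvec_{n-1}}{\cgdir_{n-1}} + \alpha_n^2\|\cgdir_{n-1}\|^2,
\end{align*}
and it suffices to show $\inner{\cgvec_{n-1}}{\cgdir_{n-1}} \geq 0$. Telescoping the update from $\cgvec_0 = \zeros$ gives $\cgvec_{n-1} = \sum_{j=1}^{n-1}\alpha_j\cgdir_{j-1}$, so (since each $\alpha_j \geq 0$) it is enough to prove $\inner{\cgdir_i}{\cgdir_j} \geq 0$ for all indices $i,j$. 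Unrolling $\cgdir_j = \cgres_j + \beta_j\cgdir_{j-1}$ down to $\cgdir_0 = \cgres_0$ expresses $\cgdir_j = \sum_{k=0}^{j} c_{j,k}\cgres_k$ with $c_{j,k} = \beta_{k+1}\cdots\beta_j \geq 0$; then for $i \leq j$ the residual orthogonality yields $\inner{\cgdir_i}{\cgdir_j} = \sum_{k=0}^{i} c_{i,k}c_{j,k}\|\cgres_k\|^2 \geq 0$. Chaining these three observations proves $\|\cgvec_{n-1}\| \leq \|\cgvec_n\|$ for every well-defined $n$, hence the claim for \cg{}.

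For \tcg{}, the iterates coincide with those of \cg{} up to the iteration at which a truncation fires, so monotonicity along that initial segment is already covered, and if truncation~2 fires the output is exactly a \cg{} iterate. The only extra case is truncation~1, where the output is $\cgvec_n = \cgvec_{n-1} + t\cgdir_{n-1}$ with $t \geq 0$: since the inequality $\inner{\cgvec_{n-1}}{\cgdir_{n-1}} \geq 0$ established above involves only quantities produced in the well-defined iterations $0,\dots,n-1$, the same expansion of $\|\cgvec_n\|^2$ with $t$ in place of $\alpha_n$ gives $\|\cgvec_n\| \geq \|\cgvec_{n-1}\|$.

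There is no genuine obstacle here; the only points requiring care are that residual orthogonality and the expansion of $\cgvec_{n-1}$ in $\cgdir_0,\dots,\cgdir_{n-2}$ invoke solely the well-defined iterations (so the argument survives the possibly indefinite $\mat$ relevant to the \tcg{} analysis), and that the boundary step of truncation~1 indeed uses a nonnegative step length $t$, which holds because $\|\cgvec_{n-1}\| < \Delta$ whenever that iteration is reached.
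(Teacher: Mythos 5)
Your proof is correct. The paper itself does not write out an argument for this lemma; it simply cites Conn--Gould--Toint (Thm.~7.5.1) and Nocedal--Wright (Thm.~7.3), which contain precisely the classical argument you reproduce: telescope $\cgvec_{n-1} = \sum_{j\le n-1}\alpha_j \cgdir_{j-1}$ with $\alpha_j \ge 0$, unroll each direction $\cgdir_j$ as a nonnegative combination of the pairwise-orthogonal residuals $\cgres_0,\dots,\cgres_j$, conclude $\inner{\cgdir_i}{\cgdir_j}\ge 0$ and hence $\inner{\cgvec_{n-1}}{\cgdir_{n-1}}\ge 0$, and read off $\|\cgvec_n\|\ge\|\cgvec_{n-1}\|$ from the expansion of $\|\cgvec_n\|^2$. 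So this is the same approach as the paper's (outsourced) proof, and your two extra observations --- that the boundary step in truncation~1 uses the same sign argument with step length $t\ge 0$, and that every quantity invoked (residual orthogonality, direction unrolling) depends only on the already well-defined iterations $0,\dots,n-1$, so nothing breaks when $\mat$ is indefinite --- are exactly the points that make the cited classical result applicable in the setting of this paper.
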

\begin{proof}
  See~\cite[Thm.~7.5.1]{conn2000trust} or~\cite[Thm.~7.3]{nocedal2006numerical}.
\end{proof}

\begin{proposition}\label{prop:rtr-tcg-strong-vs}
  Consider \tcg{} with parameters $\tcgkappa > 0$ and $\power \in
  \interval[open]{0}{1}$.
  Assume that~\aref{assu:hess-lip}, \aref{assu:hess-approx} and
  $\plconstant$-\eqref{eq:pl} hold around $\optpoint \in \optimalset$.
  Given $\muflat < \plconstant$, there exists a neighborhood of $\optpoint$
  where the \tcg{} steps satisfy~\cref{cond:strong-vs} with constant
  $\strongvsconst = 1/\muflat$.
\end{proposition}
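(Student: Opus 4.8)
The plan is to establish $\|s_k\| \le \muflat^{-1}\|\grad\mfc(x_k)\|$, which is precisely~\cref{cond:strong-vs} with $\strongvsconst = 1/\muflat$, by combining the well-behaved \cg{} iterate furnished by Lemma~\ref{lemma:cg-iterate-existence} with the norm-monotonicity of \cg{} and \tcg{} (Lemma~\ref{lemma:iterates-cg-grow}), after checking that \tcg{} must halt at or before that iterate. Fix $\muflat < \plconstant$. Recall \tcg{} is fed $\mat = \linearmap_k$, $\weight = -\grad\mfc(x_k)$, $\Delta = \Delta_k$, and~\aref{assu:hess-approx} guarantees $\|\linearmap_k - \hess\mfc(x_k)\| \le \hessapproxconst\|\grad\mfc(x_k)\|$ near $\optpoint$. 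Applying Lemma~\ref{lemma:cg-iterate-existence} with this $\muflat$, the given $\power$, and $\hessapproxconst$ produces a neighborhood $\mathcal{U}$ of $\optpoint$ such that for each $x_k \in \mathcal{U}$ there is an integer $n^\star \in \{0, \dots, \sdim\}$ for which the $n^\star$th iteration of \cg{} on $(\linearmap_k, -\grad\mfc(x_k))$ is well defined and
\begin{align*}
  \|\cgvec_{n^\star}\| \le \muflat^{-1}\|\grad\mfc(x_k)\| && \text{and} && \|\cgres_{n^\star}\| \le \|\grad\mfc(x_k)\|^{1+\power}.
\end{align*}
I would further shrink $\mathcal{U}$ so that $\|\grad\mfc(x_k)\| < \min(1, \tcgkappa^{1/\power})$ on it. This makes the threshold in line~\ref{line:truncation-2} equal to $\|\cgres_0\|\min(\|\cgres_0\|^\power, \tcgkappa) = \|\grad\mfc(x_k)\|^{1+\power}$, and, since $\|\cgres_0\| = \|\grad\mfc(x_k)\|$, it forces $\grad\mfc(x_k) = \zeros$ whenever $n^\star = 0$ (for then $\|\grad\mfc(x_k)\| \le \|\grad\mfc(x_k)\|^{1+\power}$), in which case \tcg{} outputs $s_k = \zeros$ and there is nothing to prove. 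So assume from now on $\grad\mfc(x_k) \ne \zeros$ and $n^\star \ge 1$.

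The next step is to show \tcg{} stops at some iteration $m \le n^\star$. Since the $n^\star$th iteration of \cg{} is well defined, so are all iterations $1, \dots, n^\star$ (Definition~\ref{def:well-defined}, using that the Krylov subspaces are nested), hence the inner products in line~\ref{line:inner-product} stay strictly positive through iteration $n^\star$~\cite[Thm.~38.1]{trefethen1997numerical}. As long as no truncation has fired, the iterates, residuals and search directions of \tcg{} coincide with those of \cg{}, so the negative-curvature disjunct of line~\ref{line:truncation-1} cannot trigger for $n \le n^\star$. Consequently, if \tcg{} has not stopped before iteration $n^\star$, then at iteration $n^\star$ either $\|\cgvec_{n^\star - 1}^+\| \ge \Delta_k$, so line~\ref{line:truncation-1} fires; or $\|\cgvec_{n^\star - 1}^+\| < \Delta_k$, in which case $\cgvec_{n^\star} = \cgvec_{n^\star - 1}^+$ equals the \cg{} iterate, its residual is $\cgres_{n^\star}$, and line~\ref{line:truncation-2} fires because $\|\cgres_{n^\star}\| \le \|\grad\mfc(x_k)\|^{1+\power}$. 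Either way, $m \le n^\star$.

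Finally I would bound the output $s_k$ in the two possible stopping cases. If \tcg{} stops via line~\ref{line:truncation-2}, then $s_k$ is the \cg{} iterate $\cgvec_m$, and since $m \le n^\star$ while $\|\cgvec_0\| \le \|\cgvec_1\| \le \cdots$ (Lemma~\ref{lemma:iterates-cg-grow}), we get $\|s_k\| = \|\cgvec_m\| \le \|\cgvec_{n^\star}\| \le \muflat^{-1}\|\grad\mfc(x_k)\|$. If \tcg{} stops via line~\ref{line:truncation-1}, then, the negative-curvature disjunct being excluded for $m \le n^\star$, it fired because $\|\cgvec_{m-1}^+\| \ge \Delta_k$; but $\cgvec_{m-1}^+ = \cgvec_m$ is the \cg{} iterate (no truncation having fired earlier, and iteration $m$ being well defined), so $\Delta_k \le \|\cgvec_m\| \le \|\cgvec_{n^\star}\| \le \muflat^{-1}\|\grad\mfc(x_k)\|$, and the \tcg{} output has norm exactly $\Delta_k$, whence again $\|s_k\| \le \muflat^{-1}\|\grad\mfc(x_k)\|$. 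This gives~\cref{cond:strong-vs} with $\strongvsconst = 1/\muflat$ on $\mathcal{U}$. I expect the only delicate part to be the bookkeeping that \tcg{}'s stopping index cannot exceed $n^\star$, together with pinning down the form of the line~\ref{line:truncation-2} threshold once $\|\grad\mfc(x_k)\|$ is small; everything else is monotonicity and Lemma~\ref{lemma:cg-iterate-existence}.
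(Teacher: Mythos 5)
Your proof is correct and follows the same approach as the paper's: invoke Lemma~\ref{lemma:cg-iterate-existence} to obtain an iterate $n^\star$ with the desired bounds, shrink the neighborhood so the residual threshold on line~\ref{line:truncation-2} reduces to $\|\grad\mfc(x_k)\|^{1+\power}$, then use Lemma~\ref{lemma:iterates-cg-grow} to conclude. You simply spell out details the paper leaves implicit — in particular, that the negative-curvature disjunct cannot fire before $n^\star$ (because well-definedness is inherited by earlier iterations), and that in the trust-region-boundary case the output norm $\Delta_k$ is still dominated by $\|\cgvec_{n^\star}\|$.
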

\begin{proof}
  Let $\mathcal{U}$ be a neighborhood of $\optpoint$ as in
  Lemma~\ref{lemma:cg-iterate-existence} for the parameters $\muflat$, $\power$
  and $\hessapproxconst$ (\aref{assu:hess-approx}).
  Shrink $\mathcal{U}$ so that $\|\grad \mfc(x)\|^\power \leq \tcgkappa$ for all
  $x \in \mathcal{U}$.
  Given an iterate $x_k \in \mathcal{U}$ with $\linearmap_k$, let $n$ be as provided by
  Lemma~\ref{lemma:cg-iterate-existence} (so the
  bounds~\eqref{eq:cg-iterate-guarantees} hold).
  Algorithm~\ref{alg:tcg} on $(\linearmap_k, -\grad \mfc(x_k))$ terminates
  either with $\cgvec_n$ (because the residual norm stopping criterion on
  line~\ref{line:truncation-2} triggers), or earlier.
  In all cases, Lemma~\ref{lemma:iterates-cg-grow} implies the returned
  \tcg{} step has norm at most that of $\cgvec_n$, which itself is at most
  $\frac{1}{\muflat}\|\grad \mfc(x_k)\|$.
\end{proof}

We now leverage Lemma~\ref{lemma:cg-iterate-existence} to
establish~\cref{cond:small-model}.
The residual of \cg{} satisfies $\cgres_n = -\grad \rtrmodel(\cgvec_n)$, where
$\rtrmodel$ is the model~\eqref{eq:rtr-model}.
It follows that~\eqref{eq:cg-iterate-guarantees} also provides a bound on $\grad \rtrmodel(\cgvec_n)$.
We first state a helper lemma.

\begin{lemma}\label{lemma:ratios-converge}
  Suppose that~\aref{assu:hess-lip-like} and~\aref{assu:hess-approx} hold around
  $\optpoint \in \optimalset$.
  Also assume that the steps $s_k$ satisfy~\cref{cond:better-than-cauchy}
  and~\cref{cond:strong-vs} around $\optpoint$.
  For all $\varepsilon > 0$ there exists a neighborhood $\mathcal{U}$ of
  $\optpoint$ such that if an iterate $x_k$ is in $\mathcal{U}$ then $\rtrrho_k
  \geq 1 - \varepsilon$, where $\rtrrho_k$ is the ratio~\eqref{eq:rtr-rho}.
\end{lemma}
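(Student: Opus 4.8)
The plan is to write $1-\rtrrho_k$ as a single ratio — model-versus-function discrepancy over model decrease — and show this ratio is $O(\|\grad\mfc(x_k)\|)$. Since $\rtrmodel_k(\zeros)=\mfc(x_k)$,
\begin{align*}
  1-\rtrrho_k=\frac{\mfc(\retr_{x_k}(s_k))-\rtrmodel_k(s_k)}{\rtrmodel_k(\zeros)-\rtrmodel_k(s_k)}
\end{align*}
when $\grad\mfc(x_k)\neq\zeros$; if $\grad\mfc(x_k)=\zeros$ then~\cref{cond:strong-vs} forces $s_k=\zeros$, so $\rtrrho_k=1$ by the convention for a zero denominator and there is nothing to prove.

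For the numerator, I would add and subtract $\tfrac12\inner{s_k}{\hess\mfc(x_k)[s_k]}$, bounding the retraction-versus-true-Hessian remainder by $\tfrac{\hessliplikeconst}{6}\|s_k\|^3$ via~\eqref{eq:hess-lip-like} and the $\|\linearmap_k-\hess\mfc(x_k)\|$ term by $\tfrac{\hessapproxconst}{2}\|\grad\mfc(x_k)\|\,\|s_k\|^2$ via~\aref{assu:hess-approx}. Using~\cref{cond:strong-vs} to absorb one factor $\|s_k\|\le\strongvsconst\|\grad\mfc(x_k)\|$ and keeping two more as $\|s_k\|\le\min(\Delta_k,\strongvsconst\|\grad\mfc(x_k)\|)$ (recall $\|s_k\|\le\Delta_k$ as well), the numerator is at most $C_1\strongvsconst\|\grad\mfc(x_k)\|^2\min(\Delta_k,\strongvsconst\|\grad\mfc(x_k)\|)$ for a constant $C_1$ depending only on $\hessliplikeconst,\hessapproxconst,\strongvsconst$. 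For the denominator,~\cref{cond:better-than-cauchy} gives $\rtrsufficientdecrease\|\grad\mfc(x_k)\|\min(\Delta_k,\|\grad\mfc(x_k)\|^3/|\inner{\grad\mfc(x_k)}{\linearmap_k[\grad\mfc(x_k)]}|)$; since~\aref{assu:hess-approx} and continuity of $\hess\mfc$ yield a uniform bound $\|\linearmap_k\|\le L$ near $\optpoint$, the second entry of the $\min$ is at least $\|\grad\mfc(x_k)\|/L$ (read as $+\infty$ if $L=0$). Dividing, the $\|\grad\mfc(x_k)\|$ factors cancel and the two minima combine through the elementary bound $\min(a,\strongvsconst g)/\min(a,g/L)\le\max(1,\strongvsconst L)$, valid for all $a,g>0$, giving $1-\rtrrho_k\le C_2\|\grad\mfc(x_k)\|$ for a constant $C_2$.

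To finish, note $\mfc$ is $\smooth{2}$ and $\grad\mfc(\optpoint)=\zeros$ (as $\optpoint\in\optimalset$), so $\|\grad\mfc(x)\|\to0$ as $x\to\optpoint$. Given $\varepsilon>0$, shrink $\mathcal{U}$ so that~\aref{assu:hess-lip-like},~\aref{assu:hess-approx},~\cref{cond:better-than-cauchy} and~\cref{cond:strong-vs} all hold on it, so that $\|s_k\|\le\strongvsconst\|\grad\mfc(x)\|$ is small enough for the remainder bound~\eqref{eq:hess-lip-like} to apply, so that $\|\linearmap_k\|\le L$, and so that $C_2\|\grad\mfc(x)\|\le\varepsilon$; then $x_k\in\mathcal{U}$ gives $\rtrrho_k\ge1-\varepsilon$.

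The only delicate point is coupling the numerator's $\|s_k\|$-dependence with the $\min(\Delta_k,\cdot)$ of the Cauchy decrease: bounding $\|s_k\|$ solely by $\strongvsconst\|\grad\mfc(x_k)\|$ would fail in the small-radius regime, where the denominator scales with $\Delta_k$ rather than $\|\grad\mfc(x_k)\|^2$; keeping the bound $\|s_k\|\le\Delta_k$ in play makes the ratio collapse to $O(\|\grad\mfc(x_k)\|)$ with no case distinction.
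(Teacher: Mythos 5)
Your proof is correct and follows the standard line of argument for this kind of result. The paper itself only cites Proposition 4.10 of Rebjock and Boumal (2023) rather than spelling out the steps, but your decomposition — write $1-\rtrrho_k$ as (model error)/(model decrease), bound the model error above by $\tfrac{\hessliplikeconst}{6}\|s_k\|^3 + \tfrac{\hessapproxconst}{2}\|\grad\mfc(x_k)\|\,\|s_k\|^2$ via A\ref{assu:hess-lip-like} and A\ref{assu:hess-approx}, bound the model decrease below by the Cauchy decrease from C\ref{cond:better-than-cauchy} with $\|\linearmap_k\|\le L$, then use C\ref{cond:strong-vs} together with $\|s_k\|\le\Delta_k$ to collapse the two minima — is exactly the expected route and is correctly carried out. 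Two small points worth noting that you handled well: the one-sidedness of~\eqref{eq:hess-lip-like} is sufficient because a negative numerator makes $\rtrrho_k\ge 1$ trivially; and keeping $\|s_k\|\le\Delta_k$ alongside $\|s_k\|\le\strongvsconst\|\grad\mfc(x_k)\|$ is precisely what makes the ratio of minima bounded by $\max(1,\strongvsconst L)$ without a case split on whether $\Delta_k$ is small or large.
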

\begin{proof}
  See~\cite[Prop.~4.10]{rebjock2023fast}.
\end{proof}

\begin{proposition}\label{prop:rtr-tcg-small-model}
  Let $\sequence{x_k}$ be a sequence that TR generates using the \tcg{}
  subproblem solver (Algorithm~\ref{alg:tcg}) with parameters $\tcgkappa > 0$
  and $\power \in \interval[open]{0}{1}$.
  Suppose that $\sequence{x_k}$ converges to a point $\optpoint \in \optimalset$
  around which $\mfc$ is $\plconstant$-\eqref{eq:pl}.
  Also assume that~\aref{assu:hess-lip}, \aref{assu:hess-lip-like}
  and~\aref{assu:hess-approx} hold around $\optpoint$.
  Then the sequence $\sequence{(x_k, s_k)}$ satisfies~\cref{cond:small-model}
  with constants $\modelconst = 1$ and $\power$.
\end{proposition}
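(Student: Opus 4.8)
The plan is to show that, once $x_k$ is close enough to $\optpoint$, the residual test of \tcg{} (truncation~2, line~\ref{line:truncation-2}) fires \emph{before} the other two truncation rules can, and that it fires at an iteration whose residual is at most $\|\grad \mfc(x_k)\|^{1 + \power}$. Since the \cg{} residual at an iterate $\cgvec$ equals $-\grad \rtrmodel_k(\cgvec)$ (as recalled just before the proposition), this immediately gives $\|\grad \rtrmodel_k(s_k)\| \leq \|\grad \mfc(x_k)\|^{1+\power}$, that is, \cref{cond:small-model} with $\modelconst = 1$ and exponent $\power$.

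First I would collect the conditions already in hand. \tcg{} satisfies \cref{cond:better-than-cauchy} with constant $1/2$ (via the Cauchy step and Lemma~\ref{lemma:iterates-cg-grow}, as noted above), and, fixing some $\muflat < \plconstant$, Proposition~\ref{prop:rtr-tcg-strong-vs} gives a neighborhood of $\optpoint$ on which \tcg{} satisfies \cref{cond:strong-vs}. Since \aref{assu:hess-lip-like} and \aref{assu:hess-approx} are assumed, Lemma~\ref{lemma:ratios-converge} then applies: picking $\varepsilon < 3/4$, there is a neighborhood $\mathcal{U}_0$ of $\optpoint$ on which $\rtrrho_k \geq 1 - \varepsilon > 1/4$, so by the update rule~\eqref{eq:rtr-dynamics} the radius never decreases while $x_k \in \mathcal{U}_0$. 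Because $\sequence{x_k} \to \optpoint$, there is $k_0$ with $x_k \in \mathcal{U}_0$ for all $k \geq k_0$, hence $\Delta_k \geq \Delta_{k_0} =: \delta > 0$ for all $k \geq k_0$. I expect this to be the main obstacle: everything downstream needs this uniform lower bound on the trust-region radius (it is exactly what keeps the trust-region branch of truncation~1 from firing), and it is obtained only by chaining \cref{cond:better-than-cauchy}, \cref{cond:strong-vs}, Lemma~\ref{lemma:ratios-converge}, and the monotonicity of $\Delta_k$ under the dynamics.

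Next I would invoke Lemma~\ref{lemma:cg-iterate-existence} with parameters $\muflat$, $\power$ and $\hessapproxconst$ (the constant of \aref{assu:hess-approx}, which guarantees $\|\linearmap_k - \hess \mfc(x_k)\| \leq \hessapproxconst \|\grad \mfc(x_k)\|$): it yields a neighborhood $\mathcal{U}_1$ of $\optpoint$ such that, for each $x_k \in \mathcal{U}_1$, there is $n \in \{0, \dots, \sdim\}$ for which the $n$th iteration of \cg{} on $(\linearmap_k, -\grad \mfc(x_k))$ is well defined, with $\|\cgvec_n\| \leq \muflat^{-1}\|\grad \mfc(x_k)\|$ and $\|\cgres_n\| \leq \|\grad \mfc(x_k)\|^{1 + \power}$. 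Then I would shrink to a neighborhood $\mathcal{U} \subseteq \mathcal{U}_0 \cap \mathcal{U}_1$ on which additionally $\|\grad \mfc(x_k)\|^{\power} \leq \tcgkappa$ and $\muflat^{-1}\|\grad \mfc(x_k)\| < \delta$; both hold eventually since $\grad \mfc(x_k) \to \grad \mfc(\optpoint) = \zeros$. For all $k$ large enough we have $k \geq k_0$ and $x_k \in \mathcal{U}$, so all of this is in force. The degenerate case $\grad \mfc(x_k) = \zeros$ is immediate: \tcg{} returns $s_k = \zeros$ and $\grad \rtrmodel_k(\zeros) = \grad \mfc(x_k) = \zeros$.

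Finally I would trace Algorithm~\ref{alg:tcg} run on $(\linearmap_k, -\grad \mfc(x_k), \Delta_k)$ for such $k$. Since the $n$th iteration of \cg{} is well defined, the inner products on line~\ref{line:inner-product} stay positive through the first $n$ steps, so the negative-curvature branch of truncation~1 cannot fire before iteration $n$. Since the \cg{} iterate norms are non-decreasing (Lemma~\ref{lemma:iterates-cg-grow}), $\|\cgvec_m\| \leq \|\cgvec_n\| \leq \muflat^{-1}\|\grad \mfc(x_k)\| < \delta \leq \Delta_k$ for all $m \leq n$, so the trust-region branch of truncation~1 cannot fire before iteration $n$ either. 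Hence \tcg{} reaches iteration $n$ unless truncation~2 already fired; and truncation~2 does fire at some iteration $m \leq n$, because at iteration $n$ (if not earlier) $\|\cgres_n\| \leq \|\grad \mfc(x_k)\|^{1 + \power} = \|\cgres_0\|\,\|\cgres_0\|^{\power} \leq \|\cgres_0\| \min(\|\cgres_0\|^{\power}, \tcgkappa)$, using $\|\cgres_0\| = \|\grad \mfc(x_k)\|$ and $\|\grad \mfc(x_k)\|^{\power} \leq \tcgkappa$. In every case the output is $s_k = \cgvec_m$ with $\|\cgres_m\| \leq \|\grad \mfc(x_k)\|^{1 + \power}$ --- by the truncation~2 inequality when $m < n$, and by Lemma~\ref{lemma:cg-iterate-existence} when $m = n$. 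As $\cgres_m = -\grad \rtrmodel_k(\cgvec_m)$, this gives $\|\grad \rtrmodel_k(s_k)\| \leq \|\grad \mfc(x_k)\|^{1 + \power}$ for all $k$ large, which is \cref{cond:small-model} with $\modelconst = 1$ and exponent $\power$.
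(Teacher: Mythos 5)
Your proof is correct and follows essentially the same approach as the paper's: establish that the trust-region radii are eventually bounded away from zero (via \cref{cond:better-than-cauchy}, \cref{cond:strong-vs}, Lemma~\ref{lemma:ratios-converge}, and the monotonicity of $\Delta_k$ when $\rtrrho_k \geq 1/4$), invoke Lemma~\ref{lemma:cg-iterate-existence} to get an iteration $n$ with both iterate and residual bounds, shrink the neighborhood so that the \tcg{} residual threshold is implied, and then argue via Lemma~\ref{lemma:iterates-cg-grow} that truncation~1 cannot fire before truncation~2 does. You spell out a couple of points the paper leaves implicit (the degenerate $\grad \mfc(x_k) = \zeros$ case, and the explicit form of the truncation-2 inequality), but the argument's structure and the lemmas it leans on are the same.
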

\begin{proof}
  The subproblem solver \tcg{} satisfies~\cref{cond:better-than-cauchy}
  and~\cref{cond:strong-vs} around $\optpoint$ (see the beginning of
  Section~\ref{subsec:tcg-c1-c2} and Proposition~\ref{prop:rtr-tcg-strong-vs}).
  Lemma~\ref{lemma:ratios-converge} gives that $\liminf_{k \to +\infty}
  \rtrrho_k \geq 1$.
  This implies that the radii $\sequence{\Delta_k}$ are bounded away from zero
  because the update mechanism~\eqref{eq:rtr-dynamics} does not decrease the
  radius when $\rtrrho_k \geq \frac{1}{4}$.
  Given $\muflat < \plconstant$, the \tcg{} parameter $\power$, and the constant
  $\hessapproxconst$ from~\aref{assu:hess-approx}, let $\mathcal{U}$ be a
  neighborhood of $\optpoint$ as in Lemma~\ref{lemma:cg-iterate-existence}.
  There exists an integer $K$ such that for all $k \geq K$ we have
  \begin{align*}
    x_k \in \mathcal{U}, && \|\grad \mfc(x_k)\|^\power \leq \tcgkappa && \text{and} && \frac{1}{\muflat}\|\grad \mfc(x_k)\| < \Delta_k.
  \end{align*}
  Now let $k \geq K$.
  Lemma~\ref{lemma:cg-iterate-existence} provides an integer $n$ such that the
  $n$th iteration of \cg{} on $(\linearmap_k, -\grad \mfc(x_k))$ is well
  defined.
  Moreover, the $n$th iterate $\cgvec_n$ and residual $\cgres_n$ satisfy
  \begin{align*}
    \|\cgvec_n\| \leq \frac{1}{\muflat}\|\grad \mfc(x_k)\| < \Delta_k && \text{and} && \|\cgres_n\| \leq \|\grad \mfc(x_k)\|^{1 + \power}.
  \end{align*}
  The residual norm $\|\cgres_n\|$ is compatible with the termination criterion
  in line~\ref{line:truncation-2}.
  We deduce that \tcg{} performs a certain number $p \leq n$ of iterations on
  the inputs $(\linearmap_k, -\grad \mfc(x_k))$.
  Recall that the iterates grow in norm
  (Lemma~\ref{lemma:iterates-cg-grow}).
  Hence at iteration $p$ the termination criteria in
  line~\ref{line:truncation-1} are incompatible with \emph{(i)} the fact that
  the $p$th iteration is well defined and \emph{(ii)} the inequality
  $\|\cgvec_p\| \leq \|\cgvec_n\| < \Delta_k$.
  It follows that \tcg{} must terminate from the stopping criterion on the
  residual norm in line~\ref{line:truncation-2}.
  We obtain the inequality $\|\grad \rtrmodel_k(s_k)\| \leq \|\grad
  \mfc(x_k)\|^{1 + \power}$, and so~\cref{cond:small-model} holds.
\end{proof}

\subsection{Capture and order of convergence}
\label{subsec:captureandorder}

In this section we consider a general subproblem solver compatible
with~\cref{cond:better-than-cauchy}, \cref{cond:strong-vs}
and~\cref{cond:small-model},
with \tcg{} as the motivating example.
We derive a capture result and superlinear rates of convergence.

\paragraph{Capture of the iterates.}

First, we show that~\cref{cond:better-than-cauchy} and~\cref{cond:strong-vs}
imply that the iterates of TR locally satisfy a condition known as the
\emph{strong decrease property}~\eqref{eq:strong-decrease} (see~\citep{absil2005convergence} and
also~\citep[\S3.2]{rebjock2023fast} for more literature).
The proofs below combine the bound~\cref{cond:strong-vs} and arguments that appear
in~\cite[Thm.~4.4]{absil2005convergence}.

\begin{lemma}\label{lemma:ratio-hess-inner}
  Suppose that~\aref{assu:hess-approx} holds around $\optpoint \in \optimalset$.
  Given $\lamsharp > \lambda_{\max}(\hess \mfc(\optpoint))$, there exists a
  neighborhood $\mathcal{U}$ of $\optpoint$ such that if an iterate $x_k$ is in
  $\mathcal{U}$ then
  \begin{align}\label{eq:ratio-hess-inner}
    \frac{\|\grad \mfc(x_k)\|^3}{\inner{\grad \mfc(x_k)}{\linearmap_k[\grad \mfc(x_k)]}} \geq \frac{1}{\lamsharp}\|\grad \mfc(x_k)\|.
  \end{align}
  (We define the expression on the left as $+\infty$ when the denominator is
  zero.)
\end{lemma}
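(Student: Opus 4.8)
The plan is to reduce \eqref{eq:ratio-hess-inner} to a one-sided bound on the quadratic form. Write $g = \grad\mfc(x_k)$. If $g = \zeros$ there is nothing to prove (the left side of \eqref{eq:ratio-hess-inner} is $+\infty$, the right side is $0$), so assume $g \neq \zeros$. Reading the left-hand side of \eqref{eq:ratio-hess-inner} as $+\infty$ whenever its denominator is $\leq 0$ (in particular when it is zero, as stipulated), the inequality \eqref{eq:ratio-hess-inner} is equivalent to $\inner{g}{\linearmap_k[g]} \leq \lamsharp\|g\|^2$: when $\inner{g}{\linearmap_k[g]} > 0$ this rearranges to \eqref{eq:ratio-hess-inner} upon dividing $\|g\|^3$ through the bound, and when $\inner{g}{\linearmap_k[g]} \leq 0$ the left side of \eqref{eq:ratio-hess-inner} is $+\infty$ and the claim is vacuous. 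So it suffices to produce a neighborhood $\mathcal U$ of $\optpoint$ on which $\inner{g}{\linearmap_k[g]} \leq \lamsharp\|g\|^2$.

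For this I would combine \aref{assu:hess-approx} with two continuity facts. First, for $x_k$ near $\optpoint$ we have $\|\linearmap_k - \hess\mfc(x_k)\| \le \hessapproxconst\|g\|$ by \aref{assu:hess-approx}, so
\[
  \inner{g}{\linearmap_k[g]} \;\le\; \inner{g}{\hess\mfc(x_k)[g]} + \hessapproxconst\|g\|^3 \;\le\; \big(\lambda_{\max}(\hess\mfc(x_k)) + \hessapproxconst\|g\|\big)\|g\|^2 .
\]
Second, since $\mfc$ is $\smooth{2}$ the map $x \mapsto \lambda_{\max}(\hess\mfc(x))$ is continuous, whence $\lambda_{\max}(\hess\mfc(x_k)) \to \lambda_{\max}(\hess\mfc(\optpoint))$ as $x_k \to \optpoint$; and since $\optpoint \in \optimalset$ is a local minimum of $\mfc$ it is a critical point, so $\|g\| = \|\grad\mfc(x_k)\| \to 0$ by continuity of $\grad\mfc$. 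Because $\lamsharp > \lambda_{\max}(\hess\mfc(\optpoint))$, I can shrink $\mathcal U$ so that $\lambda_{\max}(\hess\mfc(x_k)) + \hessapproxconst\|g\| < \lamsharp$ holds on it; the previous display then gives $\inner{g}{\linearmap_k[g]} < \lamsharp\|g\|^2$, and hence \eqref{eq:ratio-hess-inner}, for every $x_k \in \mathcal U$.

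There is essentially no obstacle here: the lemma is a direct consequence of the Hessian-approximation assumption \aref{assu:hess-approx} together with the continuity of the largest eigenvalue of $\hess\mfc$ and the continuity of $\grad\mfc$ (which vanishes at the critical point $\optpoint$). The only point that deserves a word of care is the bookkeeping when the denominator $\inner{g}{\linearmap_k[g]}$ is zero or negative --- which can occur arbitrarily close to $\optpoint$ at a non-isolated minimum, where $\hess\mfc$ carries small eigenvalues of either sign --- and this is precisely what the $+\infty$ convention in the statement is there to absorb.
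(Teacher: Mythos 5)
Your argument is correct and follows the same basic route as the paper: reduce \eqref{eq:ratio-hess-inner} to a bound on the quadratic form, peel off the error $\linearmap_k - \hess\mfc(x_k)$ via \aref{assu:hess-approx}, and shrink the neighborhood by continuity since $\lamsharp$ strictly exceeds $\lambda_{\max}(\hess\mfc(\optpoint))$ and $\grad\mfc(\optpoint)=\zeros$. One difference worth flagging: you establish only the one-sided bound $\inner{g}{\linearmap_k[g]} \le \lamsharp\|g\|^2$ and rely on your extended $+\infty$ convention when the denominator is nonpositive, whereas the paper instead bounds the operator norm, $\|\linearmap_k\| \le \|\hess\mfc(x_k)\| + \hessapproxconst\|g\| \le \lamsharp$, and applies Cauchy--Schwarz, which yields the \emph{two-sided} bound $\bigl|\inner{g}{\linearmap_k[g]}\bigr| \le \lamsharp\|g\|^2$. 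That stronger conclusion is what is actually consumed in Lemma~\ref{lemma:strong-decrease}, since \cref{cond:better-than-cauchy} carries an absolute value $\bigl|\inner{\grad\mfc(x_k)}{\linearmap_k[\grad\mfc(x_k)]}\bigr|$ in its denominator. Your proof upgrades immediately: because $\optpoint$ is a local minimum, $\hess\mfc(\optpoint)\succeq 0$, so $\|\hess\mfc(\optpoint)\| = \lambda_{\max}(\hess\mfc(\optpoint)) < \lamsharp$, and replacing $\lambda_{\max}(\hess\mfc(x_k))$ by the (also continuous) $\|\hess\mfc(x_k)\|$ in your display gives the two-sided inequality. With that small adjustment your proof is equivalent to the paper's.
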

\begin{proof}
  Let $\mathcal{U}$ be a neighborhood of $\optpoint$ such that $\|\hess
  \mfc(x)\| + \hessapproxconst \|\grad \mfc(x)\| \leq \lamsharp$ for all $x \in
  \mathcal{U}$, where $\hessapproxconst$ is the constant that appears
  in~\aref{assu:hess-approx}.
  Suppose that $x_k$ is in $\mathcal{U}$.
  \cauchyschwarz{} yields
  \begin{align*}
    \inner{\grad \mfc(x_k)}{\linearmap_k[\grad \mfc(x_k)]} \leq \lamsharp\|\grad \mfc(x_k)\|^2
  \end{align*}
  because $\|\linearmap_k\| \leq \|\hess \mfc(x_k)\| + \|\linearmap_k - \hess
  \mfc(x_k)\| \leq \lamsharp$.
  We obtain~\eqref{eq:ratio-hess-inner} from this.
\end{proof}

\begin{lemma}\label{lemma:retr-dist-bound}
  Suppose~\cref{cond:strong-vs} holds in a neighborhood $\mathcal{U}$ of
  $\optpoint \in \optimalset$.
  Possibly after restricting $\mathcal{U}$, there exists $\retrdistboundconst
  \geq 1$ such that
  \begin{align}\label{eq:retr-dist-bound}
    \dist(\retr_{x_k}(s_k), x_k) \leq \retrdistboundconst \|s_k\|
  \end{align}
  for all iterates $x_k$ in $\mathcal{U}$ and steps $s_k$, where $\dist$ is the Riemannian distance on $\manifold$.
\end{lemma}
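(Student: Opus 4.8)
The plan is to bound the Riemannian distance $\dist(\retr_{x_k}(s_k), x_k)$ by the length of the smooth curve $c \colon \interval{0}{1} \to \manifold$ defined by $c(t) = \retr_{x_k}(t s_k)$, which joins $c(0) = x_k$ to $c(1) = \retr_{x_k}(s_k)$ since $\retr_{x_k}(\zeros) = x_k$. In the Euclidean case the statement is immediate with $\retrdistboundconst = 1$, because $\retr_{x_k}(s_k) = x_k + s_k$; the only real work is to get a clean estimate that is \emph{uniform} over a neighborhood of $\optpoint$ in the manifold case.

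First I would exploit that $\retr$ is a smooth map on the tangent bundle with $\D\retr_x(\zeros) = I$ for every $x$, so that $(x, v) \mapsto \|\D\retr_x(v)\|$ (the operator norm with respect to the metrics at $x$ and at $\retr_x(v)$) is continuous and equals $1$ on the zero section. A routine continuity argument in a chart around $\optpoint$ then yields a neighborhood $\mathcal{W}$ of $(\optpoint, \zeros)$ in $\tangent\manifold$ on which this operator norm is at most $2$ (any constant exceeding $1$ would do). Next, since $\grad\mfc$ is continuous with $\grad\mfc(\optpoint) = \zeros$ and \cref{cond:strong-vs} gives $\|s_k\| \leq \strongvsconst\|\grad\mfc(x_k)\|$ for $x_k$ in $\mathcal{U}$, I would shrink $\mathcal{U}$ so that for every iterate $x_k \in \mathcal{U}$ the whole segment $\{(x_k, t s_k) : t \in \interval{0}{1}\}$ lies in $\mathcal{W}$. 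This is precisely where \cref{cond:strong-vs} enters: it forces $s_k$ to be small whenever $x_k$ is near $\optpoint$, which is what makes the bound uniform.

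With $(x_k, t s_k) \in \mathcal{W}$ for all $t$, the length of $c$ satisfies
\begin{align*}
  \int_0^1 \|c'(t)\| \diff t = \int_0^1 \|\D\retr_{x_k}(t s_k)[s_k]\| \diff t \leq 2 \|s_k\|,
\end{align*}
and since the Riemannian distance between two points is at most the length of any curve joining them, we get $\dist(\retr_{x_k}(s_k), x_k) \leq 2\|s_k\|$, so $\retrdistboundconst = 2 \geq 1$ works. The only mildly delicate point is the uniformity over $\mathcal{U}$ of the bound on $\|\D\retr_x(v)\|$, but this is a standard property of retractions (see, e.g.,~\citep{boumal2020introduction}), and the rest is a direct estimate; there is no real obstacle here beyond bookkeeping the successive shrinkings of $\mathcal{U}$.
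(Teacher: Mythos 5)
Your argument is correct. The paper itself dispenses with the proof by citing \cite[Lem.~6]{ring2012optimization} ``because $\retr$ is smooth,'' and the mechanism you write out is exactly what underlies that cited result: bound the distance by the length of the curve $t \mapsto \retr_{x_k}(t s_k)$, and use smoothness of $\retr$ plus $\D\retr_x(\zeros) = I$ to control $\|\D\retr_x(v)\|$ uniformly on a neighborhood of $(\optpoint, \zeros)$ in $\tangent\manifold$. One thing your write-up makes explicit that the paper's one-liner leaves implicit is the actual role of \cref{cond:strong-vs} in the lemma's hypotheses: such a bound is only local in the tangent-bundle variable, so you must know that $\|s_k\|$ shrinks to zero as $x_k \to \optpoint$, which is precisely what $\|s_k\| \leq \strongvsconst\|\grad\mfc(x_k)\|$ together with $\grad\mfc(\optpoint) = \zeros$ provides; the cited Ring--Wirth lemma likewise holds only for tangent vectors of bounded norm. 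So you have not taken a genuinely different route, but you have unfolded the citation into a self-contained argument and correctly identified why \cref{cond:strong-vs} must appear among the hypotheses. No gaps.
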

\begin{proof}
  This is a consequence of~\cite[Lem.~6]{ring2012optimization} because $\retr$
  is smooth.
\end{proof}

In the Euclidean case, $\dist(x, y) = \|x - y\|$ and for $\retr_x(s) = x+s$ we can clearly take $\retrdistboundconst = 1$.
In general, we can take $\retrdistboundconst > 1$ as close to $1$ as desired
after sufficiently restricting the neighborhood.

\begin{lemma}\label{lemma:strong-decrease}
  Assume that~\aref{assu:hess-approx} holds around a local minimum $\optpoint
  \in \optimalset$.
  Let TR generate iterates $\sequence{x_k}$ using a subproblem solver
  satisfying~\cref{cond:better-than-cauchy} and~\cref{cond:strong-vs} with
  constants $\rtrsufficientdecrease$ and $\strongvsconst$ around $\optpoint$.
  Given $\lamsharp > \lambda_{\max}(\hess \mfc(\optpoint))$, there exists a
  neighborhood $\mathcal{U}$ of $\optpoint$ such that if $x_k$ is in
  $\mathcal{U}$ then
  \begin{equation}\label{eq:strong-decrease}
    \begin{aligned}
      &\mfc(x_k) - \mfc(x_{k + 1}) \geq \frac{\rtrsufficientdecrease\rtrrho'}{\retrdistboundconst} \min\!\Big(1, \frac{1}{\strongvsconst\lamsharp}\Big) \|\grad \mfc(x_k)\| \dist(x_k, x_{k + 1})\\
      &\text{and}\quad x_k \in \optimalset \;\;\Rightarrow\;\; x_{k + 1} = x_k,
    \end{aligned}
  \end{equation}
  where $\rtrrho'$ is as in the algorithm definition~\eqref{eq:rtr-dynamics} and
  $\retrdistboundconst$ is as in Lemma~\ref{lemma:retr-dist-bound}.
\end{lemma}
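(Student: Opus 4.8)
The plan is to shrink $\mathcal{U}$ so that~\cref{cond:strong-vs}, Lemma~\ref{lemma:retr-dist-bound} (with some $\retrdistboundconst \geq 1$) and Lemma~\ref{lemma:ratio-hess-inner} (with the given $\lamsharp$) all hold on it, and then to argue by cases according to whether iteration $k$ is successful. The statement is essentially a recombination of~\cref{cond:better-than-cauchy}, \cref{cond:strong-vs}, \aref{assu:hess-approx} (through Lemma~\ref{lemma:ratio-hess-inner}) and Lemma~\ref{lemma:retr-dist-bound}, in the spirit of~\cite[Thm.~4.4]{absil2005convergence}.

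First I would dispose of the trivial situations. If iteration $k$ is unsuccessful, then $x_{k + 1} = x_k$ and both sides of the first line of~\eqref{eq:strong-decrease} vanish. If $\grad\mfc(x_k) = \zeros$---in particular if $x_k \in \optimalset$, since then $\optpoint$-type first-order optimality gives $\grad\mfc(x_k) = \zeros$---then~\cref{cond:strong-vs} forces $s_k = \zeros$, hence $\retr_{x_k}(s_k) = x_k$ and $x_{k + 1} = x_k$ whether or not the step is accepted; this gives the second line of~\eqref{eq:strong-decrease}, and the first line holds with both sides equal to $0$.

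It then remains to treat a successful iteration with $\grad\mfc(x_k) \neq \zeros$. There $x_{k + 1} = \retr_{x_k}(s_k)$, and~\cref{cond:better-than-cauchy} makes the model decrease $\rtrmodel_k(\zeros) - \rtrmodel_k(s_k)$ strictly positive, so the definition of $\rtrrho_k > \rtrrho'$ yields $\mfc(x_k) - \mfc(x_{k + 1}) \geq \rtrrho'\big(\rtrmodel_k(\zeros) - \rtrmodel_k(s_k)\big)$. I would then lower-bound the model decrease: combining~\cref{cond:better-than-cauchy} with Lemma~\ref{lemma:ratio-hess-inner} (whose argument also bounds $\big|\inner{\grad\mfc(x_k)}{\linearmap_k[\grad\mfc(x_k)]}\big| \leq \lamsharp\|\grad\mfc(x_k)\|^2$) gives
\begin{align*}
\rtrmodel_k(\zeros) - \rtrmodel_k(s_k) \geq \rtrsufficientdecrease\,\|\grad\mfc(x_k)\|\,\min\!\Big(\Delta_k,\ \tfrac{1}{\lamsharp}\|\grad\mfc(x_k)\|\Big).
\end{align*}
The crux is to replace both arguments of the $\min$ by multiples of $\dist(x_k, x_{k + 1})$. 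Feasibility of $s_k$ for~\eqref{eq:rtr-subproblem} and Lemma~\ref{lemma:retr-dist-bound} give $\Delta_k \geq \|s_k\| \geq \retrdistboundconst^{-1}\dist(x_k, x_{k + 1})$; and~\cref{cond:strong-vs} together with Lemma~\ref{lemma:retr-dist-bound} give $\|\grad\mfc(x_k)\| \geq \strongvsconst^{-1}\|s_k\| \geq (\strongvsconst\retrdistboundconst)^{-1}\dist(x_k, x_{k + 1})$ (the degenerate case $s_k = \zeros$ being already covered). Factoring the common nonnegative quantity $\retrdistboundconst^{-1}\dist(x_k, x_{k + 1})$ out of the $\min$ then yields
\begin{align*}
\min\!\Big(\Delta_k,\ \tfrac{1}{\lamsharp}\|\grad\mfc(x_k)\|\Big) \geq \frac{1}{\retrdistboundconst}\min\!\Big(1,\ \tfrac{1}{\strongvsconst\lamsharp}\Big)\dist(x_k, x_{k + 1}),
\end{align*}
and plugging this into the two displays above and multiplying by $\rtrrho'$ gives exactly~\eqref{eq:strong-decrease}.

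I do not expect a genuine obstacle here. The only points needing care are the bookkeeping of the case distinctions (unsuccessful / $\grad\mfc(x_k) = \zeros$ / generic successful step) and the elementary identity $\min(c a, c b) = c\min(a, b)$ for $c \geq 0$, which is what lets one extract the common factor $\retrdistboundconst^{-1}\dist(x_k, x_{k + 1})$ from the $\min$; one also implicitly uses $\min\!\big(1, \tfrac{1}{\strongvsconst\lamsharp}\big) \leq 1$, so that in the branch where the $\min$ selects $\Delta_k$ the bound obtained is no weaker than the claimed one. Throughout, the Cauchy--Schwarz bound $\|\linearmap_k\| \leq \lamsharp$ near $\optpoint$ (from~\aref{assu:hess-approx} and $\lamsharp > \lambda_{\max}(\hess\mfc(\optpoint))$) is used exactly as in the proof of Lemma~\ref{lemma:ratio-hess-inner}.
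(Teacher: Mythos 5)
Your proof is correct and takes essentially the same approach as the paper: both start from \cref{cond:better-than-cauchy}, plug in Lemma~\ref{lemma:ratio-hess-inner} and \cref{cond:strong-vs}, then use feasibility $\Delta_k \geq \|s_k\|$ and Lemma~\ref{lemma:retr-dist-bound} to bring a factor of $\dist(x_k, x_{k+1})$ out of the $\min$ before applying the ratio test. The only cosmetic difference is that the paper first factors $\|s_k\|$ out of the $\min$ and then converts to $\dist(x_k, x_{k+1})$, while you convert both arguments of the $\min$ to multiples of $\dist(x_k, x_{k+1})$ directly; the resulting constants and logic are identical.
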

\begin{proof}
  The implication $x_k \in \optimalset \Rightarrow x_{k + 1} = x_k$ is
  immediate because the gradient is zero on the optimal set $\optimalset$
  and~\cref{cond:strong-vs} then implies that $s_k = \zeros$.
  We now prove the lower-bound on the function decrease.
  Let $\mathcal{U}$ be a neighborhood of $\optpoint$ where the inequalities
  in~\cref{cond:strong-vs},~\eqref{eq:ratio-hess-inner}
  and~\eqref{eq:retr-dist-bound} hold.
  Consider $x_k \in \mathcal{U}$.
  Starting from the inequality in~\cref{cond:better-than-cauchy},
  apply~\eqref{eq:ratio-hess-inner} then~\cref{cond:strong-vs} to confirm that
  \begin{align}\label{eq:model-decrease-intermediate}
      \rtrmodel_k(0) - \rtrmodel_k(s_k) \geq \rtrsufficientdecrease\min\!\left(\Delta_k, \frac{\|s_k\|}{\strongvsconst\lamsharp}\right) \|\grad \mfc(x_k)\|.
  \end{align}
  Since $\Delta_k \geq \|s_k\|$, factor $\|s_k\|$ out of the min and use
  $\dist(x_k, x_{k + 1}) \leq \retrdistboundconst \|s_k\|$ (owing to
  Lemma~\ref{lemma:retr-dist-bound}) to claim
  \begin{align}\label{eq:model-decrease-intermediate-bis}
      \rtrmodel_k(0) - \rtrmodel_k(s_k) \geq \frac{\rtrsufficientdecrease}{\retrdistboundconst} \min\!\left(1, \frac{1}{\strongvsconst\lamsharp}\right) \|\grad \mfc(x_k)\| \dist(x_k, x_{k+1}).
  \end{align}
  Now two cases can happen from the TR dynamics~\eqref{eq:rtr-dynamics}.
  Either the step is rejected ($x_{k+1} = x_k$) and~\eqref{eq:strong-decrease} holds trivially.
  Or the step is accepted ($x_{k+1} = \retr_{x_k}(s_k)$) because
  $\mfc(x_k) - \mfc(\retr_{x_k}(s_k)) \geq
  \rtrrho'\big(\rtrmodel_k(\zeros) - \rtrmodel_k(s_k)\big)$,
  in which case~\eqref{eq:strong-decrease} also holds.
  (Note: $\rtrrho'$ could be improved to $1-o(1)$.)
\end{proof}

Assuming additionally that $\mfc$ is $\plconstant$-\eqref{eq:pl} and $\hess
\mfc$ is locally Lipschitz around $\optpoint$ (\aref{assu:hess-lip}), and given
$\muflat < \plconstant$, we know that \tcg{} satisfies~\cref{cond:strong-vs}
with constant $1/\muflat$ (Proposition~\ref{prop:rtr-tcg-strong-vs}).
In this case we can specialize Lemma~\ref{lemma:strong-decrease} and
obtain~\eqref{eq:strong-decrease} with constants $\rtrsufficientdecrease = 1/2$
and $\strongvsconst = 1/\muflat$.

The strong decrease property notably leads to a capture result:
see~\citep{absil2005convergence} and comments in~\citep[\S3.2]{rebjock2023fast}.
To secure it, we need to ensure that TR accumulates only at critical points.
That is effectively the case.
However, to formalize this globally we would need a few technical assumptions (see for
example~\citep[Thm.~7.4.4]{absil2008optimization} and
\citep[Prop.~6.25]{boumal2020introduction}).
Fortunately, we only need this property locally around local minima with a \pl{}
condition.
As it turns out, this frees us from those technicalities.

\begin{proposition}[Capture]\label{prop:rtr-tcg-capture}
  Suppose that~\aref{assu:hess-lip-like}, \aref{assu:hess-approx}
  and~\eqref{eq:pl} hold around $\optpoint \in \optimalset$.
  Let TR generate a sequence $\sequence{x_k}$ with a subproblem solver that
  satisfies~\cref{cond:better-than-cauchy} and~\cref{cond:strong-vs}.
  Given a neighborhood $\mathcal{U}$ of $\optpoint$, there exists a neighborhood
  $\mathcal{V} \subseteq \mathcal{U}$ of $\optpoint$ such that if an iterate of TR enters
  $\mathcal{V}$ then all the subsequent iterates stay in $\mathcal{U}$ and the
  sequence converges to some point $x_\infty \in \mathcal{U} \cap \optimalset$.
\end{proposition}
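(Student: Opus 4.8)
The plan is to follow the classical capture argument built on the strong decrease property~\eqref{eq:strong-decrease} established in Lemma~\ref{lemma:strong-decrease}, combined with the \loja{}-type inequality that~\eqref{eq:pl} provides (this is the route of~\citep{absil2005convergence}; see also~\cite[\S3.2]{rebjock2023fast}). First I would reduce to a convenient neighborhood. Since~\eqref{eq:pl} holds around $\optpoint$ and $\mfc$ is $\smooth{2}$, it is also~\eqref{eq:morse-bott} at $\optpoint$, so $\mfcopt$ is the (locally) minimal value of $\mfc$; hence I may shrink the given $\mathcal{U}$ to a metric ball $\ball(\optpoint, 2r)$ on which $\mfc \geq \mfcopt$, \eqref{eq:pl} holds, Lemma~\ref{lemma:strong-decrease} applies (giving~\eqref{eq:strong-decrease} with some constant $c > 0$), and Lemmas~\ref{lemma:ratios-converge} and~\ref{lemma:ratio-hess-inner} together with~\aref{assu:hess-approx} are all available. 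I would also record that TR is monotone, $\mfc(x_{k+1}) \leq \mfc(x_k)$ for every $k$ (rejected steps do not move, accepted steps passed the ratio test with nonnegative model decrease from~\cref{cond:better-than-cauchy}). Finally, \eqref{eq:pl} yields the concave increasing desingularizing function $\phi(t) = \sqrt{2t/\plconstant}$, which satisfies $\phi'\big(\mfc(x) - \mfcopt\big)\,\|\grad \mfc(x)\| \geq 1$ for every $x$ in this neighborhood with $\mfc(x) > \mfcopt$; iterates with $\mfc(x_k) = \mfcopt$ lie in $\optimalset$, where the second line of~\eqref{eq:strong-decrease} freezes the sequence, so those cases are trivial.

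The core is a self-improving length estimate. If $x_{k_0}, \dots, x_k$ all lie in $\ball(\optpoint, 2r)$, then applying~\eqref{eq:strong-decrease} at each index and using concavity of $\phi$,
\begin{align*}
  \phi\big(\mfc(x_j) - \mfcopt\big) - \phi\big(\mfc(x_{j+1}) - \mfcopt\big)
  \;\geq\; \phi'\big(\mfc(x_j) - \mfcopt\big)\big(\mfc(x_j) - \mfc(x_{j+1})\big)
  \;\geq\; c\,\dist(x_j, x_{j+1}),
\end{align*}
so telescoping gives $\sum_{j = k_0}^{k} \dist(x_j, x_{j+1}) \leq \tfrac{1}{c}\,\phi\big(\mfc(x_{k_0}) - \mfcopt\big)$. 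I then choose $\mathcal{V} = \ball(\optpoint, \delta) \subseteq \mathcal{U}$ with $\delta$ small enough that $\delta + \tfrac{1}{c}\sup_{\ball(\optpoint, \delta)} \phi\big(\mfc(\cdot) - \mfcopt\big) < r$; this is possible because $\mfc$ is continuous, $\mfc(\optpoint) = \mfcopt$, and $\phi(t) \to 0$ as $t \to 0$. A triangle-inequality induction then shows that if an iterate $x_{k_0}$ enters $\mathcal{V}$ then $\dist(x_k, \optpoint) < r$ for all $k \geq k_0$ (the slightly larger radius $2r$ provides the buffer needed to legitimately invoke the lemmas at the intermediate iterates before one knows they stay put). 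Hence the iterates never leave $\ball(\optpoint, r) \subseteq \mathcal{U}$ and $\sum_{j \geq k_0}\dist(x_j, x_{j+1}) < \infty$, so $\sequence{x_k}$ is a Cauchy sequence for the Riemannian distance and converges to some $x_\infty \in \overline{\ball(\optpoint, r)} \subseteq \mathcal{U}$ (the strict inequality in the choice of $\delta$ actually places $x_\infty$ in the open ball).

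It remains to show $x_\infty \in \optimalset$. Once the iterates are confined to $\mathcal{U}$, Lemma~\ref{lemma:ratios-converge} gives (after a further shrink of $\mathcal{U}$) that $\rtrrho_k > \tfrac{1}{4}$ whenever $x_k \in \mathcal{U}$; by the update rules~\eqref{eq:rtr-dynamics} no step is rejected and no radius is ever shrunk, so $\sequence{\Delta_k}$ is bounded below by a positive constant for $k \geq k_0$. Combining the Cauchy decrease of~\cref{cond:better-than-cauchy} with Lemma~\ref{lemma:ratio-hess-inner} gives $\mfc(x_k) - \mfc(x_{k+1}) \geq c'\,\|\grad \mfc(x_k)\|\min\!\big(\Delta_k, \|\grad \mfc(x_k)\|/\lamsharp\big)$ for some $c' > 0$, and summing this telescoping series over the values (bounded below by $\mfcopt$) forces $\|\grad \mfc(x_k)\| \to 0$. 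By continuity $\grad \mfc(x_\infty) = 0$, and then~\eqref{eq:pl} gives $0 \leq \mfc(x_\infty) - \mfcopt \leq \tfrac{1}{2\plconstant}\|\grad \mfc(x_\infty)\|^2 = 0$, so $x_\infty$ attains the local minimum value $\mfcopt$, hence $x_\infty \in \mathcal{U} \cap \optimalset$. I expect the main obstacle to be exactly this bootstrap step: $\mathcal{V}$ must be fixed before one knows the iterates remain in $\mathcal{U}$, and the induction has to be closed so that the length estimate — which is only licensed while the iterates sit in the region where Lemma~\ref{lemma:strong-decrease} is valid — is precisely what confines them there, with enough slack left over that the limit falls in the open set $\mathcal{U}$ rather than on its boundary.
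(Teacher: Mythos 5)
Your argument is correct, and it is essentially the paper's argument unpacked. The paper proves capture by invoking the abstract machinery of \cite[Prop.~3.5, Cor.~3.7, Lem.~3.8]{rebjock2023fast} (the ``vanishing steps'' and ``bounded path length'' properties); your telescoping estimate with the desingularizing function $\phi(t) = \sqrt{2t/\plconstant}$, the self-improving induction confining the iterates, and the resulting Cauchy sequence are exactly what that machinery packages. Both proofs then close the loop the same way: once the iterates are trapped in $\mathcal{U}$, Lemma~\ref{lemma:ratios-converge} keeps $\rtrrho_k \geq 1/4$ so the radii are bounded below, and~\cref{cond:better-than-cauchy} with Lemma~\ref{lemma:ratio-hess-inner} forces the gradient to vanish along the sequence, placing the limit in $\optimalset$. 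The only cosmetic difference is that the paper phrases the last step as ``accumulation points are critical'' (the form required by the cited corollary) and reaches a contradiction, whereas you already have convergence from the Cauchy argument and simply note $\|\grad\mfc(x_k)\| \to 0$ by summability; these are logically interchangeable. Your explicit derivation buys self-containedness; the paper's citation buys brevity and reuse of its companion paper's infrastructure.
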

\begin{proof}
  We can restrict $\mathcal{U}$ so that all critical points in $\mathcal{U}$ are
  in $\optimalset$ because~\eqref{eq:pl} holds around $\optpoint$.
  We want to apply~\cite[Cor.~3.7]{rebjock2023fast} to $\mathcal{U}$.
  This requires the algorithm to satisfy two properties called \emph{vanishing
    steps} and \emph{bounded path length} in that reference.
  The vanishing steps property holds around $\optpoint$ because we
  assume~\cref{cond:strong-vs}.
  Moreover, the strong decrease from Lemma~\ref{lemma:strong-decrease} together
  with~\eqref{eq:pl} imply the bounded path length property
  (see~\cite[Lem~3.8]{rebjock2023fast}).
  To apply the stated corollary, we finally need to show that if
  $\sequence{x_k}$ accumulates at a point $x_\infty \in \mathcal{U}$ then
  $x_\infty \in \optimalset$.
  Restrict $\mathcal{U}$ so that if an iterate $x_k$ is in $\mathcal{U}$ then
  the associated ratio satisfies $\rtrrho_k \geq \frac{1}{4}$
  (Lemma~\ref{lemma:ratios-converge}).
  Given a number $\lamsharp > \lambda_{\max}(\hess \mfc(\optpoint))$, further
  restrict $\mathcal{U}$ so that the conclusions of
  Lemmas~\ref{lemma:ratio-hess-inner} and~\ref{lemma:strong-decrease} hold.
  Now invoke~\citep[Prop.~3.5]{rebjock2023fast}.
  We obtain an open neighborhood $\mathcal{V} \subseteq \mathcal{U}$ of
  $\optpoint$ such that if an iterate enters $\mathcal{V}$ then all subsequent
  iterates stay in $\mathcal{U}$.
  Suppose that the sequence $\sequence{x_k}$ accumulates at some point $x_\infty
  \in \mathcal{V}$.
  There exists an integer $K$ such that $x_k \in \mathcal{U}$ for all $k \geq
  K$.
  As a result the trust-region radii eventually stop decreasing: there exists a
  number $\Delta_{\min} > 0$ such that $\Delta_k \geq \Delta_{\min}$ for \emph{all} $k$.
  Suppose for contradiction that $\grad \mfc(x_\infty) \neq \zeros$.
  Then, consider condition~\cref{cond:better-than-cauchy} together with~\eqref{eq:ratio-hess-inner}, $\Delta_k \geq \Delta_{\min}$ and the fact that $\|\nabla \mfc(x_k)\|$ is bounded away from zero along a subsequence convergent to $x_\infty$.
  From this, deduce the existence of a number $\omega > 0$
  such that
  \begin{align*}
    \mfc(x_k) - \mfc(x_{k + 1}) = \rtrrho_k \big(\rtrmodel_k(\zeros) - \rtrmodel_k(s_k)\big) \geq \omega
  \end{align*}
  for infinitely many iterations $k$, all successful owing to $\rtrrho_k \geq 1/4$.
  This is incompatible with the facts that $\sequence{x_k}$ accumulates at
  $x_\infty$
  and that $\mfc(x_k)$ is decreasing.
  It follows that $\grad \mfc(x_\infty) = \zeros$
  and so $x_\infty \in \optimalset$.
\end{proof}

\paragraph{Order of convergence.}\label{par:order}

Now that local convergence to a point is secured,
we turn our focus to the convergence rate of TR.
As we show, the conditions~\cref{cond:better-than-cauchy},~\cref{cond:strong-vs}
and~\cref{cond:small-model} are sufficient to secure superlinear rates.

\begin{proposition}\label{prop:conv-rate}
  Suppose that~\aref{assu:hess-lip},~\aref{assu:hess-lip-like}
  and~\aref{assu:hess-approx} hold around $\optpoint \in \optimalset$.
  Let TR generate iterates $\sequence{x_k}$ converging to $\optpoint$ using a
  subproblem solver
  satisfying~\cref{cond:better-than-cauchy},~\cref{cond:strong-vs}
  and~\cref{cond:small-model} around $\optpoint$.
  Then the sequence $\sequence{\|\grad \mfc(x_k)\|}$ converges superlinearly to
  zero with order at least $1 + \power$.
\end{proposition}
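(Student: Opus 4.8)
The plan is to derive the superlinear rate from the three conditions via the standard chain that links one iterate's gradient to the next through the model. First I would use~\aref{assu:hess-lip-like}, \aref{assu:hess-approx} and~\cref{cond:strong-vs} to bound $\|\grad \mfc(x_{k+1})\|$ (or $\|\grad(\mfc\circ\retr_{x_k})(s_k)\|$) in terms of $\|\grad \rtrmodel_k(s_k)\|$ and a higher-order remainder. Concretely, writing $\hatmfc = \mfc\circ\retr_{x_k}$, a Taylor-type argument gives $\grad \hatmfc(s_k) = \grad \hatmfc(0) + \hess\hatmfc(0)[s_k] + O(\|s_k\|^2)$, and comparing with $\grad \rtrmodel_k(s_k) = \grad\mfc(x_k) + \linearmap_k[s_k]$, the difference of the linear terms is controlled by $\|\linearmap_k - \hess\mfc(x_k)\|\|s_k\| = O(\|\grad\mfc(x_k)\|^2)$ using~\aref{assu:hess-approx} and~\cref{cond:strong-vs}, while $\|s_k\|^2 = O(\|\grad\mfc(x_k)\|^2)$ again by~\cref{cond:strong-vs}. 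Then~\cref{cond:small-model} provides $\|\grad \rtrmodel_k(s_k)\| \leq \modelconst\|\grad\mfc(x_k)\|^{1+\power}$, and since $1 + \power < 2$, the $O(\|\grad\mfc(x_k)\|^2)$ terms are dominated. The remaining point is to pass from $\grad\hatmfc(s_k)$ (the gradient at the retracted point, pulled back) to $\grad\mfc(x_{k+1})$: on successful iterations $x_{k+1} = \retr_{x_k}(s_k)$ and $\D\retr_{x_k}$ is an isomorphism that is uniformly bounded above and below near $\optpoint$ (it equals $I$ at $s = 0$), so norms transfer with a constant arbitrarily close to $1$. On rejected iterations $x_{k+1} = x_k$, which must be handled separately; but since the sequence converges to $\optpoint$ and the radii are eventually bounded away from zero (by Lemma~\ref{lemma:ratios-converge}, as in the proof of Proposition~\ref{prop:rtr-tcg-small-model}), infinitely many steps are successful, and in fact one argues that once the iterates are close enough every step is accepted, so eventually all iterations are successful.

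Putting these together yields $\|\grad\mfc(x_{k+1})\| \leq c\,\|\grad\mfc(x_k)\|^{1+\power}$ for all $k$ large enough and some constant $c$. This alone does not immediately give convergence of order $1+\power$ in the usual sense unless the gradients are small enough that $c\|\grad\mfc(x_k)\|^\power < 1$; so the next step is to observe that because $\sequence{x_k} \to \optpoint$ and $\grad$ is continuous with $\grad\mfc(\optpoint) = 0$, we have $\|\grad\mfc(x_k)\| \to 0$, hence for $k$ large $c\|\grad\mfc(x_k)\|^\power \leq \tfrac12$, say. From that index onward the recursion $a_{k+1} \leq c\,a_k^{1+\power}$ with $a_k \to 0$ forces $a_k \to 0$ at rate exactly order $1+\power$: the ratio $a_{k+1}/a_k^{1+\power}$ is bounded, which is the definition of $Q$-superlinear convergence with order at least $1+\power$.

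The main obstacle I anticipate is the bookkeeping in the first step: carefully controlling the discrepancy between $\grad\rtrmodel_k(s_k)$ and the true gradient at the next iterate, making sure every error term is genuinely $O(\|\grad\mfc(x_k)\|^2)$ and therefore negligible against $\|\grad\mfc(x_k)\|^{1+\power}$, and correctly handling the retraction's differential so that the transfer constant does not degrade the exponent. A secondary subtlety is confirming that eventually every iteration is successful — this uses Lemma~\ref{lemma:ratios-converge} (so $\rtrrho_k \to 1 > \rtrrho'$ near $\optpoint$), which requires~\cref{cond:better-than-cauchy} and~\cref{cond:strong-vs}, both available by hypothesis — after which the rejected-step case disappears and the clean recursion applies for all large $k$.
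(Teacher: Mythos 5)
Your plan reaches the same superlinear estimate as the paper's proof, but via a different decomposition. The paper writes $v_k = s_k - \Log_{x_k}(x_{k+1})$, parallel transports $\grad\mfc(x_{k+1})$ back to $\tangent_{x_k}\manifold$, and bounds four error terms: the Riemannian Taylor remainder of the gradient (via~\aref{assu:hess-lip} and a standard Lipschitz-Hessian estimate), $\|\hess\mfc(x_k)\|\,\|v_k\|$ (with $\|v_k\| = O(\dist(x_k,x_{k+1})^2)$ from smoothness of $\retr$ and $\D\retr_x(\zeros) = I$), $\|\linearmap_k - \hess\mfc(x_k)\|\,\|s_k\|$ (via~\aref{assu:hess-approx}), and the model gradient $\|\grad\rtrmodel_k(s_k)\|$ (via~\cref{cond:small-model}). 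You instead pull everything back through $\retr$ and Taylor-expand $\grad\hatmfc = \grad(\mfc\circ\retr_{x_k})$ around the origin, which is conceptually simpler and avoids $v_k$ and parallel transport altogether. Both routes arrive at $\|\grad\mfc(x_{k+1})\| \leq C\|\grad\mfc(x_k)\|^2 + \modelconst\|\grad\mfc(x_k)\|^{1+\power}$ for large $k$, and both use Lemma~\ref{lemma:ratios-converge} to discharge rejected steps.

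Two points in your pullback route need to be made precise before the bound closes. First, the $O(\|s_k\|^2)$ remainder on $\grad\hatmfc(s_k) - \grad\hatmfc(0) - \hess\hatmfc(0)[s_k]$ is not a consequence of~\aref{assu:hess-lip-like}, which is only a one-sided scalar cubic bound on $\hatmfc$ and gives no control of the gradient; you need a local Lipschitz bound on $\hess\hatmfc$, which should be derived from~\aref{assu:hess-lip} together with the smoothness of $\retr$ (in the paper,~\aref{assu:hess-lip-like} only enters through Lemma~\ref{lemma:ratios-converge}, and the gradient remainder is handled by~\aref{assu:hess-lip}). Second, your linear term is $\hess\hatmfc(0)[s_k]$, but~\aref{assu:hess-approx} controls $\linearmap_k - \hess\mfc(x_k)$, not $\linearmap_k - \hess\hatmfc(0)$. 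For a general first-order retraction these two Hessians differ by an acceleration term of size $O(\|\grad\mfc(x_k)\|)$; the resulting extra contribution $\|\hess\hatmfc(0) - \hess\mfc(x_k)\|\,\|s_k\| = O(\|\grad\mfc(x_k)\|^2)$ is harmless, but without that observation the chain of inequalities does not close. The paper's decomposition sidesteps both issues by keeping $\hess\mfc(x_k)$ and the exponential map throughout, at the cost of introducing and bounding $v_k$.
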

\begin{proof}
  Let $\rtrsufficientdecrease$, $\strongvsconst$ and $\modelconst$ be the
  constants associated to~\cref{cond:better-than-cauchy},~\cref{cond:strong-vs}
  and~\cref{cond:small-model}.
  From Lemma~\ref{lemma:ratios-converge} and~\eqref{eq:rtr-dynamics} we deduce
  that all the steps are eventually successful.
  We consider large enough iterations $k$ so that it is the case.
  Since $\dist(x_k, x_{k+1}) \to 0$, the vector $\Log_{x_k}(x_{k + 1})$ is well
  defined for all $k$ large enough.
  (Recall from Table~\ref{table:euclidean-case} that $\Log_x(y) = y - x$ in the
  Euclidean case.)
  Let $v_k = s_k - \Log_{x_k}(x_{k + 1})$ (this is zero in the Euclidean case).
  Let $\ptransport{x_{k + 1}}{x_k}$ denote parallel transport along the
  minimizing geodesic connecting $x_{k+1}$ to $x_{k}$---this too is well defined
  for large enough $k$.
  (In the Euclidean case, $\ptransport{x_{k + 1}}{x_k}$ is identity.)
  The triangle inequality then provides:
  \begin{align}\label{eq:grad-bound}
    \|\grad \mfc(x_{k + 1})\| &= \big\|\ptransport{x_{k + 1}}{x_k}\grad \mfc(x_{k + 1}) - \grad \mfc(x_k) - \hess \mfc(x_k)[\Log_{x_k}(x_{k + 1})]\nonumber\\
                              &\qquad\qquad\qquad- \hess \mfc(x_k)[v_k] - \big(\linearmap_k - \hess \mfc(x_k)\big)[s_k] + \grad \rtrmodel_k(s_k)\big\|\nonumber\\
                              &\leq \frac{\hesslipconst}{2}\dist(x_k, x_{k + 1})^2 + \|\hess \mfc(x_k)\| \|v_k\| + \hessapproxconst \|\grad \mfc(x_k)\|\|s_k\| + \modelconst \|\grad \mfc(x_k)\|^{1 + \power},
  \end{align}
  where we invoked the Lipschitz continuity of the Hessian~\aref{assu:hess-lip}
  \citep[Cor.~10.56]{boumal2020introduction}, the bound
  in~\aref{assu:hess-approx}, and the bound in~\cref{cond:small-model}.
  The condition~\cref{cond:strong-vs} together with
  Lemma~\ref{lemma:retr-dist-bound} give that $\dist(x_k, x_{k + 1}) \leq
  \retrdistboundconst \|s_k\| \leq \retrdistboundconst \strongvsconst \|\grad
  \mfc(x_k)\|$ for large enough $k$.
  We now bound the quantity $\|v_k\|$.
  There exists a neighborhood of the zero section of the tangent bundle such
  that $(x, s) \mapsto (x, \retr_x(s))$ is a
  diffeomorphism~\citep[Cor.~10.27]{boumal2020introduction}.
  Moreover, the inverse function theorem implies that $\D \retr_x^{-1}(x) = I$
  for all $x \in \manifold$ because $\D \retr_x(\zeros) = I$.
  It follows that there exists a neighborhood $\mathcal{U}$ of $\optpoint$ such
  that for all $x, y \in \mathcal{U}$ we have
  \begin{align*}
    \retr_x^{-1}(y) = \retr_x^{-1}(x) + \D\retr_x^{-1}(x)[\Log_x(y)] + O(\dist(x, y)^2) = \Log_x(y) + O(\dist(x, y)^2).
  \end{align*}
  In particular, using the identity $s_k = \retr_{x_k}^{-1}(x_{k + 1})$, we find
  that there exists a constant $c_v$ such that $\|v_k\| \leq c_v\dist(x_k, x_{k
    + 1})^2$ holds for large enough $k$.
  It follows from~\eqref{eq:grad-bound} that
  \begin{align*}
    \|\grad \mfc(x_{k + 1})\| \leq \bigg( \retrdistboundconst^2 \strongvsconst^2 \Big(\frac{\hesslipconst}{2} + c_v \|\hess \mfc(x_k)\| \Big) + \strongvsconst \hessapproxconst \bigg) \|\grad \mfc(x_k)\|^2 + \modelconst \|\grad \mfc(x_k)\|^{1 + \power}
  \end{align*}
  for large enough $k$, showing the superlinear convergence of the sequence
  $\sequence{\|\grad \mfc(x_k)\|}$.
\end{proof}

From this result, we can deduce the superlinear convergence of the sequences
$\sequence{\dist(x_k, \optimalset)}$ and $\sequence{\mfc(x_k) - \mfcopt}$ as
follows.
Suppose that~\eqref{eq:pl} holds around $\optpoint$.
This is equivalent to the error bound condition, meaning that $\plconstant
\dist(x, \optimalset) \leq \|\grad \mfc(x)\|$ for all $x$ sufficiently close to
$\optpoint$ (see for example~\cite[Thm.~2]{karimi2016linear}
or~\cite[Rmk.~2.10]{rebjock2023fast}).
This implies that $\sequence{\dist(x_k, \optimalset)}$ converges to zero
superlinearly with order at least $1 + \power$.
Finally we can deduce that $\sequence{\mfc(x_k) - \mfcopt}$ converges to zero
with the same rate from~\eqref{eq:pl}.

\begin{proof}[Proof of Theorem~\ref{th:rtr-main-theorem}]
  First invoke Propositions~\ref{prop:rtr-tcg-strong-vs}
  and~\ref{prop:rtr-tcg-small-model} to obtain that \tcg{}
  satisfies~\cref{cond:strong-vs} and~\cref{cond:small-model}.
  The result then follows from Propositions~\ref{prop:rtr-tcg-capture}
  and~\ref{prop:conv-rate}.
\end{proof}

\begin{remark}\label{rmk:super-linear-but-not-quadratic}
  Theorem~\ref{th:rtr-main-theorem} ensures superlinear convergence but not
  quadratic convergence.
  And indeed, without further assumptions on $\mfc$ our proof cannot provide
  quadratic convergence for TR with \tcg{}.
  The reason is that the parameter $\power$ in
  Lemma~\ref{lemma:cg-iterate-existence} has to be \emph{strictly} less than
  $1$.
  We show here that this cannot be improved.
  Consider the function $\mfc(x, y) = \frac{3}{16}(1 + \frac{64}{3}x^2)y^2$ and
  the path $c(\varepsilon) = (\frac{1}{8}\sqrt{1 - \varepsilon},
  \sqrt{\varepsilon})$ for a small parameter $\varepsilon \geq 0$.
  The function $\mfc$ is polynomial and satisfies~\eqref{eq:pl} around $c(0)$.
  Moreover, the eigenvalues of $\hess \mfc(c(\varepsilon))$ are both positive
  for $\varepsilon$ sufficiently small.
  Consider \cg{} with inputs $\big(\hess \mfc(c(\varepsilon)), -\grad
  \mfc(c(\varepsilon))\big)$.
  Let $\cgres_1(\varepsilon)$ denote the residual of the first iteration and
  $\cgvec_2(\varepsilon)$ the iterate of the second iteration.
  Then, together with the gradient, they satisfy
  \begin{align*}
    \|\cgres_1(\varepsilon)\| \sim \varepsilon && \|\grad \mfc(c(\varepsilon))\|^2 \sim \varepsilon/4 && \text{and} && \|\cgvec_2(\varepsilon)\| \sim 1/6\varepsilon
  \end{align*}
  as $\varepsilon \to 0$, where $\sim$ denotes asymptotic equivalence.
  In particular, the inequality $\|\cgres_1(\varepsilon)\| \leq \|\grad
  \mfc(c(\varepsilon))\|^2$ cannot hold when $\varepsilon$ is sufficiently close
  to zero, and $\|\cgvec_2(\varepsilon)\| / \|\grad \mfc(c(\varepsilon))\| \to
  +\infty$ as $\varepsilon \to 0$.
  We conclude that both iterates of \cg{} are incompatible with the
  bounds~\eqref{eq:cg-iterate-guarantees}, even when $\varepsilon$ is
  arbitrarily close to zero.
  Since $\|\cgvec_2(\varepsilon)\| \to +\infty$ as $\varepsilon \to 0$, this
  also shows that \tcg{} with parameter $\power = 1$ suffers from the same
  shortcomings as the exact subproblem solver: it is not possible to ensure
  capture of the iterates (the condition~\cref{cond:strong-vs} breaks).
\end{remark}

\section{A note about quotient manifolds} \label{sec:quotients}

Symmetry is a common source of non-isolated minima in optimization.
Explicitly, suppose we seek to minimize $\mfc \colon \manifold \to \reals$ and
there exists an equivalence relation $\sim$ on $\manifold$ such that $x \sim y
\implies \mfc(x) = \mfc(y)$.
Under certain (well-understood) circumstances, the quotient space $\quotient{\manifold}{\equivrel}$ is itself a manifold, and all equivalence classes are submanifolds of $\manifold$ with equal dimension~\citep[\S3.4.1]{absil2008optimization}.

In this case, any local minimizer $\optpoint$ of $\mfc$ belongs to an equivalence class $[\optpoint] = \{ x \in \manifold : x \sim \optpoint \}$ of points which are all local minimizers.
If the symmetry stems from the action of a Lie group with positive dimension (e.g., rotations or translations), then each equivalence class is itself a submanifold of $\manifold$ of positive dimension.

Under those circumstances, the Hessian $\hess \mfc(\optpoint)$ cannot be positive definite.
However, the Hessian may well be positive definite upon passing to the quotient.
Explicitly, let $\varphi \colon \manifold \to \quotient{\manifold}{\sim}$ be the canonical projection $\varphi(x) = [x]$.
Then, the symmetries of $\mfc$ ensure that there exists a function $\sfc \colon \quotient{\manifold}{\sim} \to \reals$ such that $\mfc = \sfc \circ \varphi$.
If the symmetries are the only reason why $\hess \mfc(\optpoint) \not\succ 0$, then $\hess \sfc([\optpoint])$ is positive definite~\citep[Ex.~9.46]{boumal2020introduction}.
This means that if we run optimization algorithms on the quotient manifold, then we can expect all the good convergence properties that normally come with a positive definite Hessian,
including quadratic local convergence for TR-\tcg{}.
This forms part of the motivation for studying optimization on quotient manifolds---see~\citep{absil2008optimization}
and~\citep[Ch.~9]{boumal2020introduction}.

Under those same circumstances though, it is typical to observe that TR-\tcg{} enjoys fast local convergence even if we disregard the symmetries and run the algorithm on $\mfc \colon \manifold \to \reals$ directly.
We can now understand this as follows.
If $\hess \sfc([\optpoint]) \succ \zeros$, then $\mfc$ satisfies~\eqref{eq:pl} around $\optpoint$ (see for example~\cite[\S1.2]{rebjock2023fast}).
Thus, Theorem~\ref{th:rtr-main-theorem} applies, guaranteeing superlinear local convergence for TR-\tcg{}.
The role of \tcg{} appears to be instrumental.

To go beyond the assumption $\hess \sfc([\bar x]) \succ \zeros$, note that when $\quotient{\manifold}{\sim}$ is a \emph{Riemannian}
quotient~\cite[\S3.6.2]{absil2008optimization} of $\manifold$, the function
$\mfc$ is $\plconstant$-\eqref{eq:pl} around a minimum $\bar x$ if and only if
$\sfc$ is $\plconstant$-\eqref{eq:pl} around $[\bar x]$.
This equivalence readily follows from the equalities $\mfc(x) = \sfc([x])$ and
$\|\grad \mfc(x)\| = \|\grad \sfc([x])\|$ for all $x \in \manifold$.
If $\quotient{\manifold}{\sim}$ is a quotient manifold with another Riemannian
metric, then the equivalence still holds but possibly with different \pl{}
constants.

\bibliographystyle{plainnat}
\bibliography{references}

\clearpage
\appendix

\section{Simple example with superlinear convergence}\label{sec:example}

Consider the cost function $\mfc \colon \reals^n \times \reals^n \to \reals$
defined as
\begin{align*}
  \mfc(x, y) = \frac{1}{2} \sum_{i = 1}^n \big(y_i - \sin(x_i)\big)^2 = \frac{1}{2}\|y - \sin(x)\|^2,
\end{align*}
where $\sin$ is applied entrywise.
The set of minimizers $\optimalset = \big\{(x, y) \in \reals^n \times
\reals^n : y = \sin(x)\big\}$ is an embedded submanifold of $\reals^n \times \reals^n$.
Standard computations yield the gradient
\begin{align*}
  \grad \mfc(x, y) = \Big(-\cos(x) \odot \big(y - \sin(x)\big), y - \sin(x)\Big).
\end{align*}
It is clear that $\|\grad \mfc(x, y)\|^2 \geq \|y - \sin(x)\|^2 = 2\mfc(x, y)$,
hence $\mfc$ is globally $1$-\eqref{eq:pl}.

We run TR-\tcg{} with $n = 100$ and a random initialization.
Figure~\ref{fig:exp} displays the norm of the gradient at the iterates.
The convergence appears superlinear.
\begin{figure}
  \centering
  \includegraphics[width=0.6\textwidth]{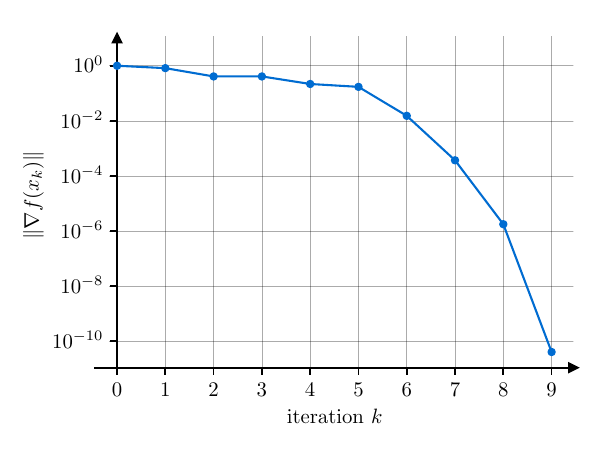}
  \caption{The gradient norm exhibits a pattern typical of superlinear
    convergence.}\label{fig:exp}
\end{figure}

We now briefly argue that the Hessian has a negative eigenvalue in the vicinity
of $\optimalset$.
Consider the case $n = 1$ for simplicity.
The Hessian is the $2 \times 2$ matrix
\begin{align*}
  \hess \mfc(x, y) =
  \begin{bmatrix}
    \sin(x)\big(y - \sin(x)\big) + \cos(x)^2 & -\cos(x)\\ -\cos(x) & 1
  \end{bmatrix}.
\end{align*}
There is a negative eigenvalue if the determinant $\sin(x)\big(y - \sin(x)\big)$
is negative.
Given $(\bar x, \bar y) \in \optimalset$, we can find points $(x, y)$
arbitrarily close to $(\bar x, \bar y)$ such that this quantity is negative.

\end{document}